\documentclass[11pt]{amsart}
\usepackage[utf8]{inputenc}
\usepackage[pdftex]{graphicx}
\usepackage{amssymb}
\usepackage{amsmath}
\usepackage{amsfonts}
\usepackage{amsthm}
\usepackage{bm}
\usepackage{mathrsfs}
\usepackage{appendix}
\usepackage{enumerate}
\usepackage[left=3cm, right=3cm, bottom=3.4cm]{geometry}
\usepackage[stretch=30,shrink=30]{microtype}
\usepackage{stmaryrd}
\usepackage[normalem]{ulem} 
\usepackage{mathtools}
\usepackage{todonotes}
\usepackage{pgf,tikz}
\usetikzlibrary{decorations.markings}
\usetikzlibrary{arrows}

\usepackage{xcolor}
\definecolor{antiquefuchsia}{rgb}{0.57, 0.36, 0.51}
\definecolor{azure}{rgb}{0.0, 0.5, 1.0}

\usepackage[colorlinks = true, linkcolor = black, citecolor = antiquefuchsia]{hyperref}


\makeatother

\numberwithin{equation}{section}

\newtheorem{theorem}{Theorem}[section]
\newtheorem{lemma}[theorem]{Lemma}
\newtheorem{proposition}[theorem]{Proposition}

\newtheorem{open}[theorem]{Problem}
\newtheorem{definition}[theorem]{Definition}
\theoremstyle{remark}


\newcommand{\N}{\mathbb{N}}
\newcommand{\Z}{\mathbb{Z}}
\newcommand{\R}{\mathbb{R}}

\newcommand{\T}{{\mathbf T}}


\DeclareMathOperator{\Per}{Per}

\newcommand{\HH}{{\mathcal H}}

\def\Z{\ensuremath\mathbb{Z}}

\usepackage[capitalise,nameinlink]{cleveref}
\crefformat{equation}{#2(#1)#3}
\crefrangeformat{equation}{#3(#1)#4 to~#5(#2)#6}
\crefmultiformat{equation}{#2(#1)#3}{ and~#2(#1)#3}{, #2(#1)#3}{ and~#2(#1)#3}

\newcommand{\abs}[1]{\left\lvert #1 \right\rvert}
\newcommand{\ENCLOSE}[1]{\left\{ #1 \right\}}
\newcommand{\Enclose}[1]{\left[ #1 \right]}
\newcommand{\enclose}[1]{\left( #1 \right)}

\title[]{Periodic double tilings of the plane}
\author[]{Francesco Nobili} 
\address{Universit\'a di Pisa, Dipartimento di Matematica, Largo Bruno Pontecorvo 5,
56127 Pisa, Italy}
\email{\url{francesco.nobili@dm.unipi.it}}
\author[]{Matteo Novaga} 
\address{Universit\'a di Pisa, Dipartimento di Matematica, Largo Bruno Pontecorvo 5,
56127 Pisa, Italy}
\email{\url{matteo.novaga@unipi.it}}
\author[]{Emanuele Paolini} 
\address{Universit\'a di Pisa, Dipartimento di Matematica, Largo Bruno Pontecorvo 5,
56127 Pisa, Italy}
\email{\url{emanuele.paolini@unipi.it}}

\date{\today. \\
MSC(2020): 49Q05, 52C20  (primary),  58E12 (secondary). \\ 
\emph{Keywords}: Tilings, Kelvin problem, Isoperimetric profile.}

\begin{document}
\begin{abstract}
We study tilings of the plane composed of two repeating tiles of different assigned areas relative to an arbitrary periodic lattice.
We classify isoperimetric configurations (i.e., configurations with minimal length 
of the interfaces) both in the case of a fixed lattice or for an arbitrary periodic lattice.
We find three different configurations depending on the ratio between the assigned areas 
of the two tiles and compute the isoperimetric profile.
The three different configurations are composed of tiles with a different number of 
circular edges, moreover, different configurations exhibit a different optimal lattice.
Finally, we raise some open problems related to our investigation.
\end{abstract}

\maketitle
\tableofcontents

\section{Introduction} 
The Kelvin problem (\cite{Kelvin1887}) consists in finding a partition of the Euclidean space $\R^d$, for $d\ge 2$, into cells of equal volume so that the area of the surfaces separating them is as small as possible. In the planar case, the natural candidate minimizer is the Honeycomb tiling, and this problem goes under the name of \emph{Honeycomb conjecture}. After preliminary results, a complete solution to this conjecture was achieved by T. C. Hales in \cite{Hales01} (see also \cite{Fejes1,Fejer2,Fejer3,Fejer4,MorganChristopherGreenleaf1988,Morgan1999}). In higher dimensions, little is currently known. For instance, in three dimensions a possible candidate minimizer was proposed by Lord Kelvin, the so-called Kelvin foam. However, an example with lower (average) surface area was later found in \cite{WeairePhelan1996}.

A natural variant of this problem is the study of periodic partitions of the Euclidean space minimizing the surface area of the interfaces but with possibly \emph{unequal} cells. Configurations of this kind have been for instance commented in \cite{Morgan1999} and numerically tested in \cite{FortesTeixeira01,FortesGranerTeixeira02}.
In this case, and differently from the equal-cells scenario, optimal tilings are expected to show a more complicated structure. In particular, guessing minimal configurations as well as rigorously proving that, indeed, these guesses are minimizers is a challenging yet intriguing theoretical question. For instance, an expected feature is the presence of phase transitions as the mutual size of the cells varies. Also, we observe that considering unequal cells is also natural from the viewpoint of the stability of the Honeycomb tiling, namely understanding the shape of planar partitions allowing the cells to have almost all the same size (see, e.g.\ \cite{CarrocciaMaggi16}).

In \cite{NobiliNovaga24}, the first and second named author have 
considered isoperimetric tilings composed of unequal cells, and obtained general
existence and regularity properties. 
The goal of this work is to classify all periodic tilings of the plane 
composed of two tiles with different assigned areas and with minimal perimeter 
both relative to a fixed periodic lattice (Theorem~\ref{th:main 1}) and 
for a variable periodic lattice (Theorem~\ref{thm:main 2}).
We find three different configurations, depending on the relative area of the two tiles
(see Figure~\ref{fig:tilings}). 
Finally, we compute the isoperimetric profile for the optimal lattice (see Figure~\ref{fig:profile}).
\subsection{Statement of the main results} Let us start by introducing the relevant notation. Consider a $d$-dimensional group of translations $G$ in $\R^d$, i.e., a \emph{lattice}. 
A fundamental domain relative to $G$ is a measurable set $D\subset\R^d$ such that 
\[
    \abs{\R^d \setminus \bigcup_{g\in G} (D+g)}=0\qquad
    \abs{(D+g)\cap D}=0,\quad  \forall g\in G\setminus\ENCLOSE{0}
\]
One can easily show that the measure of a fundamental domain only depends
on the group $G$: we will call it the \emph{volume} of the lattice $G$. Elements $g_1,\dots, g_M$ of $G$ are called \emph{generators}
of the lattice $G$ if they are independent vectors of $\R^d$ and every $g\in G$ can be written 
as $g=\sum_{k=1}^M c_k g_k$ with $c_k\in \Z$. Notice that 
the parallelepiped $D=\sum_{k=1}^d c_k g_k \colon c_k\in [0,1]$ is a fundamental domain of $D$ with measure $\abs{\det(g_1,\dots,g_d)}$. This formalism has been effectively employed for the study of isoperimetric problems starting from \cite{Choe89} and subsequently in the works \cite{MartelliNovagaPludaRiolo2017,CesaroniNovaga22,NovagaPaoliniStepanovTortorelli22,NovagaPaoliniStepanovTortorelli23,CesaroniNovaga23_1,CesaroniFragalaNovaga23,NobiliNovaga24} dealing with different types of local and non-local perimeter functionals satisfying suitable group invariance properties.

\begin{figure}
\includegraphics[height=3cm]{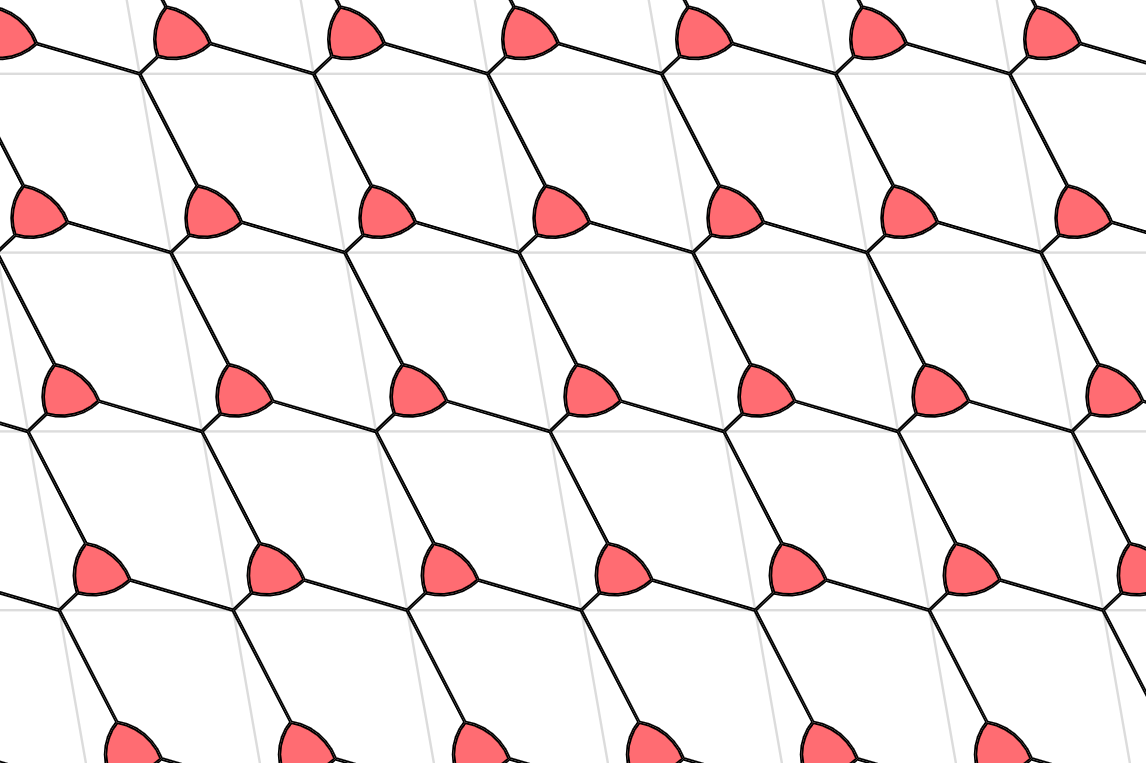}
\hfill
\includegraphics[height=3cm]{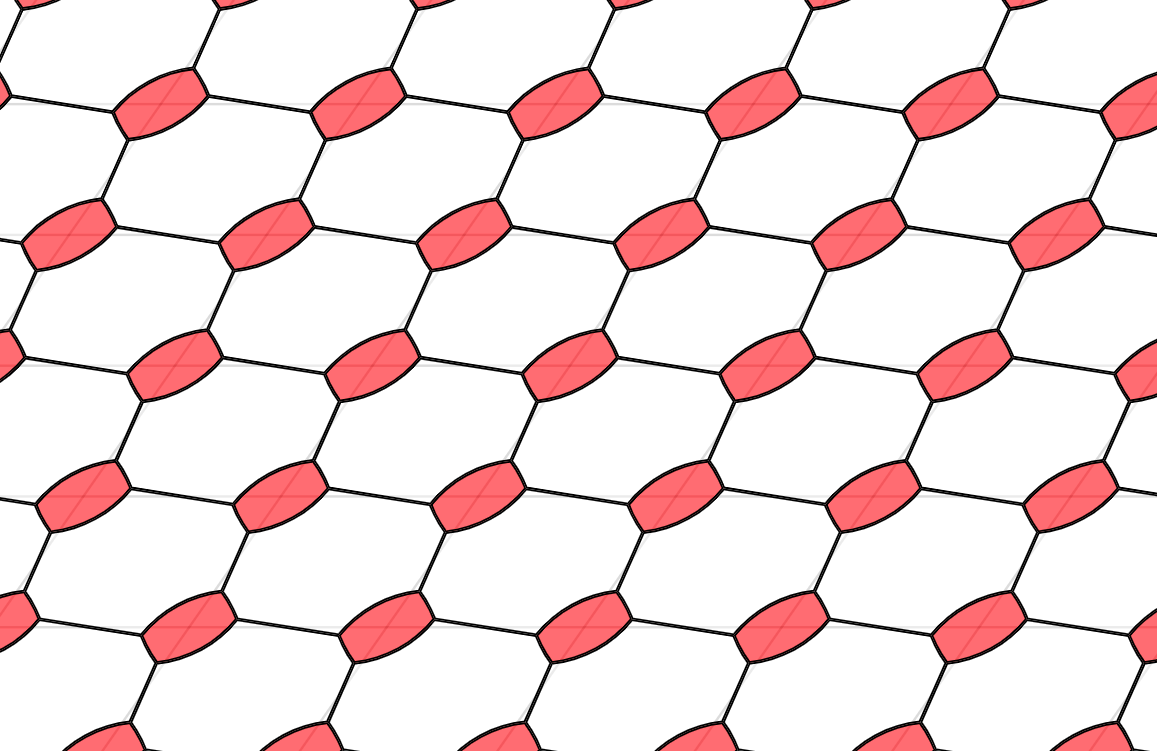}
\hfill
\includegraphics[height=3cm]{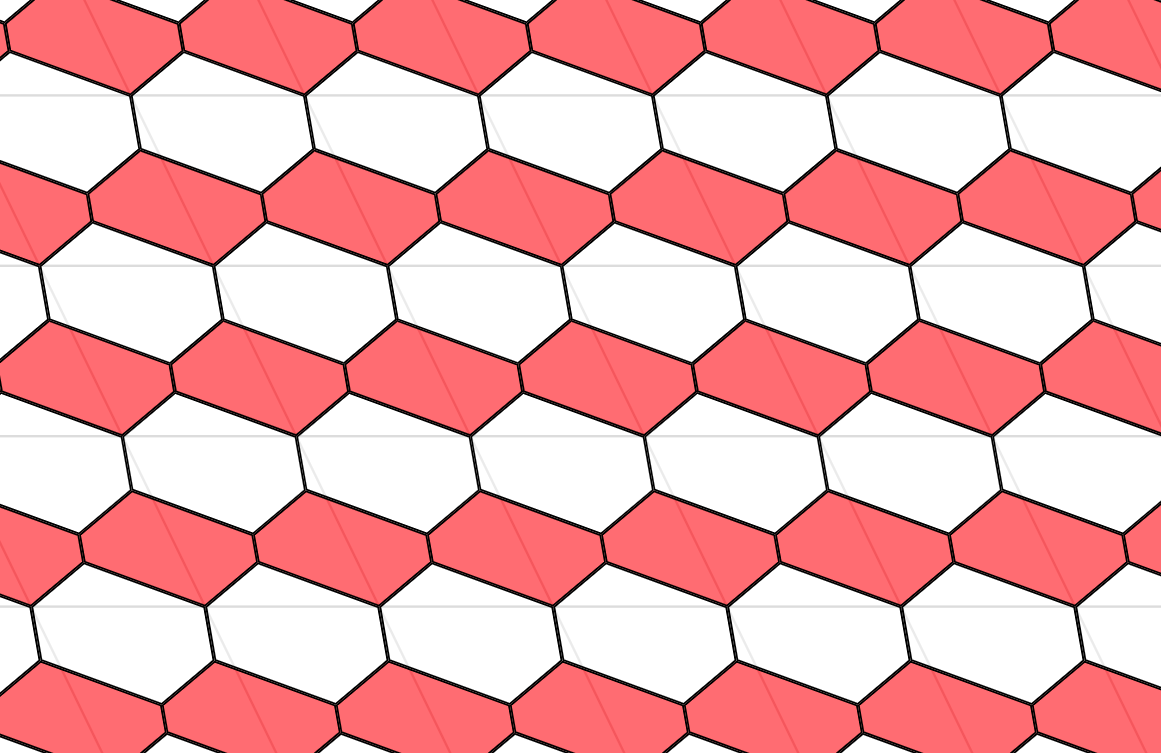}
\caption{The three possible configurations of isoperimetric $2$-tilings of the plane relative to a given periodic lattice.
See the interactive web page: \href{https://paolini.github.io/double-tiling/}{https://paolini.github.io/double-tiling/}.}
\label{fig:tilings}
\end{figure}

A \emph{tiling} in $\R^d$ is a collection $\T$ of measurable sets so that $\abs{E\cap F}=0$ for all $E,F\in \T$, $E\neq F$, and
$\abs {\R^d\setminus\bigcup_{E\in \T} E}=0$. Given $N\in\N$, a periodic $N$-tiling is a tiling $\mathbf T$ such that there is a lattice $G$ and there are $N$ measurable sets $E_1,...,E_N$ called \emph{generators} so that (up to negligible sets), we have
\[
\mathbf T = \{ E_k + g : k=1,...,N, g \in G\}.
\]
We call $\mathbf T$ non degenerate, provided $|E_k|>0$ for every $k=1,\dots,N$. We define the perimeter of a periodic tiling as
\[
\Per(\mathbf T) \coloneqq \frac 12\sum_{k=1}^N\Per(E_k),
\]
where $\Per(E)$ is the classical Caccioppoli perimeter of a measurable set 
(see \cite{AmbrosioFuscoPallarabook,Maggi12_Book}).
Notice that $\Per(\mathbf T)$ corresponds to the $d-1$ dimensional measure of all the mutual interfaces between the generators. Given a lattice $G$,
a periodic $N$-tiling 
$\mathbf T = \{E_k+g\colon k=1,\dots,N, g\in G\}$
is said to be \emph{isoperimetric} relative to $G$, provided that for any other periodic $N$-tiling 
$\mathbf T'=\{E'_k+g\colon k=1,\dots,N, g\in G\}$ 
with $\abs{E_k}=\abs{E_k'}$, $k=1,\dots N$ one has
\[
\Per(\mathbf T) \le \Per(\mathbf T').
\]
Finally, a periodic $N$-tiling $\mathbf T$ will be simply called isoperimetric, provided that  $\Per(\mathbf T)\le \Per(\mathbf T')$ for all periodic $N$-tiling $\mathbf T'$ with $\abs{E_k}=\abs{E_k'}$ without any restriction on the lattice. From now on, we will restrict our investigation to planar periodic $2$-tilings:
$d=2$, $N=2$.

The first main result is the following classification theorem for isoperimetric tilings of the plane relative to a lattice $G$ with unit volume (i.e., fundamental domains of $G$ have area $1$).
\begin{theorem}[classification of planar isoperimetric $2$-tilings]
    \label{th:main 1}
Let $\T$ be a non degenerate, periodic $2$-tiling of $\R^2$ which is isoperimetric relative to a fixed lattice $G$. If $E_1,E_2$ are two generators of the tiling, 
then $E_1$ and $E_2$ are curvilinear polygons with angles of 120 degrees,
there exists $p\ge 0$ (the pressure) such that, up to exchanging $E_1$ with $E_2$, 
we have one of the following configurations (see Figure~\ref{fig:configurations}):
\begin{itemize}
    \item[$(3;9)$] $p>0$, $E_1$ is a Reauleaux triangle with three arcs of curvature $p>0$, $E_2$ is a curvilinear ennagon which is a \emph{chipped hexagon} with six flat edges and three 
    concave edges with curvature $-p$;
    \item[$(4;8)$] $p>0$, $E_1$ is a strictly convex curvilinear quadrangle with the symmetries of a rectangle, the four arcs have curvature $p$,
    $E_2$ is a curvilinear octagon which is a \emph{chipped parallelogram} with 
    four flat sides and four concave edges with curvature $-p$;
    \item[$(6;6)$] $p=0$, $E_1$ and $E_2$ are two hexagons with 
    flat edges.
\end{itemize}
\end{theorem}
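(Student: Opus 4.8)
\emph{Overview.} The plan is to pass to the quotient torus $\R^2/G$, use the regularity theory together with an Euler–characteristic count to reduce to a short list of combinatorial types, and then settle the classification by a finite perimeter optimization carried out within each type.

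\emph{Step 1: regularity and pressure.} By the existence and regularity results of \cite{NobiliNovaga24} (and the classical structure theory of planar perimeter–minimizing clusters), the interfaces of $\T$ are, modulo $G$, a finite union of circular arcs and line segments which meet only at triple points forming three equal $120^{\circ}$ angles; in particular $E_1$ and $E_2$ are curvilinear polygons, and — since a cell carrying a hole, a handle, or a component wrapping around the torus can be replaced by a strictly shorter competitor — we may assume $E_1$ and $E_2$ descend to topological disks of $\R^2/G$. Writing the first variation of the perimeter under the two area constraints produces Lagrange multipliers $\lambda_1,\lambda_2$; relabelling so that $\lambda_1\ge\lambda_2$ and setting $p:=\lambda_1-\lambda_2\ge 0$, every $E_1$–$E_2$ interface is an arc of curvature $p$ bulging into $E_2$, while every $E_1$–$E_1$ or $E_2$–$E_2$ interface is a segment. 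If $p=0$ all interfaces are segments, and a flat curvilinear polygon all of whose interior angles equal $120^{\circ}$ is a hexagon; hence $E_1$ and $E_2$ are hexagons and we are in case $(6;6)$. From now on assume $p>0$.

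\emph{Step 2: Euler characteristic and the candidate types.} The triple–junction structure makes the tiling a CW decomposition of the torus with exactly two $2$–cells and only trivalent vertices, so from $3V=2E$ and $V-E+F=0$ with $F=2$ one gets $V=4$, $E=6$, and therefore $n_1+n_2=12$, where $n_i$ is the number of edges of the polygon $E_i$. Since $p>0$ the cells must touch, so $E_1$ has at least one arc, which bulges away from $E_1$; together with the condition that every vertex angle is $120^{\circ}<180^{\circ}$ this makes $E_1$ convex, and its total boundary turning equals $2\pi$: each vertex contributes an exterior angle $60^{\circ}$ and each arc a strictly positive amount, so $60^{\circ}\,n_1<360^{\circ}$, i.e.\ $n_1\le 5$, while $n_1=1$ is impossible. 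Combined with $n_1+n_2=12$ this leaves precisely the types $(2;10)$, $(3;9)$, $(4;8)$, $(5;7)$, each still carrying finitely many adjacency patterns, recorded by the triple $(a,b,c)$ of numbers of $E_1$–$E_1$, $E_1$–$E_2$, $E_2$–$E_2$ edges subject to $a+b+c=6$ and $2a+b=n_1$.

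\emph{Step 3: shapes and elimination of the remaining types.} Within a fixed type one expresses the perimeter through the common arc radius $1/p$ and the chord data of the two polygons, subject to the two area constraints and to the closing–up constraints that make the configuration a $G$–periodic tiling, and minimizes. A symmetrization argument — reflecting a candidate across its natural axes strictly decreases the total length unless the configuration is already symmetric — forces, in the type $(3;9)$, the cell $E_1$ to be a Reuleaux triangle (three $60^{\circ}$ arcs of curvature $p$) and $E_2$ the complementary ``chipped hexagon''; in the type $(4;8)$, $E_1$ to be a curvilinear quadrilateral with the reflection symmetries of a rectangle and $E_2$ the complementary ``chipped parallelogram''; in each case the concave arcs of $E_2$ have curvature $-p$, and the minimizer has $a=0$. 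It then remains to discard the types $(2;10)$ and $(5;7)$, together with all adjacency patterns with $a\ge 1$ — and this is the main obstacle. In each of these cases the turning bookkeeping already closes up (e.g.\ a convex curvilinear pentagon with five arcs of curvature $p$, matched by a heptagon with five concave arcs and two flat edges), so they cannot be discarded by an angle count; one must instead compute the minimal perimeter of each such type or pattern as a function of the area ratio $\abs{E_1}/\abs{E_2}$ and verify that it is strictly larger than the least of the perimeters of $(3;9)$, $(4;8)$ and $(6;6)$ for every ratio, or else exhibit a global obstruction to realizing it as a $G$–periodic tiling. Together with the reduction to disk–like cells invoked in Step 1, these explicit comparisons are where the real work lies; once they are in place, the only isoperimetric configurations left are exactly the three listed.
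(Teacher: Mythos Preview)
Your outline has two genuine gaps, and both touch the heart of the argument.

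\textbf{Connectedness is not free.} In Step~1 you assert that ``a cell carrying a hole, a handle, or a component wrapping around the torus can be replaced by a strictly shorter competitor'', and then work throughout with $F=2$ disk cells on the torus. Simple connectedness of each \emph{component} does follow from the regularity theory, but there is no cheap reason why $E_1$ or $E_2$ should be connected. The paper flags this explicitly as ``the main difficulty'': a priori the adjacency graph can be arbitrarily complex. And the issue is not academic: there is a perfectly stationary configuration, labelled $(3,3;12)$ in the paper, in which $E_1$ is the union of two Reuleaux triangles and $E_2$ is a chipped dodecagon. This is only eliminated at the very end, by a concavity argument (increase one triangle and shrink the other at fixed total area to strictly decrease perimeter). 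Your Euler count with $F=2$ simply misses this candidate.

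\textbf{The elimination of $(2;10)$, $(5;7)$ and the $a\ge 1$ patterns is not a bookkeeping afterthought.} You correctly identify it as ``the main obstacle'' but then propose to resolve it by computing minimal perimeters in each type and comparing across all area ratios. That program is neither carried out nor obviously feasible. The paper takes a completely different, structural route that avoids any cross-type perimeter comparison. The key lemma you are missing is the vertex classification: at every triple point one has either three flat edges with three translates of the \emph{same} component of $E_2$ (type $2$--$2$--$2$), or one flat edge and two curved edges bounding a component of $E_1$ (type $1$--$2$--$2$). This uses two facts you do not invoke: (i) a flat edge of a component forces an opposite parallel flat edge (hence flat edges come in pairs), and (ii) a convex component has either all flat or all curved edges. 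Once the vertex types are in hand, every curved edge of a component $C$ of $E_2$ is sandwiched between two flat edges, so $\#\{\text{curved edges of }C\}\le\#\{\text{flat edges of }C\}$; together with the turning bound $n\ge 6$ for $C$ and an Euler-type degree bound showing $C$ has at most six flat edges (applied to the periodic adjacency graph of $C$ with its own translates), this forces $C$ to have exactly four or six flat edges. The case analysis then lands directly on $(4;8)$, $(3;9)$ and the disconnected $(3,3;12)$, with no $(2;10)$ or $(5;7)$ ever appearing.

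In short: your Euler-on-the-torus reduction is appealing but rests on an unjustified connectedness assumption, and your proposed endgame (perimeter comparison across types) is both unexecuted and unnecessary. The decisive missing ingredient is the vertex-type dichotomy, which simultaneously rules out $E_1$--$E_1$ interfaces (your $a\ge 1$ patterns), forces the alternation pattern on $\partial E_2$, and makes the classification fall out without any optimisation.
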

The proof of the above is a direct consequence of Theorem~\ref{th:equal_pressure}, dealing with preassure $p=0$ and Theorem ~\ref{th:different_pressures}, dealing with preassure $p>0$. In both results, we also comment on the uniqueness of isoperimetric configurations.

We next discuss our second main result. Given a lattice $G$ of $\R^2$ with unit volume we denote by $\mathcal I_G\colon [0,1] \to [0,+\infty)$ the isoperimetric \emph{profile function} relative to $G$ defined, for each $x \in [0,1]$, by:
\[
\mathcal I_G (x) \coloneqq \inf\left \{ \Per(\mathbf T) \colon \mathbf T = \{E_k + g \colon k=1,2, g \in G \}, |E_1|=x,|E_2|=1-x\right\}.
\]
Allowing the lattice $G$ to vary, we can introduce a further isoperimetric profile function setting
\[
\mathcal I (x) \coloneqq \inf \mathcal I_G(x) 
\]
where the infimum is taken among all lattices $G$ of $\R^2$ with unit volume. We are ready to state our second main results of this work (see also Figure \ref{fig:profile} for a picture).
\begin{theorem}\label{thm:main 2}
    There are explicit points $x_1,x_2 \in \left(0,\frac 12\right)$ (approx. $x_1 \approx 0.062,x_2\approx 0.317$, see Theorem \ref{thm:isoperimetric_profile}) so that:
    \[
        \mathcal I(x) = \begin{cases}
            \,\sqrt 2\sqrt{\pi -\sqrt 3}\sqrt{x}+\sqrt[4]{12},  &\text{if }x\in[0,x_1),\\
            \,2\sqrt{\frac{\pi}{3}+1-\sqrt 3}\sqrt x +2&\text{if }x\in[x_1,x_2),\\
            \,2\sqrt[4]{3} &\text{if }x\in(x_2,\frac 12],\\
            \mathcal I (1-x) &\text{if }x\in(\frac 12,1].
        \end{cases}
    \]
    In particular, we have that $[0,1]\ni x\mapsto \left(\mathcal I(x)-\mathcal I(0)\right)^2$ is concave.
\end{theorem}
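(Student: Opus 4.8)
The plan is to combine the classification in Theorem~\ref{th:main 1} with an explicit minimization over lattices inside each of the three families. For a fixed unit-volume lattice $G$ the profile $\mathcal I_G(x)$ is attained by an isoperimetric $2$-tiling (existence from \cite{NobiliNovaga24}), which by Theorem~\ref{th:main 1} is of type $(3;9)$, $(4;8)$ or $(6;6)$; hence $\mathcal I(x)=\inf_G\mathcal I_G(x)=\min\{f_{(3;9)}(x),f_{(4;8)}(x),f_{(6;6)}(x)\}$, where $f_{(\cdot)}(x)$ is the infimum of $\Per(\mathbf T)$ among tilings of the corresponding type with $|E_1|=x$, $|E_2|=1-x$ and arbitrary unit-volume lattice (set to $+\infty$ when none exists). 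The task thus reduces to computing the three functions and taking their lower envelope.

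For type $(3;9)$: $E_1$ is a Reuleaux triangle of width $w$ and $E_2$ is a $120^\circ$-hexagon $H$ with its three alternate corners chipped by arcs of radius $w$, the lattice being the translation lattice making $H$ tile. Barbier's theorem and elementary area/length bookkeeping give $\Per(E_1)=\pi w$, $|E_1|=\tfrac{\pi-\sqrt 3}{2}w^2$ (so $w$ is fixed by $x$), $|H|=1$, and $\Per(E_2)=\Per(H)+(\pi-2\sqrt3)w$, whence $\Per(\mathbf T)=\tfrac12\Per(H)+(\pi-\sqrt3)w$. Since the regular hexagon minimizes perimeter among all hexagons of area $1$ and tiles, minimizing over $H$ gives $f_{(3;9)}(x)=\sqrt[4]{12}+\sqrt{2(\pi-\sqrt3)\,x}$ on its feasibility interval. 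Type $(4;8)$ is analogous with $E_2$ a parallelogram $P$ of area $1$, angle $\theta\in(60^\circ,120^\circ)$, chipped at all four corners, and $E_1$ a convex curvilinear quadrilateral bounded by four arcs of radius $\rho$: one obtains $\Per(\mathbf T)=\tfrac12\Per(P)+\tfrac{2\pi}{3}\rho-2(t_\theta+t_{\pi-\theta})$ with $t_\theta,t_{\pi-\theta}$ and the four chip areas explicit in $(\theta,\rho)$. Optimizing first over the side lengths of $P$ (forcing $P$ equilateral, hence a square when $\theta=\tfrac\pi2$) and then over $\theta$ — the map $\theta\mapsto\Per(\mathbf T)$ is symmetric under $\theta\leftrightarrow\pi-\theta$, so $\theta=\tfrac\pi2$ is critical, and a second-order check shows it is the minimum — yields the unit square, four arcs of angle $\tfrac\pi6$, and $f_{(4;8)}(x)=2+2\sqrt{(\tfrac\pi3+1-\sqrt3)\,x}$.

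For type $(6;6)$ I would exhibit the ``brick-row'' tiling: $E_i$ a $120^\circ$-hexagon with two vertical edges of height $h_i$ and four slanted edges of common length $u$, stacked in alternating horizontal rows; then $|E_i|=\sqrt3\,u\,h_i+\tfrac{\sqrt3}{2}u^2$, $\Per(E_i)=2h_i+4u$, the lattice has covolume $1$ iff $\sqrt3\,u(h_1+h_2)+\sqrt3\,u^2=1$, and hence $\Per(\mathbf T)=h_1+h_2+4u=\tfrac1{\sqrt3\,u}+3u$. This is independent of $x$ (which depends only on $h_1$) and minimized at $u=3^{-3/4}$ with value $2\sqrt[4]3$; since $h_1,h_2>0$ forces $x\in[\tfrac16,\tfrac56]$, this gives $f_{(6;6)}(x)\le2\sqrt[4]3$ there. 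Granting the matching lower bound $f_{(6;6)}(x)\ge2\sqrt[4]3$ (and optimality of the regular hexagon and the square in the other families), the pieces assemble as follows: $f_{(3;9)},f_{(4;8)}$ are concave increasing of the form $c\sqrt x+d$ with $d_{(3;9)}=\sqrt[4]{12}<2=d_{(4;8)}$ but $c_{(3;9)}>c_{(4;8)}$ (elementary, since $2(\pi-\sqrt3)>4(\tfrac\pi3+1-\sqrt3)$), while $f_{(6;6)}\equiv2\sqrt[4]3$; comparing, $f_{(3;9)}<f_{(4;8)}$ precisely on $[0,x_1)$ and $f_{(4;8)}<2\sqrt[4]3$ precisely on $[x_1,x_2)$, where $x_1,x_2$ are the unique crossing points and the feasibility upper limits lie beyond them, so on $(x_2,\tfrac12]$ the minimum equals $2\sqrt[4]3$; with the symmetry $E_1\leftrightarrow E_2$ this gives the stated formula for $\mathcal I$. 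Finally $(\mathcal I(x)-\mathcal I(0))^2$ (with $\mathcal I(0)=\sqrt[4]{12}$) equals $2(\pi-\sqrt3)x$ (affine) on $[0,x_1)$, equals $(c_{(4;8)}\sqrt x+(2-\sqrt[4]{12}))^2$, i.e. $(c\sqrt x+b)^2$ with $b,c>0$ and second derivative $-\tfrac12 bc\,x^{-3/2}<0$, on $[x_1,x_2)$, and is constant on $(x_2,\tfrac12]$; at each break-point $x_1,x_2,\tfrac12$ the one-sided derivatives are non-increasing (at $x_1$ this reduces, via the continuity relation defining $x_1$, to exactly $c_{(3;9)}\ge c_{(4;8)}$), so the function is concave on $[0,\tfrac12]$, hence on $[0,1]$.

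The main obstacle is the lower bound $f_{(6;6)}(x)\ge2\sqrt[4]3$, equivalently the optimality of the brick-row construction. In the $(3;9)$ and $(4;8)$ families optimality of the regular hexagon and of the square comes cheaply from the ordinary isoperimetric inequality for polygons, but in the $(6;6)$ case the naive estimate $\tfrac12(\Per E_1+\Per E_2)\ge\sqrt{2\sqrt3}\,(\sqrt x+\sqrt{1-x})$ is strictly below $2\sqrt[4]3$ for $x\ne\tfrac12$ — two regular hexagons of unequal area do not tile — so one must work within the full moduli of hexagonal $2$-tilings and exploit the edge-matching and closing conditions. Concretely this is a finite-dimensional constrained minimization (the lattice together with the positions of the four vertices in a fundamental domain, over the few admissible combinatorial types), whose Lagrange system one expects to single out the brick-row family; carrying this out is the technical heart, while the remainder is explicit plane geometry and one-variable calculus.
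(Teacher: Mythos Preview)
Your overall strategy matches the paper's: use Theorem~\ref{th:main 1} to reduce to the three families, minimize over lattices within each, and take the lower envelope. The $(3;9)$ computation and the concavity argument are correct and essentially as in the paper.

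On the $(6;6)$ lower bound you overestimate the difficulty. The classification result you are invoking (Theorem~\ref{th:main 1}, proved for this case as Theorem~\ref{th:equal_pressure}) yields more than ``two hexagons'': its proof pins down a \emph{single} combinatorial type, with $E_1,E_2$ adjacent along two pairs of opposite edges in a strip pattern, and moreover asserts uniqueness of the configuration given $G,|E_1|,|E_2|$. So there is no enumeration of ``admissible combinatorial types'' to carry out. Writing the three side-lengths of $E_1$ as $a,b,s$ and of $E_2$ as $a,b,t$ (the shared edges having lengths $a,b$), one has $\Per(\mathbf T)=s+t+2a+2b$ and $|E_i|=\tfrac{\sqrt3}{2}(ab+ \cdot\, a+\cdot\, b)$; a four-variable Lagrange-multiplier computation immediately gives $a=b$, $s+t=a+b$, hence $\Per(\mathbf T)=6a=2\sqrt[4]{3}$. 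This is exactly your brick-row family, and the lower bound comes for free.

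On $(4;8)$, your rhombus step (equal side lengths) coincides with the paper's first reduction. The passage from rhombus to square, however, is more delicate than a symmetry-plus-second-derivative check: the parallelogram angle is \emph{not} a free parameter but is determined by the aspect ratio of the curvilinear rectangle $E_1$ through the $120^\circ$ vertex condition, and the constraint $|E_1|=x$ couples $\rho$ to that aspect ratio in a way that makes the resulting one-variable function far from transparently convex. The paper sidesteps this by a direct comparison: given any non-square $(4;8)$ configuration with vertices of $E_1$ at $(\pm a,\pm b)$, it builds a competing square configuration with the same $|E_1|,|D|$ and shows it has strictly smaller perimeter, using that the level sets of the curvature radius $r(a,b)=2\sqrt{a^2+\sqrt3\,ab+b^2}$ are ellipses with axes along $y=\pm x$, and comparing distances from such an ellipse to the hyperbola $ab=\text{const}$. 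Your second-order route may well succeed, but it is not the routine check you describe, and you should expect this to be the genuine technical core rather than the $(6;6)$ case.
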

The key content of the above result is that both the values of the points $x_1,x_2$ as well as the expression of $\mathcal I(\cdot)$ in the three successive sub-intervals are explicit. In fact, Theorem \ref{thm:main 2} is a direct consequence of Theorem \ref{thm:isoperimetric_profile} where a complete description of minimizers is achieved on each interval $(0,x_1),(x_1,x_2),\left(x_2,\frac 12\right)$, see Figure \ref{fig:profile}. The value $\mathcal I(\cdot)$ is then precisely the perimeter-cost associated to each of the three configurations. Notice that at the extreme point $x=0$, the limiting isoperimetric periodic $1$-tiling minimizing $\mathcal I(0)$ is the honeycomb tiling, as expected. Similarly, at $x=\frac 12$, the isoperimetric periodic $2$-tiling is the honeycomb partition made of hexagons with area $\frac 12$.
\begin{figure}
\begin{tikzpicture}[scale=1.6]
    \node at (0,0) {\includegraphics[height=8cm]{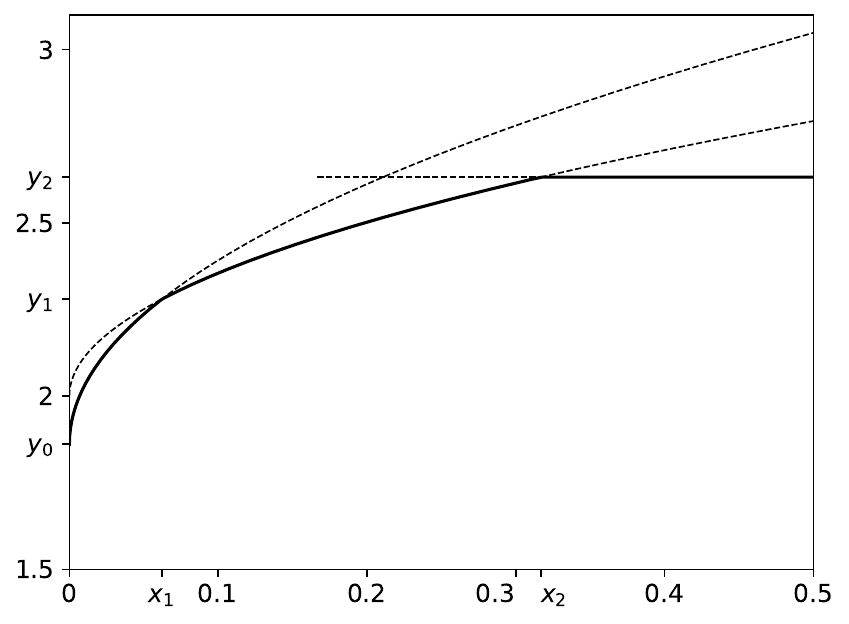}};
    \node at (-2.6,-1.4) {\includegraphics[height=1.6cm]{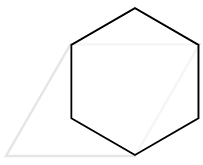}};
    \node at (-2,-0.4) {\includegraphics[height=1.6cm]{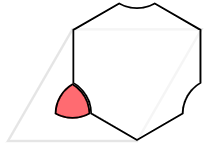}};
    \node at (-1,-0.4) {\includegraphics[height=1.6cm]{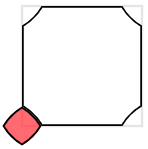}};
    \node at (0.7,0.5) {\includegraphics[height=1.6cm]{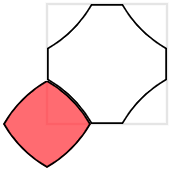}};
    \node at (0.7,-0.5) {\includegraphics[height=1.6cm]{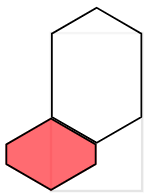}};
    \node at (2.5,0.5) {\includegraphics[height=1.6cm]{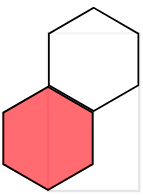}};
\end{tikzpicture}
    \caption{Plot of the isoperimetric profile $\mathcal I(x)$ where $x\in[0,\frac 12]$ is the area of the smaller tile and $1-x$ is the area of the larger tile.
    On the $y$-axis, we have $y_0 \coloneqq  \mathcal I(0) = \sqrt[4]{12}$, and $y_i \coloneqq \mathcal I(x_i)$ for $i=1,2$ (see Theorem~\ref{thm:main 2} and Theorem~\ref{thm:isoperimetric_profile}).
    The dashed lines represent stationary but not optimal tilings.
    The shape of the optimal tiles is depicted for the values $x=0$ (single hexagon), 
    $x=x_1$ (configurations $(3;9)$ and $(4;8)$, $x=x_2$ (configurations $(4;8)$ 
    and $(6;6)$)
    and $x=0.5$ (configuration $(6;6)$). 
    The transition between the three different configurations 
    occurs at $x=x_1$ and $x=x_2$. These are the only values where we have two different
    isoperimetric tilings with the same areas.}
    \label{fig:profile}
\end{figure}
Finally, we point out that it is also implied by the work \cite{NobiliNovaga24} of the first and second named authors that two adjacent hexagons with almost the same area are isoperimetric $2$-tilings, see \cite[Theorem 5.3]{NobiliNovaga24}. However, this was achieved by a \emph{qualitative} argument and the explicit value of the transition point $x_2$ could not be deduced (even though more than $2$-cells are allowed).
\subsection{About the proof scheme}
Our main results are deduced by the combination of theoretical existence and regularity results involving the theory of sets of finite perimeter, planar classification arguments, and ad-hoc trigonometric computations. 

The regularity properties of isoperimetric tiles are the same as those of planar bubbles
as stated in \cite{NobiliNovaga24} (see Theorem~\ref{th:regularity} below).
In a nutshell, the relevant regularity information is that the interfaces of an isoperimetric periodic tiling are either straight line segments or circular arcs meeting at triple points with equal angles of 120 degrees. 
Moreover, circular arcs have equal curvature up to a sign convention. 

The main difficulty is now given by the fact 
that it is not a priori known if the single tiles of an isoperimetric tiling are connected.
In particular we have to deal with an adjacency graph which, in principle, can be arbitrarily complex.
Starting from the regularity properties, 
the key points to deduce Theorem~\ref{th:main 1}, 
are stated in Proposition~\ref{prop:planar-tilings}, where periodicity is crucial, and Proposition~\ref{prop:planar-double-tilings} which uses the assumption that we have 
only two different tiles.

Finally, having classified three possible isoperimetric configurations, we perform ad-hoc trigonometric computations to compute the associated perimeter-costs. The value of $\mathcal I(x)$ for $x \in \left(0,\frac 12\right)$ is then obtained by means of geometric observations building on top of the previous computations. 
This step of the strategy gives also the explicit values of $x_1,x_2$ and, ultimately, completes the proof of Theorem \ref{thm:main 2}.

\subsection{Comparison with related works and open problems}
Understanding the shape of isoperimetric configurations is a long-standing research program that intersects several areas of mathematics (see, e.g., \cite{Fusco15Survey} and references therein for a modern introduction). 
The theory of clusters is closely related to our investigation with respect to the methodology and the techniques (see \cite{Maggi12_Book,MorganBOOK}). 
A relevant problem is the so-called $k$-bubbles problem consisting of separating $k$ given volumes with the least interface area, in a $d$-dimensional space. 
If $k=1$, this problem reduces to the well-known isoperimetric problem. 
For $k\le d+1$, optimal configurations are conjectured to be standard $k$-bubbles (\cite{SullivanMorgan96}) and the relevant advances around this program are contained in \cite{FoisyAlfaroBrockHodges93} for planar $2$-bubbles (see also \cite{CicaleseLeonardiMaggi17,MorganWichiramala02,DorffLawlorSampsonWilson09}), 
 in \cite{HassHutchingsSchlafly95,HassSchlafly00,HutchingsMorganRitore02} for $2$-bubbles in $\R^3$, and \cite{Reichardt08} in $\R^d$. For the resolution with a higher number of bubbles, we refer to \cite{Wichiramala} (planar $3$-bubbles), to \cite{PaoliniTamagnini18,PaoliniTamagnini18bis,PaoliniTortorelli20} (planar $4$-bubbles) and to the authoritative recent works \cite{MilmanNeeman22,MilmanNeeman24}. 
 See also \cite{DeRosaTione23} for recent advances in the stationary case,
and \cite{VazCoxAlonso04} for bidispersed bubble clusters. 

We conclude this introduction by mentioning possible investigations as well as interesting open problems related to this work, see Section \ref{sec:open problems} for a detailed discussion. For instance, a natural question is to explicitly find the profile $\mathcal I_G$ for every lattice $G$ in $\R^2$ and understand when the intermediate stationary configuration $(4;8)$ is actually minimal (see Problem \ref{prob:G exists} and Problem \ref{prob:IG explicit}). A further interesting problem is what we call the Kelvin problem with unequal cells. It is to be understood if our isoperimetric tilings are local minimizer for the perimeter in the plane (see Definition \ref{def:constrained local minimizer}). 
A simple computation we carried out in~\eqref{eq:kelvin} reveals that, in some range of areas, the periodic isoperimetric tiling cannot be local minimizer for the perimeter. 
However, for some other values of the areas, this could be the case (see Figure \ref{fig:comparison}).

\section{Classification of planar isoperimetric tilings}
In this section we classify planar periodic $2$-tilings which are isoperimetric relative to a fixed lattice $G$ 
(see Theorems~\ref{th:equal_pressure} and~\ref{th:different_pressures}).
We start by collecting some regularity results.
\begin{theorem}[regularity]
    \label{th:regularity}%
Let $\T$ be a periodic $N$-tiling of $\R^2$ isoperimetric relative to a lattice $G$,
and let $E_1,\dots,E_N$ be a set of generators.
Without loss of generality, we can suppose that the regions $E\in \T$ are closed sets
coinciding with the closure of their interior, 
that the measure theoretic boundary coincides with the topological boundary
and that the perimeter is the length of the boundary. 
Also the indecomposable components of a region
(a measurable set $E$ is decomposable if $E=E_1\cup E_2$ with $\abs{E_1\cap E_2}=0$ and $P(E)=P(E_1)+P(E_2)$)
coincide with the topological connected components of its interior.

Moreover, the following properties hold.
\begin{enumerate}
    \item Each region $E\in \T$ is bounded, and it is the union of a finite number 
of components $C_1,\dots,C_n$ with connected interior.
Each component $C_k$ is a curved polygon in the sense that its boundary is composed by 
a finite number of arcs of circle or straight line segments (we call them \emph{edges}). 
\item The edges meet in triple points with equal angles of 120 degrees, 
the three signed curvatures of the edges joining in the triple point have zero sum.
\item It is possible to define the pressure vector $\vec p=(p_1,\dots,p_N)$
which is a vector of numbers $p_k\in \R$ such that the arc separating two different
regions $E_k+g$ and $E_j+h$, with $g,h\in G$, has curvature $\abs{p_k-p_j}$
and the region on the convex side is $E_j+h$ when $p_j\ge p_k$;
pressures are defined up to a constant, 
so we can suppose that the sum of all the pressures is zero.
\item The interface $\partial \T \coloneqq \displaystyle\bigcup_{E\in \T}\partial E$ is connected and every component of $E\in \T$ is simply connected.
\end{enumerate}
\end{theorem}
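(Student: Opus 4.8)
The plan is to reduce the statement to a perimeter--minimizing cluster on the flat torus and then to invoke the regularity theory for such clusters, which for planar bubbles is exactly the content of \cite{NobiliNovaga24}. First I would fix the quotient projection $\pi\colon\R^2\to\mathbb{T}_G$, $\mathbb{T}_G\coloneqq\R^2/G$, and note that a periodic $N$-tiling with lattice $G$ and generators $E_1,\dots,E_N$ is precisely the datum of a partition $\{F_1,\dots,F_N\}$ of $\mathbb{T}_G$ into sets of finite perimeter, $F_k\coloneqq\pi(E_k)$, with $|F_k|=|E_k|$ and $\sum_k|F_k|=\vol(G)$; under this correspondence $\Per(\T)=\tfrac12\sum_k P(F_k;\mathbb{T}_G)$ is the total $\mathcal{H}^1$-measure of the interface on $\mathbb{T}_G$, so that $\T$ being isoperimetric relative to $G$ means exactly that $\{F_k\}$ minimizes the cluster perimeter on the compact surface $\mathbb{T}_G$ among all partitions with the prescribed cell areas. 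The normalizations asserted at the start of the statement --- closed representatives equal to the closure of their interior, measure-theoretic boundary equal to topological boundary, perimeter equal to the length of the boundary, indecomposable components equal to the connected components of the interior --- are then the standard consequences of the density and structure theorems for minimizing clusters, for which I would cite \cite{Maggi12_Book,AmbrosioFuscoPallarabook} and the corresponding statements in \cite{NobiliNovaga24}.

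For items (1)--(2) I would use that a minimizing cluster is, locally, a $(\Lambda,r_0)$-minimizer of perimeter on $\mathbb{T}_G$, so by the planar regularity theory of Almgren and Taylor (see \cite{Maggi12_Book,MorganBOOK}) the interface $\partial\T$ is a $C^{1,\alpha}$ embedded curve away from a discrete set of singular points, at each of which three arcs meet with equal angles of $120^\circ$; moreover the $N$ area constraints force, through the first variation, every interface portion between two chambers to have constant curvature, hence to be a circular arc or a straight segment. Compactness of $\mathbb{T}_G$ together with the density and monotonicity estimates then yields finiteness in place of mere discreteness: finitely many singular points, edges and faces. Pulling this back by $\pi$ gives that each $E\in\T$ is a $G$-periodic union of finitely many bounded components, each of them a curved polygon. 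The $120^\circ$ relation is the first-order force balance at a junction; the vanishing of the sum of the three signed curvatures I would instead derive from the pressures of item (3).

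For item (3) I would run the Lagrange multiplier rule for the constrained minimization on $\mathbb{T}_G$: varying along volume-preserving vector fields produces numbers $\lambda_1,\dots,\lambda_N$, one per chamber class and determined up to a common additive constant; normalizing $\sum_k\lambda_k=0$ and setting $p_k\coloneqq\lambda_k$, the Euler--Lagrange equation reads that the interface between $E_k+g$ and $E_j+h$ has curvature $|p_k-p_j|$, with the higher-pressure chamber bulging into the other, which fixes the convexity convention. At a junction of three classes $i,j,k$ the three curvatures, coherently oriented, are $p_i-p_j$, $p_j-p_k$, $p_k-p_i$, and their sum is zero --- the remaining half of item (2).

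The hard part will be item (4): the connectedness of the interface $\partial\T$ and the simple-connectedness of each chamber component, with no a priori control on the complexity of the adjacency graph. Here I would transplant the topological arguments of \cite{NobiliNovaga24} for planar bubbles. If a component $C$ of some $E_k$ were not simply connected, each of its finitely many bounded complementary holes would be a union of other chambers, and a volume-preserving competitor that absorbs one such hole into $C$ while compensating the area change by a local modification elsewhere would strictly decrease the total interface length, contradicting minimality. If instead $\partial\T$ were a disjoint union of two positively-separated closed networks, one could translate one of them relative to the other until a tangency or a junction violating the $120^\circ$ condition appears, again contradicting the regularity just established. Both arguments are genuinely two-dimensional and exploit that $N$ is finite; I expect this step --- rather than the \emph{soft} regularity package underlying (1)--(3) --- to carry essentially all of the difficulty.
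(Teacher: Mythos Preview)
Your proposal is correct and follows essentially the paper's route: items (1)--(3) are imported from the planar cluster regularity package of \cite{NobiliNovaga24} (you rephrase this through the torus quotient, which is equivalent and in fact sidesteps the paper's small remark that the fixed-lattice case must be checked separately), and item (4) is handled by the same translation argument---slide one connected component of $\partial\T$ until it touches the rest, producing a minimizer that violates the $120^\circ$ structure.

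Two points are worth noting. First, your difficulty assessment is inverted: in the paper, (4) is a one-paragraph observation, while all the substance sits in the citation to \cite{NobiliNovaga24} for (1)--(3). Second, your separate competitor argument for simple connectedness of the components is unnecessary: once $\partial\T$ is connected, every component $C$ is automatically simply connected, since a hole in $C$ would force $\partial\T$ to split into the portion outside $C$ and the portion trapped inside the hole, with no path in $\partial\T$ joining them through the interior of $C$. The paper relies on this implication rather than building a volume-compensated competitor.
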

\begin{proof}
    Apart from property (4) these results are presented, with a different notation, 
    in \cite[Theorem 5.2]{NobiliNovaga24}.
    Indeed, properties (1) up to (3) have been derived for partitions obtained as minimizers 
    of the following variational problem:
    \[
     \inf_G \inf_D \inf \left\{ \frac12\sum_{k=1}^N\Per(E_k) \colon \begin{array}{ll}
        E_k\subseteq D\text{ Borel}, &|E_k|=v_k,\quad k=1,...,N\\
         |E_k\cap E_l|=0, &\big|D\setminus \cup_k E_k \big|=0,
    \end{array}  \right\},
   \]
   where the first infimum is taken among all lattices $G$ and the second infimum is taken 
   among all \emph{fundamental domains} $D$ for the action of $G$ and the third infimum is taken 
   among all Borel partitions $(E_i)$ of $D$ with assigned volumes $v_1,..,v_N$. 
   Evidently, $G,D,(E_k)$ minimizes the above if and only if 
   $\T \coloneqq \{ E_k + g \colon k=1,\dots,N, g \in G\}$ is an isoperimetric periodic 
   $N$-tiling of $\R^2$.

   Even though this was not stated, the regularity results proved in \cite[Theorem 5.2]{NobiliNovaga24} 
   actually extend to the variational problem where the lattice $G$ is fixed:
    \begin{equation}
     \inf_D \inf \left\{ \frac12\sum_{k=1}^N\Per(E_k) \colon \begin{array}{ll}
        E_k\subseteq D\text{ Borel}, &|E_k|=v_k,\quad k=1,...,N\\
         |E_k\cap E_l|=0, &\big|D\setminus \cup_k E_k \big|=0,
    \end{array}  \right\}.
   \label{eq:functional G}
   \end{equation}
   Indeed, \cite[Theorem 5.2]{NobiliNovaga24} relied on \cite[Lemma 4.2]{NobiliNovaga24} 
   (which clearly can be extended to minimizers of \eqref{eq:functional G}, 
   see \cite[Theorem 3.2]{NobiliNovaga24}) 
   and also on \cite[Proposition 5.1]{NobiliNovaga24}. 
   The latter can be readily checked to hold for minimizers of \eqref{eq:functional G} by observing, 
   as done for \cite[Proposition 5.1]{NobiliNovaga24}, 
   that a minimizer $D, (E_k)$ exists satisfying
    \[
     D \cap B_{r_G}(0) \neq \varnothing,\qquad \text{and}\qquad  {\rm diam}(D)\le 2r_G +C,
    \]
    where $C>0$ depends on $G,N$ (in fact, it depends on the perimeter of the parallelogram spanned 
    by a basis of $G$). 
    In particular, as before but being now $G$ fixed, we observe that $D,(E_k)$ is a minimizer 
    of \eqref{eq:functional G} if and only if 
    $\T\coloneqq \{ E_k + g\colon k=1,\dots,N, g\in G\}$ is a periodic 
    $N$-tiling isoperimetric relative to $G$.
    Hence, by the previous discussion, properties (1), (2), and~(3) hold true also for any 
    periodic $N$-tiling isoperimetric relative to $G$.

   Finally, to prove (4), just notice that if, by contradiction, 
   the interface $\partial \T$ were not connected, 
   one can take a connected component of $\partial \T$ and consider all the components contained in it and all their 
   translations by $G$. 
   We can then move these components with a translation obtaining a new tiling with the same 
   volumes and perimeter. At some point, the moving components will touch the rest of the interface 
   creating a new vertex which will violate property (2) and thus leading to a contradiction.
\end{proof}

From now on we will use the properties stated in Theorem~\ref{th:regularity},
in particular, we will always assume that the regions are closed sets,
without further reference. 

\begin{proposition}[properties of planar tilings]
\label{prop:planar-tilings}
Let $\T$ be a periodic $N$-tiling of $\R^2$ which is isoperimetric relative to a  lattice $G$.

If $E\in \T$ and $g\in G$ are such that $E$ and $E+g$ share a common edge, 
and if $C$ is the connected component of $E$ which has the edge 
in its boundary, then the component of $E+g$ which shares the edge is $C+g$.
\end{proposition}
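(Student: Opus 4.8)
The plan is to argue by contradiction using a cut-and-translate surgery, exploiting the fact that $\Per(\mathbf T)$ is insensitive to translating a whole $G$-orbit of connected components. Suppose $E$ and $E+g$ share a common edge $\gamma$, let $C$ be the component of $E$ containing $\gamma$ in its boundary, and suppose for contradiction that the component of $E+g$ adjacent along $\gamma$ is \emph{not} $C+g$; call it $C'+g$ for some component $C'$ of $E$ with $C'\neq C$. First I would observe that $\gamma$ separates a translate of $C$ from a translate of $C'$, both translates being components of $E$-regions, so by property (3) of Theorem~\ref{th:regularity} the pressure on both sides is $p_k$ (same $k$), forcing the curvature of $\gamma$ to be $0$: the shared edge is a straight segment. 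This already forces some rigidity, but the real contradiction comes from a global move.

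The key step is the following surgery. Consider the connected component $\Sigma$ of the interface $\partial\mathbf T$ (which is connected, by property (4)) — wait, here I must be more careful: property (4) says $\partial\mathbf T$ is connected, so instead the relevant object is the set $\mathcal C$ of all $G$-translates of the component $C$. I would take the union $U:=\bigcup_{h\in G}(C+h)$ and consider sliding $U$ by a small vector $v$, i.e.\ replacing each $C+h$ by $C+h+v$ while leaving every other component fixed. Because $C$ and $C+g$ are \emph{distinct} components (they are not glued to each other along $\gamma$ under our contradiction hypothesis, so $C$ can detach from its $\gamma$-neighbor $C'+g$), for small $v$ this produces a genuine competitor tiling: the translated copies of $C$ still tile the same total area, still have the same perimeter (translation is an isometry and the components $C+h$ move rigidly and independently of one another), and still satisfy the area constraints $|E_k|=v_k$. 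Hence the competitor is also isoperimetric relative to $G$.

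Now I would push $v$ until the boundary of the moving family $\bigcup_h (C+h+v)$ first touches the boundary of the stationary part of $\partial\mathbf T$ at a point that was not previously a triple point — this happens at some finite $v$ because the components are bounded and the configuration is locally finite. At that first contact, a new boundary point is created where either two edges cross transversally or an edge meets another edge tangentially; in the transversal case this is a point of the interface of an isoperimetric tiling at which property (2) (all vertices are triple points with $120^\circ$ angles) fails, a contradiction; in the tangential case one gets a cusp or a non-$120^\circ$ configuration, again contradicting regularity. Therefore the contradiction hypothesis is impossible, and the component of $E+g$ sharing $\gamma$ must be exactly $C+g$.

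The main obstacle is making the surgery rigorous, i.e.\ checking that sliding the full $G$-orbit of a single component $C$ by a small vector genuinely yields an admissible periodic $N$-tiling with the same volumes and the same perimeter, and that one can slide far enough to force an illegal tangency or crossing without, in the interim, $C+h$ re-attaching to $C$ in a way that keeps the configuration regular. One must verify that the overlaps that could occur are only between the moving orbit and the fixed part of the interface (not the moving orbit with itself, which is ruled out because the $C+h$ are pairwise at positive distance for small $v$ by the original disjointness), and one must track that the "first touching" creates a genuine regularity violation rather than harmlessly producing another valid $120^\circ$ vertex — this is the delicate point and is exactly the mechanism already used in the proof of property (4) in Theorem~\ref{th:regularity}, which I would cite and adapt.
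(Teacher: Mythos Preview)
Your sliding argument has a genuine gap. The mechanism borrowed from the proof of property~(4) in Theorem~\ref{th:regularity} works there because one is translating an entire connected component of the interface $\partial\T$, which by hypothesis is at positive distance from the rest of $\partial\T$; the translated piece can move freely for a while before first contact. Here the situation is different. The component $C$ of the tile $E$ is completely surrounded by other regions of the tiling; its boundary $\partial C$ is part of the (connected, by property~(4)) interface. If you translate the orbit $\bigcup_{h\in G}(C+h)$ by any nonzero vector $v$, the sets $C+h+v$ immediately overlap the neighbouring stationary regions on one side and leave gaps on the other, so the resulting family is not a partition of $\R^2$. There is no ``small $v$'' regime in which you get an admissible competitor; the sliding cannot even get started. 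Your closing paragraph correctly identifies this as the delicate point but does not resolve it, and it cannot be resolved along these lines.

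The paper's argument is much more direct and avoids sliding altogether. With your notation, suppose the edge $\gamma$ separates $C$ from $C'+g$ with $C'\neq C$ both components of the generator $E_k$. Replace the generator $E_k$ by
\[
E_k' \coloneqq (E_k\setminus C')\cup(C'+g).
\]
Because $g\in G$, the family $\{E_k'+h:h\in G\}$ covers exactly the same set as $\{E_k+h:h\in G\}$, with the same areas and pairwise disjointness; one obtains a legitimate periodic $N$-tiling $\T'$ relative to the same lattice $G$. But now $C$ and $C'+g$ both lie in the single tile $E_k'$, so $\gamma$ is interior to $E_k'$ and no longer contributes to the interface. Hence $\Per(\T')=\Per(\T)-\HH^1(\gamma)<\Per(\T)$, contradicting minimality. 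No limiting argument, no first-touching analysis, and no appeal to the $120^\circ$ regularity at a new vertex is needed. (Your preliminary observation that $\gamma$ must be a straight segment is correct but plays no role.)
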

\begin{proof}
Just notice that if the common edge is shared by the translations of two different connected components, we could remove the edge and decrease the perimeter. 
Formally let $E=E_k+h$, $C=C_k+h$ where $E_k$ is the common generator of $E$ and $E+g=E_k+h+g$ and 
$C_k$ is the corresponding connected component of $E_k$.
Suppose that $D_k$ is the connected component of $E_k$ such that $C=C_k+h$ and $D=D_k+h+g$ are separated by the common edge $e$. 
Then we could replace the generator $E_k$ with
\[
 E_k' = (E_k \setminus D_k)\cup(D_k+g)
\]
to obtain a new tiling $\T'$ such that $\Per(\T') = \Per(\T) - \HH^1(e)$.
\end{proof}
\begin{proposition}[properties of planar $2$-tilings]
\label{prop:planar-double-tilings}
Let $\T$ be a non degenerate periodic $2$-tiling of $\R^2$
isoperimetric relative to a lattice $G$.
Assume that the two pressures have zero sum: $p_1=-p_2 \ge 0$.
Let $E_1,E_2\in \T$ be two generators. 
Then the following hold.
\begin{enumerate}
\item[(i)] If $C$ is a connected component of a tile $E \in \T$
and $C$ has a flat edge $e$ then $C$ has another flat edge parallel to $e$ with 
opposite normal vector (we call it an \emph{opposite edge}).
\item[(ii)] If $C$ is a convex connected component of a tile $E \in \T$, 
the edges of $C$ are either all flat (zero curvature), 
or else all the edges are curved (have positive curvature).
\end{enumerate}
\end{proposition}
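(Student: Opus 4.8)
The plan is to deduce (i) from the fact that flatness pins down the pressure of the adjacent regions, and (ii) by using (i) to force $C$ into a strip of translates and then contradicting isoperimetricity with a perimeter-lowering competitor.

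\emph{Part (i).} A flat edge $e$ of $C$ has zero curvature, so by Theorem~\ref{th:regularity} the two regions adjacent along $e$ have equal pressure; since $p_1 = -p_2$, this means either $p_1 = p_2 = 0$, or these two regions are translates of one and the same generator. In the latter case, let $E$ be the tile containing $C$: then $e$ is shared by $E$ and a translate $E+g$ with $g \in G \setminus \ENCLOSE{0}$, so Proposition~\ref{prop:planar-tilings} yields that the component of $E+g$ meeting $e$ is $C+g$. Hence $e \subseteq \partial C \cap \partial(C+g)$, so $e-g$ is an edge of $C$, parallel to $e$ and flat (being a translate of $e$); and since the outward normal of $C+g$ along $e$ is opposite to that of $C$ along $e$, translating by $-g$ shows that the outward normal of $C$ along $e-g$ is opposite to the one along $e$. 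As $g \neq 0$, $e-g \neq e$, so $e-g$ is the desired opposite edge. In the remaining case $p_1 = p_2 = 0$, every edge of $\T$ is flat; since at a triple point no region can occupy two of the three sectors — that would force an interface to bound one region on both sides, and in particular $\partial C$ is a simple closed polygon — every interior angle of every component equals $120^\circ$, so each component is a convex polygon all of whose exterior angles equal $60^\circ$ and sum to $360^\circ$, hence a hexagon, whose edges occur in three pairs of parallel opposite edges.

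\emph{Part (ii).} One may assume $p > 0$, since for $p = 0$ every edge is flat and there is nothing to prove. Suppose $C$ is convex and has a curved edge. If $C$ were a component of the lower-pressure tile, then by Theorem~\ref{th:regularity} $C$ would lie on the concave side of that edge, making $\partial C$ locally concave and contradicting convexity; so $C$ is a component of the higher-pressure tile $E_1$, lies on the convex side of each of its curved edges (all of curvature $2p$), and those edges bulge away from $C$. It remains to exclude the possibility that $C$ also has a flat edge $e$. By part (i), $e - g$ is likewise an edge of $C$, where $C$ and $C+g$ share $e$; hence $C-g$ and $C$ share $e-g$, and iterating, $C$ belongs to the bi-infinite strip $S \coloneqq \bigcup_{n\in\Z}(C+ng) \subseteq \T$, with consecutive members glued along the flat edges $e+ng$. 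Thus $\partial S$ is made of outward-bulging curvature-$2p$ arcs meeting in reflex ($240^\circ$) corners at the endpoints of the $e+ng$, where components of $E_2$ are wedged in.

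The contradiction should then be produced by a competitor: a tiling with the same areas $|E_1|$, $|E_2|$ but strictly smaller perimeter, obtained by altering $\T$ only near $S$ — straightening the arcs of the members of $S$ that abut the edges $e+ng$, which strictly shortens $\partial\T$, while compensating the resulting periodic change of areas by a small controlled bending of the remaining interfaces, arranged (using the translational symmetry of $S$ and the $G$-periodicity of $\T$) so that $|E_1|$ and $|E_2|$ are preserved exactly and the length spent bending is strictly below the length saved. I expect the design of this area-preserving competitor, and the estimate that its perimeter strictly decreases, to be the main obstacle: a purely combinatorial argument from (i) cannot close the case, since the centrally symmetric component bounded by two opposite flat edges and two opposite curvature-$2p$ arcs is consistent with (i), so a genuine perimeter-lowering surgery seems unavoidable.
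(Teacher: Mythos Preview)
Your argument for part (i) is correct and matches the paper's proof closely.

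Part (ii), however, has a genuine gap: you do not construct the perimeter-lowering competitor, and you explicitly flag its design as ``the main obstacle.'' More importantly, your claim that ``a purely combinatorial argument from (i) cannot close the case'' is mistaken. The paper's proof of (ii) is entirely combinatorial and avoids any competitor construction. Here is the idea you are missing. Suppose $C$ is convex with a flat edge $e$ and a curved edge $f$. By (i) there is $g\in G$ with $e' = e+g$ an opposite flat edge of $C$. Walk along $\partial C$ from $e$ to $e'$ passing through $f$: the outer normal turns monotonically (convexity) through $180^\circ$, and each vertex contributes exactly $60^\circ$. Hence there are at most two vertices on this side, so $f$ is the \emph{only} edge between $e$ and $e'$; it is a single arc of aperture $60^\circ$ whose endpoints are $v$ and $v+g$. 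Now look at the component $D$ on the other side of $f$. At the vertex $v+g$ the three incident edges are $e'$, $f$, and (by $G$-periodicity of the configuration at $v$) the translate $f+g$; following $\partial D$ one reaches $v+2g$, then $v+3g$, and so on. Thus $\partial D$ never closes and $D$ is unbounded, contradicting the finiteness of $\abs{D}$.

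So the configuration you worried about (two opposite flat edges and two opposite curved arcs) is indeed consistent with (i) alone, but it is ruled out not by a surgery on $C$ but by the impossibility of the neighboring tile $D$: that tile would have to be an infinite periodic strip. Replace your competitor sketch with this turning-angle-plus-unbounded-neighbor argument and the proof is complete.
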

\begin{proof}
Let $C$ be a connected component of a tile and suppose that it has a flat edge $e$.
This means that the other component sharing the edge $e$ has the same pressure: it can either be a translation of the same component, or else it is a component of the translation of a different generating tile which has the same pressure.
It cannot be a translation of a different component of the same tile because of Proposition~\ref{prop:planar-tilings}.
If $e$ is shared by the translation $g\in G$ of the same component $C$, it means that $e-g$ is another edge of the same component which, of course, is parallel to $e$.
Otherwise, the two regions have the same pressure $p=-p$ and hence $p=0$. 
In the case $p=0$ all the components of all the tiles are straight line segments with angles of 120 degrees, so, all the components are hexagonal polygons with edges parallel to the edges of a common regular hexagon: 
hence they are composed of three pairs of opposite parallel edges.

For the second point suppose now that $C$ is also convex and, by contradiction,
suppose that it has a flat edge $e$ and a curved edge $f$.
For the previous point, the edge $e$ has a parallel edge $e'=e+g$ with $g\in G$. 
We can then follow the normal vector starting from $e$, passing through $f$ and arriving 
at $e'$. Along this path, the normal vector makes a total turn of 180 degrees and it never changes the direction of rotation (because $C$ is convex). On every vertex, the normal vector makes a jump of 60 degrees. This means that there cannot be more than two vertices because otherwise 180 degrees are spanned within three jumps of 60 degrees and all the edges in between $e$ and $e'$ must be flat, contrary to our assumption. Hence the curved edge $f$ is the only edge between $e$ and $e'$ and it is a circular arc spanning an angle of 60 degrees.

If $v$ is the vertex shared by $e$ and $f$, the other vertex of $f$ must be $v+g$ because it is shared with $e'=e+g$.
Let $D$ be the component on the other side of the curved edge 
$f$ of $C$. 
Following the edges of $D$ we notice that the third edge in the vertex $v+g$ (apart from $e'$ and $f$) 
must be $f+g$ and then we arrive at $v+2g$ and repeat the process. 
It turns out that the boundary of $D$ never closes and hence $D$ is unbounded. 
This is a contradiction since $D$ has finite area. 
\end{proof}

\begin{figure}
    \begin{center}
    \includegraphics[height=2.5cm]{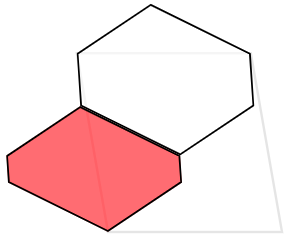}
    \qquad
    \includegraphics[height=2.5cm]{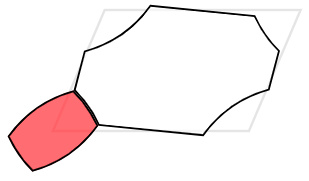}
    \includegraphics[height=2.5cm]{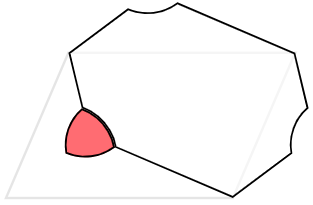}
    \end{center}
    \caption{The three possible configurations for an isoperimetric 
        $2$-tiling relative to a fixed lattice.
    The first one is configuration (6;6) where the two tiles
    are two hexagons with straight edges (equal pressures);
    the second one is configuration (4;8) where the tile with larger pressure is 
    a curved rectangle and the tile with lower pressure is a chipped parallelogram 
    i.e., an octagon with alternating flat and concave edges;
    the third one is configuration (3;9) where the tile with larger pressure is
    a Realeaux triangle and the tile with lower pressure is a chipped hexagon which is 
    an ennagon (9 edges) with 6 flat edges and 3 concave edges.
    } 
    \label{fig:configurations}
\end{figure}

\begin{theorem}[classification of $2$-tilings with equal pressures]
\label{th:equal_pressure}%
Let $\T$ be a non degenerate, periodic $2$-tiling of $\R^2$
isoperimetric relative to a lattice $G$.
Let $E_1$ and $E_2$ be two generators of the tiles.
Suppose that the two regions have equal pressures $p_1=p_2=0$. 
Then both $E_1$ and $E_2$ are hexagons with straight edges (in particular they are connected).
We will label this configuration as $(6;6)$ standing for the number 
of edges of the two tiles. See Figure~\ref{fig:configurations}.
Given $G$, $\abs{E_1}$ and $\abs{E_2}$, the configuration is uniquely 
determined up to translations.
\end{theorem}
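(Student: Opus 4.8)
\emph{Step 1: reduction to convex hexagons.} Since $p_1=p_2=0$, Theorem~\ref{th:regularity} tells us every interface has curvature $|p_k-p_j|=0$, so each edge is a straight segment and every vertex is a triple point with $120^\circ$ angles. Hence each connected component of $E_1$ or $E_2$ is a bounded polygon all of whose interior angles equal $120^\circ$; being $<180^\circ$ these angles force convexity, and since the exterior angles are all $60^\circ$ and sum to $360^\circ$ the polygon has exactly six vertices. Thus every component is a convex hexagon whose edges point in three fixed directions (the edge directions of a regular hexagon) and come in three pairs of opposite parallel edges, exactly as in the proof of Proposition~\ref{prop:planar-double-tilings}(i). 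Moreover two distinct such hexagons share at most one edge: two shared edges would be either consecutive in one of them — then their common vertex sees only two regions, contradicting that it is a $120^\circ$ triple point — or opposite in one of them, which would force the other (convex) hexagon to contain points strictly on both sides of the first one while being disjoint from its interior, impossible. Consequently the induced hexagonal tiling of $\R^2$ is edge-to-edge, a relative-interior point of an edge cannot be a vertex, and exactly three hexagons meet at each vertex.

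\emph{Step 2: combinatorics on the quotient torus.} Passing to $\R^2/G$, the tiling descends to a tiling by $n:=n_1+n_2$ hexagons, where $n_i\ge1$ is the number of components of $E_i$. Label each hexagon of $\R^2/G$ by the generator it comes from. I claim: \emph{no two distinct hexagons of $\R^2/G$ coming from the same generator are adjacent.} Indeed, if $\overline C,\overline C'$ are adjacent and both come from $E_i$, pick adjacent lifts $C+g$ and $C'+g'$ (with $C,C'$ components of $E_i$) sharing an edge $e$; applying Proposition~\ref{prop:planar-tilings} to the regions $E_i+g$ and $E_i+g'$ shows the component of $E_i+g'$ meeting $e$ is $C+g'$, hence $C'=C$ and $\overline C=\overline C'$. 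Since at each vertex the three incident hexagons are pairwise adjacent, if they were pairwise distinct on $\R^2/G$ then two of them would come from the same generator (there are only two) and be adjacent — contradiction. Therefore \emph{at every vertex at least two of the three incident hexagons coincide on $\R^2/G$.}

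\emph{Step 3: one hexagon per tile.} I claim $n=2$, hence $n_1=n_2=1$ and each tile is a single hexagon. Suppose first some hexagon $H$ of $\R^2/G$ has no self-adjacency (no edge shared with a $G$-translate of the same component). Going around $\partial H$ and using Step~2 at each of its six vertices, the two neighbours of $H$ at that vertex must coincide (neither may equal $H$); hence all six neighbours of $H$ equal one hexagon $H'$. But then six edge-slots of $H'$ are taken by $H$, so \emph{all} edges of $H'$ meet $H$, and $\{H,H'\}$ already tiles $\R^2/G$, giving $n=2$. It remains to exclude the case $n\ge3$, in which (by the previous paragraph) \emph{every} hexagon of $\R^2/G$ has a self-adjacency, i.e.\ a pair of opposite edges that are equal and joined by a lattice vector of $G$. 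This is the main obstacle. The plan is to argue in the spirit of the proof of Proposition~\ref{prop:planar-double-tilings}(ii): starting from a self-adjacency of a hexagon $H$, say along its top and bottom edges via $g\in G$, one follows the vertical chain $\{H+kg\}_{k\in\Z}$ together with the hexagons glued along the slanted edges of this chain; using that the three edge-directions are globally fixed, that consecutive chain members share a full horizontal edge, and that (Step~2) no vertex may see three distinct hexagons, one forces the ambient pattern to collapse and shows it cannot accommodate a fundamental domain with $n\ge3$ inequivalent hexagons — a contradiction. (Equivalently, a count of the $2n$ vertices of $\R^2/G$ against self-adjacencies shows each hexagon "owns'' exactly two vertices and each vertex is owned, a rigidity one then checks to be incompatible with the fixed edge-directions unless $n\le2$.)

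\emph{Step 4: uniqueness and the main difficulty.} With $n_1=n_2=1$, write $E_1,E_2$ for the two hexagons. Summing edge lengths by direction gives $\Per(\T)=L_1+L_2+L_3$, where $L_j$ is the total length, per period, of the edges in the $j$-th of the three directions (each interface edge is counted in both adjacent hexagons). The three direction vectors and the six edge lengths of $E_1$ (those of $E_2$ being then fixed by the incidences) are subject to: the $120^\circ$ closure relations, the gluing relations forcing shared edges to be equal, the relation fixing the translation lattice to be $G$, and the two area constraints $|E_1|=x$, $|E_2|=1-x$. An elementary elimination shows these leave no freedom beyond an overall translation, proving uniqueness of the configuration $(6;6)$. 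The genuinely hard part of the whole proof is Step~3 — ruling out three or more hexagons per fundamental domain — since the purely combinatorial constraint of Step~2, although strong, is not by itself conclusive and must be combined with the rigidity coming from the three fixed edge-directions, exactly as in the "follow the edges until the region fails to close'' mechanism already used for Proposition~\ref{prop:planar-double-tilings}(ii).
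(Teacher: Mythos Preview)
Your Steps~1--2 are sound and your torus reformulation is a nice way to package Proposition~\ref{prop:planar-tilings}: it is correct that on $\R^2/G$ no two \emph{distinct} components of the same generator are adjacent, and hence that at every vertex two of the three incident components must coincide. However, Step~3 in the second case (every hexagon self-adjacent) is only a sketch, and the sketched plan cannot succeed as stated, because you never invoke the isoperimetric hypothesis --- and without it the conclusion $n\le 2$ is simply false.

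Here is a concrete obstruction. Take four parallel ``strips'' of hexagons as in Figure~\ref{fig:hexagons}, generated by one vertical translation $g$, with strip widths $u_1,u_2,u_3,u_4$ and common slanted edges $v,w$ (so $v+w=-g$). Let the lattice be $G=\langle g,h\rangle$ where $h$ shifts by four strips, and set $E_1=C_1\cup C_3$, $E_2=C_2\cup C_4$. This is a genuine periodic $2$-tiling satisfying \emph{all} of your combinatorial constraints: every component is a convex hexagon with edges in the three fixed directions; every component is self-adjacent (via $g$); adjacent strips alternate between $E_1$ and $E_2$, so distinct components of the same generator are never adjacent; and at every vertex exactly two of the three incident components coincide on the torus. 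Yet $n=4$. So neither ``following the chain $\{H+kg\}$ along the slanted edges'' nor your vertex-ownership count can force $n\le 2$ --- those arguments would apply verbatim to this example.

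What the paper does here, and what your outline is missing, is to \emph{use minimality of the perimeter} at exactly this point. Starting from a self-adjacency $C_1\sim C_1+g$, the paper builds the strips $C_1,C_2,C_3$ (with $C_2$ a component of $E_2$ and $C_3$ a component of $E_1$, all sharing the slanted edge-vectors $v,w$) and then swaps the second and third strips. After the swap the translates of $C_1$ and of $C_3$ become adjacent along an entire zig-zag interface; since both lie in $E_1$ this interface can be erased, strictly decreasing $\Per(\T)$. Minimality then forces $C_3=C_1$, and by the analogous argument $C_2=E_2$, yielding connectedness. Your Step~3 needs some variation-of-perimeter argument of this kind; the purely combinatorial rigidity you allude to does not exist.
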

\begin{proof}
Since the regions have equal pressures
we know that all the edges must have zero curvature, 
and since all angles are equal to 120 degrees, all connected components of $E_1$ and $E_2$
are hexagonal. Consider one connected component $C$ of $E_1$. 

\emph{Step 1.} We claim that $C$ cannot be surrounded only by translations of $E_2$.
Otherwise, 
each of the six components of the translations of $E_2$ surrounding $C$ 
would share an edge with the previous and next one. 
By Proposition~\ref{prop:planar-tilings} 
this means that they are all different translations of 
the same component $D$ of $E_2$.
So they can be written as $D+g_k$ with distinct $g_1,\dots, g_6\in G$
choosen so that $D+g_1,\dots,D+g_6$ are ordered in counter-clockwise order 
around $C$.
Since two adjacent translations of $D$ share an edge, each edge of the hexagon $D$ 
must be a translation of the opposite edge, 
meaning that opposite edges of $D$ have equal lengths.
The vectors representing the ordered edges of $D$ can hence be denoted by
$v_1,v_2,v_3,-v_1,-v_2,-v_3$.
Since $C$ has the edges in common with the translations of $D$
the opposite edges occur in $C$ with the same order: $-v_1,-v_2,-v_3,v_1,v_2,v_3$.
This means that also $C$ is a translation of $D$:
$C=D+g_1+g_3-g_2$. 
This means that $C$ is contained in a translation of $E_2$ which is a contradiction 
since $C$ was chosen to be a connected component of $E_1$.

\emph{Step 2.} We claim that the opposite edges of $C$ have equal length.
By the previous step and Proposition~\ref{prop:planar-tilings} we know that $C$ is adjacent to a translation $C+g$ of itself, with $g\in G$. 
So at least one couple of opposite edges have equal length.
Let $v_1,\dots,v_6$ be the ordered vectors identifying the edges of 
$C$ so that $v_1+\dots+v_6=0$. 
We know that $v_k$ is parallel to $v_{k+3}$ (with opposite direction)
for $k \in \Z$ (mod 6).
Suppose that $v_6=-v_3$ are the vectors corresponding to the two opposite sides with equal length
while $v_4=-\alpha v_1$ and $v_5=-\beta v_2$.
From $v_1+v_2-\alpha v_1-\beta v_2=0$ since $v_1$ and $v_2$ are independent 
we obtain $1-\alpha = 1-\beta = 0$ which means that also $v_4=-v_1$ and $v_5=-v_2$.
The claim follows.

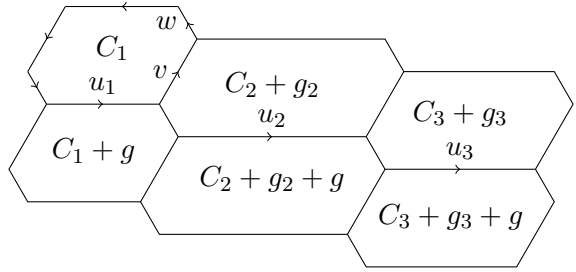
\begin{figure}
\begin{tikzpicture}[x=0.5cm,y=0.5cm]
\begin{scope}[
  decoration={
    markings,
    mark=at position 0.5 with {\arrow{>}}}]
  \draw[postaction={decorate}](0,0) -- ++(0:3) node[midway,above] {$u_1$};
  \draw[postaction={decorate}](3,0) -- ++(60:2) node[midway,left] {$v$};
  \draw[postaction={decorate}](3,0) ++(60:2) -- ++(120:1) node[midway,left] {$w$};
  \draw[postaction={decorate}](3,0) ++(60:2) ++(120:1) -- ++(180:3);
  \draw[postaction={decorate}](3,0) ++(60:2) ++(120:1) ++(180:3) -- ++(240:2);
  \draw[postaction={decorate}](3,0) ++(60:2) ++(120:1) ++(180:3) ++(240:2) -- ++(300:1);
  \draw(1.75,1.5) node {$C_1$};
  \draw(3,0) node[right] {$p$};

  \draw(0,0) -- ++(240:2) -- ++(300:1) -- ++(0:3) -- ++(60:2) -- ++(120:1);
  \draw(1.25,-1.25) node {$C_1+g$};

  \draw[postaction={decorate}](3,0) ++(-60:1) -- ++(0:5) node[midway,above] {$u_2$};
  \draw(8,0) ++(-60:1) -- ++(60:2) -- ++(120:1) -- ++(180:5);
  \draw(6,0.5) node {$C_2+g_2$};

  \draw(8,0) ++(-60:1) -- ++(-60:1) -- ++(-120:2) -- ++(180:5) -- ++(-240:1);
  \draw(6,-2) node {$C_2+g_2+g$};

  \draw[postaction={decorate}](8,0) ++(-60:2) -- ++(0:4) node[midway,above] {$u_3$};
  \draw(12,0) ++(-60:2) -- ++(60:2) -- ++(120:1) -- ++(180:4);
  \draw(11,-0.25) node {$C_3+g_3$};

  \draw(12,0) ++(-60:2) -- ++(-60:1) -- ++(-120:2) -- ++(180:4) -- ++(120:1);
  \draw(10.75,-3) node {$C_3+g_3+g$};
  
\end{scope}
\end{tikzpicture}
\caption{The construction of the hexagons in the proof of Theorem~\ref{th:equal_pressure}, Step 3.}
\label{fig:hexagons}
\end{figure}

\emph{Step 3.} Conclusion (see Figure~\ref{fig:hexagons}).
Consider a connected component $C_1$ of $E_1$ and let $g\in G$ be the translation
given in Step 1 such that $C_1+g$ is adjacent to $C_1$ itself. 
Let $p$ be one of the two vertices in common between $C_1$ and $C_1+g$. 
We claim that there is a connected component $C_2$ of $E_2$ and $g_2\in G$ such that $C_2+g_2$ has a vertex in $p$. If, by contradiction, it holds that $C_2\subset E_1$, then also $C_2=C_1$ (by Proposition \ref{prop:planar-double-tilings}) and the whole plane would be covered by $C_1+g$ with $g\in G$. So $\abs{E_1\setminus C_1}=0$ and $\abs{E_2}=0$, 
against the assumption that the tiling is not degenerate. So $C_2$ must be a connected component of $E_2$.
Two edges of the component $C_2+g_2$ are in common with $C_1$ and $C_1+g$ and hence are represented 
by two vectors $-v,-w$ where $v,w$ represent consecutive edges of $C_1$ with $v+w=-g$ (recall that all connected components are hexagons with equal angles of 120 degrees and opposite edges with equal length).
This means that $C_2+g_2+g$ is adjacent to $C_2+g_2$.
By repeating the reasoning with $C_1, C_1+g$ replaced by $C_2+g_2,C_2+g_2+g$ 
we find $C_3$, a connected component of $E_1$, and $g_3\in G$ such that 
$C_3+g_3$ has a common vertex with $C_2+g_2$ and $C_2+g_2+g$.
Moreover, $C_3+g_3+g$ is adjacent to $C_3+g_3$.
Now let $u_1,v,w,-u_1,-v,-w$ be the vectors representing the ordered edges of $C_1$,
let $u_2,v,w,-u_2,-v,-w$ be the ordered edges of $C_2$, 
and $u_3,v,w,-u_3,-v,-w$ be the ordered edges of $C_3$.
Notice that $u_1=t_1 u$, $u_2=t_2 u$, $u_3 =t_3 u$ all have the same direction $u$.
The hexagons $C_1+kg$, $C_2+g_2+kg$, $C_3+g_3+kg$ with $k\in \Z$ 
form three parallel strips with direction $g$ whose interfaces are zig-zags following 
the vectors $v$ and $w$. 
We can swap the second and third strip to obtain a new tiling with the same 
areas and with a whole interface removed. 
This is contrary to the minimality of $\T$ unless 
$C_3=C_1=E_1$.
Analogously one finds $C_2=E_2$.
Hence $E_1$ and $E_2$ are both connected.
Uniqueness is straightforward, concluding the proof.
\end{proof}
\begin{figure}
\begin{tikzpicture}[line cap=round,line join=round,>=triangle 45,x=1.0cm,y=1.0cm]
\clip(-0.57,-1) rectangle (4.83,1.5);
\draw [line width=1.2pt] (0,0)-- (1,0);
\draw [shift={(1.75,-0.43)},line width=1.2pt]  plot[domain=1.57:2.62,variable=\t]({1*0.87*cos(\t r)+0*0.87*sin(\t r)},{0*0.87*cos(\t r)+1*0.87*sin(\t r)});
\draw [shift={(1.75,0.43)},line width=1.2pt]  plot[domain=3.67:4.71,variable=\t]({1*0.87*cos(\t r)+0*0.87*sin(\t r)},{0*0.87*cos(\t r)+1*0.87*sin(\t r)});
\draw [line width=1.2pt] (3,0)-- (4,0);
\draw [line width=1.2pt] (4.5,0.87)-- (4,0);
\draw [line width=1.2pt] (4,0)-- (4.5,-0.87);
\draw (0.72,0.4) node {$2$};
\draw (0.75,-0.4) node {$2$};
\draw (1.5,0.0) node {$1$};
\draw (3.73,0.4) node {$2$};
\draw (3.78,-0.4) node {$2$};
\draw (4.4,0.0) node {$2$};
\begin{scriptsize}
\fill (1,0) circle (1.5pt);
\fill (4,0) circle (1.5pt);
\end{scriptsize}
\end{tikzpicture}
\caption{The two possible vertices for the tilings in the assumptions of Proposition~\ref{prop:positive_tiling}.
    The first case is a vertex of type 1-2-2, and the second is a vertex of type 2-2-2.}
\label{fig:vertices}
\end{figure}
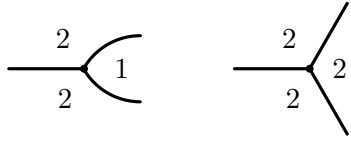
\begin{proposition}[properties of $2$-tilings with different pressures]
\label{prop:positive_tiling}%
Suppose that $\T$ is a periodic, non-degenerate, $2$-tiling of $\R^2$,
isoperimetric relative to a lattice $G$,
generated by $E_1,E_2$ with pressures $p_1=-p_2>0$.
\begin{enumerate}
\item[(i)] The connected components of $E_1$ are strictly convex.
\item[(ii)] Consider a vertex $v$ in $\partial \T$ and the three 
edges joining in $v$. We have two possibilities
(see Figure~\ref{fig:vertices}):
either the three edges
are flat and the three components around $v$ are different translations of the same connected
component of the region $E_2$
(vertex of type 2-2-2)
or else there is one flat edge separating two different translations of the same connected component of the region $E_2$
and two curved edges, symmetric with respect to the line containing the flat edge, 
which are consecutive edges of a translation of a component of $E_1$ (type 1-2-2).
\end{enumerate}
\end{proposition}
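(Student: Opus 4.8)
The plan is to prove (i) first and then feed it into a short case analysis for (ii). Throughout I use Theorem~\ref{th:regularity}: every edge separates two regions, and by item~(3) an edge between two regions of equal pressure is a straight segment, while an edge between a translate of $E_1$ and a translate of $E_2$ is a circular arc of curvature $|p_1-p_2|=2p_1>0$ having the (higher-pressure) $E_1$-side on the convex side. In particular a curved edge never borders two translates of $E_1$, nor two translates of $E_2$.

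For (i), I would argue via the turning of the boundary. Fix a connected component $C$ of $E_1$; by Theorem~\ref{th:regularity}(4) it is simply connected, so $\partial C$ is a simple closed curvilinear polygon. Traverse $\partial C$ counterclockwise. At every vertex the interior angle is $120^\circ$, so the tangent turns by $+60^\circ$; along a flat edge it does not turn; along a curved edge it turns by a strictly positive amount, since every curved edge of $C$ borders a translate of $E_2$ and hence has $C$ on its convex side. Since the total turning of a simple closed curve equals $2\pi$ and here all contributions are nonnegative with the vertex ones strictly positive, the turning is monotone and $C$ is convex (incidentally with at most six vertices). By Proposition~\ref{prop:planar-double-tilings}(ii), either all edges of $C$ are flat or all are curved. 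To exclude the first case, suppose all edges of $C$ are flat; then $C$ is a hexagon and, by Proposition~\ref{prop:planar-tilings}, each of its neighbours is a $G$-translate of $C$; applying Proposition~\ref{prop:planar-tilings} again across the flat edges of those translates and iterating shows that, up to a null set, the whole plane is covered by $G$-translates of $C\subseteq E_1$, whence $|E_2|=0$, contradicting nondegeneracy. Hence every component of $E_1$ has all edges circular arcs, so it is strictly convex.

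For (ii), fix a vertex $v\in\partial\T$ with its three incident edges and three surrounding regions, and classify $v$ by the number $c\in\{0,1,2,3\}$ of curved edges at $v$. If $c=3$, each interface would have one $E_1$-translate and one $E_2$-translate on its two sides, giving a proper $2$-coloring of the three regions arranged cyclically, which is impossible since $3$ is odd. If $c=1$, the curved edge has an $E_1$-translate on one side and an $E_2$-translate on the other, and chasing the two flat edges forces the third region to be at pressure $p_1$ and at pressure $p_2$ simultaneously, impossible since $p_1\neq p_2$. If $c=0$, all three regions share a common pressure; by part~(i) they cannot be $E_1$-translates, so they are $E_2$-translates, and Proposition~\ref{prop:planar-tilings} applied to each flat edge identifies them as $G$-translates of one component of $E_2$: this is the type 2-2-2 vertex. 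If $c=2$, the unique flat edge separates two equal-pressure regions, which by part~(i) are not $E_1$-translates, hence are $G$-translates of the same component of $E_2$; the third region is then an $E_1$-translate, i.e.\ a translate of a strictly convex component of $E_1$, whose two edges at $v$ are precisely the two curved ones. Finally, the two signed curvatures at $v$ must cancel the flat edge's $0$ by Theorem~\ref{th:regularity}(2), so the two arcs have opposite signs and common magnitude $2p_1$; as the $120^\circ$ angles make their tangents at $v$ mirror images across the line carrying the flat edge, the arcs themselves are symmetric with respect to that line. This is the type 1-2-2 vertex, completing the classification.

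The step I expect to be the main obstacle is part~(i): turning the informal rotation-index (Gauss--Bonnet) count into a rigorous argument for a curvilinear polygon whose number and type of edges is not known in advance, and in particular excluding the possibility that a component of $E_1$ carries a flat edge, which is exactly where the propagation argument and the nondegeneracy hypothesis are needed. Once (i) is available, (ii) is a purely combinatorial check over the four vertex types, with no analytic content.
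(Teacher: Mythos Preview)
Your proof is correct and follows essentially the same approach as the paper. The only differences are cosmetic: for (i) you spell out the rotation-index argument that the paper compresses into the single sentence ``since $p_1>0$ we know that $C$ is convex,'' and for (ii) you split into four cases by the number $c$ of curved edges, whereas the paper splits into two cases according to whether or not an $E_1$-translate appears at the vertex (your $c=1,3$ cases are the impossible configurations that the paper's dichotomy bypasses implicitly). One small point worth making explicit in your (i): before invoking Proposition~\ref{prop:planar-tilings} to say a flat-edge neighbour of $C$ is a translate of $C$, you need the observation (which the paper states) that the neighbour must be a translate of $E_1$ rather than $E_2$, since $p_1\neq p_2$ forces any $E_1$--$E_2$ interface to be curved.
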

\begin{proof}
Let $C$ be a component of $E_1$. Since $p_1>0$ we know that 
$C$ is convex. 
We now claim that $C$ is strictly convex.
By Proposition~\ref{prop:planar-double-tilings} (ii) the edges of $C$ are either all strictly curved or else all flat. Suppose by contradiction they are all flat: then $C$ is a hexagon and crossing each of the edges we find a translation of $C$ (we cannot find the other region, because they have different pressures) and hence $C$ is actually covering the whole plane. This means that $\abs{E_2}=0$, against our assumptions. So $C$ is strictly convex.

Each vertex of the tiling has order three, so we have three components around it. If one of the components belongs to a translation of $E_1$, it must be strictly convex, by the previous point. In this case, the other two components have a strictly concave edge and hence are components of translations of $E_2$.
We know that the sum of the three curvatures of the three edges must be zero. The two curved edges have the same curvature and their contribution cancels in the sum. Hence the third edge must be flat and we have a vertex of type 1-2-2.

Otherwise, all the three components are translations of components
of $E_2$, separated by flat edges. By Proposition~\ref{prop:planar-tilings} we know that they must be different translations of the same component of $E_2$. Hence the vertex is of type 2-2-2.
\end{proof}

\begin{theorem}[classification of $2$-tilings with different pressures]\label{th:different_pressures}
Let $\T$ be a non-degenerate $2$-tiling of $\R^2$, 
isoperimetric relative to a fixed lattice $G$, 
let $E_1,E_2\in \T$ be two generators of the tiling 
with corresponding pressures $p_1 = -p_2 >0$.
Then we have the following possibilities (see Figure~\ref{fig:configurations}):
\begin{enumerate}
\item[(i)]
 $E_1$ is a curvilinear rectangle (i.e., a curvilinear quadrangular 
 shape with the symmetries of a rectangle) and $E_2$ 
 is an octagonal curvilinear polygon with alternating flat and concave edges (we will label this configuration as $(4;8)$ standing 
 for the number of edges of $E_1$ and $E_2$);
 \item[(ii)]
 $E_1$ is a Reuleaux triangle (i.e., a curvilinear triangle with equal angles of 120 degrees and equal curved edges) 
 and $E_2$ is a \emph{chipped hexagon} which is 
 a ennagonal ($9$-edges) curvilinear polygon 
 with 6 flat edges and 3 curved concave edges 
 (we will label this configuration as $(3;9)$ standing for the number of edges of $E_1$ and $E_2$).
\end{enumerate}
Given $G$, $\abs{E_1}$ and $\abs{E_2}$, the configuration $(4;8)$
is uniquely determined up to translations. 
Finally, if the hexagon $E_2$ is regular (invariant by appropriate rotations of 60 degrees), also the configuration (3;9) is unique up to isometries.
\end{theorem}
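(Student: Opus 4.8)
The plan is to combine the local structure of isoperimetric tilings with different pressures (Proposition~\ref{prop:positive_tiling}) with a rotation-index computation and a periodicity-driven rigidity argument in the spirit of the proof of Theorem~\ref{th:equal_pressure}. First I would fix a connected component $C$ of $E_1$. By Proposition~\ref{prop:positive_tiling} and Theorem~\ref{th:regularity}, $C$ is a strictly convex curvilinear polygon whose edges are circular arcs of one and the same curvature meeting pairwise at $120^\circ$ (recall that, as in the proof of Proposition~\ref{prop:positive_tiling}, a component of $E_1$ cannot have a flat edge, since otherwise it would be a hexagon tiling the plane alone, forcing $\abs{E_2}=0$), and each vertex of $\partial C$ is of type $1$-$2$-$2$. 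If $C$ has $n$ arcs subtending angles $\theta_1,\dots,\theta_n>0$, then since the tangent turns by $\theta_i$ along the $i$-th arc and by $\pi/3$ at each vertex, the rotation-index identity gives $\sum_{i=1}^{n}\theta_i+n\tfrac{\pi}{3}=2\pi$, hence $\sum_i\theta_i=(6-n)\tfrac{\pi}{3}$ and $n\le 5$. On the other hand $n\ge 2$: a single circular arc cannot close up at one vertex, and a vertex-free component of $E_1$ would be a round disk, whose boundary circle would be a connected component of $\partial\mathbf T$ disjoint from the rest of the interface, contradicting Theorem~\ref{th:regularity}(4). So $n\in\{2,3,4,5\}$.

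Next I would analyse the ring of $E_2$-pieces around $C$. Label the arcs cyclically $a_1,\dots,a_n$ with $v_i=a_i\cap a_{i+1}$; on the concave side of each $a_i$ lies a translate of some component of $E_2$, and at the $1$-$2$-$2$ vertex $v_i$ the (unique) flat edge separates the two neighbours along $a_i$ and $a_{i+1}$, so these share a flat edge and, by Proposition~\ref{prop:planar-tilings}, are translates of the \emph{same} component. Running this around $C$, all the neighbours are translates $D+h_1,\dots,D+h_n$ of a single component $D$ of $E_2$ ($h_i\in G$ distinct), and the flat edge at $v_i$ is a common flat edge of $D+h_i$ and $D+h_{i+1}$. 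By Proposition~\ref{prop:planar-double-tilings}(i) the flat edges of $D$ come in opposite parallel pairs, and the rotation-index identity applied to $D$ (concave arcs now contributing negative turning) shows that $D$ has at least $6$ edges. Passing to the torus $\R^2/G$, on which the interface is a finite cubic graph, Euler's formula together with elementary edge/vertex counting then relates the number of arcs of $E_1$, the number of flat edges, and the numbers of components of $E_1$ and $E_2$.

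The decisive step — and the one I expect to be the main obstacle — is to eliminate $n=2$ and $n=5$ and, crucially, to prove that $E_1$ and $E_2$ are connected, i.e.\ $E_1=C$ and $E_2=D$ up to null sets; here periodicity is essential, since a priori the adjacency graph on the torus can be arbitrarily complicated. The idea is to track the directions of the flat edges of the ring $D+h_1,\dots,D+h_n$: the opposite-edge pairing organizes the flat edges of each copy $D+h_i$ into parallel pairs, and following how the flat edges of consecutive copies are identified and paired forces these directions into a rigid pattern around $C$. Whenever $n\notin\{3,4\}$, or whenever $E_1$ or $E_2$ possesses a second translation-inequivalent component, one is then led either to an unbounded component of $E_2$ (arguing as in the proof of Proposition~\ref{prop:planar-double-tilings}(ii)) or to two adjacent parallel strips of cells that can be interchanged, in the spirit of Step~3 of the proof of Theorem~\ref{th:equal_pressure}, strictly decreasing the perimeter at fixed areas. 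This rules out $n=2,5$ and leaves precisely $E_1=C$ with $n\in\{3,4\}$ and $E_2=D$.

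It remains to identify the two shapes and to establish uniqueness. With $E_1=C$ and $E_2=D$, the torus count pins the number of edges of $E_2$ to be $8$ when $n=4$ and $9$ when $n=3$. For $n=4$ one has $\sum_i\theta_i=\tfrac{2\pi}{3}$, and matching the four copies of $E_2$ around $E_1$ through the opposite-edge pairing and the $120^\circ$ condition forces $\theta_1=\theta_3$, $\theta_2=\theta_4$, together with the two axes of reflection: $E_1$ is a curvilinear rectangle whose four arcs have equal curvature and $E_2$ is the chipped parallelogram, an octagon with alternating flat and concave edges — configuration $(4;8)$. For $n=3$ one has $\sum_i\theta_i=\pi$ and the analogous matching forces $\theta_1=\theta_2=\theta_3=\tfrac{\pi}{3}$: $E_1$ is a Reuleaux triangle and $E_2$ is the chipped hexagon, an ennagon with six flat and three concave edges — configuration $(3;9)$. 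Finally, for uniqueness: in $(4;8)$ the tile $E_1$ depends on exactly two scalar parameters (the common radius and one arc angle, the remaining one being its complement to $\tfrac{\pi}{3}$), so imposing $\abs{E_1}$ and requiring the edge vectors of the underlying parallelogram of $E_2$ to generate the given lattice $G$ yields a determined trigonometric system with a unique solution, whence $(4;8)$ is unique up to translation; in $(3;9)$ the Reuleaux triangle is rigid once $\abs{E_1}$ is fixed, but the underlying flat hexagon of $E_2$ still ranges over a one-parameter family of shapes compatible with $G$, and requiring that hexagon to be regular removes exactly this remaining freedom, giving uniqueness up to isometry. In short, the shape determination and the uniqueness statements reduce to trigonometric bookkeeping, and the essential difficulty is concentrated in the connectedness and rigidity argument of the third step.
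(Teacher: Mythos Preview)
Your approach differs from the paper's in that you work from a component $C$ of the \emph{high-pressure} region $E_1$, whereas the paper fixes a component $C$ of the \emph{low-pressure} region $E_2$. This is not merely cosmetic: the $E_2$-viewpoint gives direct access to the flat-edge structure, and the paper's key combinatorial step is a planar Euler-formula argument (Lemma~\ref{lm:eulero}) bounding the number of flat edges of any component of $E_2$ by six. Combined with the turning-angle lemma (forcing at least six edges total on a component with nonpositive curvature) and the observation that curved edges of an $E_2$-component can never be consecutive, this immediately reduces the problem to the cases of four or six flat edges. With four one obtains $(4;8)$ directly; with six one obtains either $(3;9)$ or a third \emph{stationary} configuration $(3{,}3;12)$, in which $E_2$ is a single chipped dodecagon while $E_1$ consists of \emph{two} Reuleaux triangles. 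The paper then eliminates $(3{,}3;12)$ by a specific perimeter-decreasing variation: enlarge one triangle and shrink the other keeping $r^2+s^2$ fixed, and use the strict concavity of $r\mapsto r+\sqrt{k-r^2}$.

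The gap in your proposal lies exactly in your ``decisive step''. The two mechanisms you invoke --- forcing an unbounded $E_2$-component as in the proof of Proposition~\ref{prop:planar-double-tilings}(ii), or finding parallel strips to interchange as in Step~3 of Theorem~\ref{th:equal_pressure} --- do not dispose of the $(3{,}3;12)$ configuration: it is bounded, periodic, and has no strip structure whose swap decreases perimeter (it is a genuine critical point of the functional). So your outline does not actually rule out disconnected $E_1$. Similarly, your elimination of $n=2$ and $n=5$ is only gestured at: for $n=2$ the two flat edges at the lens vertices are collinear and one can plausibly push towards an unbounded neighbour, but for $n=5$, and for mixed multi-component configurations, neither of your two mechanisms is evidently applicable, and your torus Euler count already presupposes the connectedness you are trying to establish. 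In short, the architecture is reasonable, but the step you yourself flag as the main obstacle is not merely the hardest --- as written it does not go through, and the paper's argument at that point (a scaling/concavity variation specific to Reuleaux triangles) is of a different nature from anything you propose.
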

\begin{proof}
Let $C$ be a connected component of $E_2$.
The edges of $C$, separating the component $C$ from a translation of $E_1$,
must have negative curvature $p_2$
while the edges separating $C$ from a translation of $E_2$ must be flat.
Each flat edge has a corresponding parallel edge, hence the number of flat edges in $C$ 
is even.

We now claim that there are at most $3$ pairs of parallel flat edges in $C$.
Let us consider an abstract planar graph $H$ whose vertices are the elements of the group $G\subset \R^2$
and whose edges are given by the pairs $(g_1,g_2)$ such that $C+g_1$ is adjacent 
to $C+g_2$.
The graph $H$ is invariant under the action of $G$ on itself.
Notice that $G$ is an abelian group with two generators, hence $G$ is isomorphic to 
the group $\Z^2$.
Lemma~\ref{lm:eulero} proves that the vertices of $H$ must have order (number of edges per vertex)
not larger than 6. 
This means that the connected component $C$ has at most six flat edges.

By Proposition~\ref{prop:positive_tiling} at every vertex of the tiling we 
have either three flat edges (vertex of type 2-2-2) 
or one flat edge and two curved ones (vertex of type 1-2-2) 
meeting in the vertex.
Hence the region between the two curved edges of a vertex is a translation 
of a connected component of $E_1$.
This means that each curved edge of $C$ is adjacent to two flat edges.

Since $C$ has negative pressure, each edge of $C$ has a non positive 
turning angle, and hence, applying Lemma~\ref{lm:turning}, we find that 
the left-hand side of \eqref{eq:turning} is non positive. 
Hence the number of edges of $C$ is $n\ge 6$.

Since we cannot have two consecutive curved edges it means that we have at 
least three flat edges. 
Since flat edges are an even number we have at least four of them.
But we already stated that we have at most six flat edges, hence 
we have either four or six.

Suppose now that $C$ has two pairs of opposite parallel flat edges.
We first claim that all the vertices of $C$ are of type 1-2-2.
Suppose by contradiction that we have a vertex of type 2-2-2. 
The two flat edges of $C$ in such a vertex have a corresponding parallel opposite edge
whose corresponding vertex must also be of type 2-2-2. 
So each of the two opposite flat edges has an adjacent flat edge which,
by taking into account the 120 degrees of the angles, is different from the others.
Thus we would have at least 6 flat edges, which we have excluded.
Since all vertices of $C$ are of type 1-2-2, and $C$ is a component of the region $E_2$,
it means that the edges of $C$ are alternating between flat and curved edges and
$C$ is a curvilinear octagon with four flat edges and four curved concave edges.
If we prolong the flat edges of $C$ we obtain a parallelogram $P$ with straight edges, 
which must be a fundamental domain of the group $G$ because on the other side of each edge of $P$
we have a translated copy of $P$ itself, and hence $\ENCLOSE{P+g\colon g\in G}$ is a tiling 
of $\R^2$.
This means that $E_2=C$ is a ``chipped parallelogram`` (with four flat edges and four concave edges) 
and $E_1$ is a curvilinear quadrangular polygon.
By Lemma~\ref{lm:quadrangular} we obtain that $E_1$ is a curvilinear rectangle

We have found the $(4;8)$ configuration.

Suppose now that $C$ has three pairs of opposite parallel flat edges.
If there are no other edges, $C$ is a hexagon which alone will 
tile the whole space. This means that $\abs{E_1\setminus C}=0$ and $\abs{E_2}=0$ 
which is excluded by the assumption that the tiling is non-degenerate.

Hence $C$ must have at least a vertex $v$ of type 1-2-2. 
The flat edge in this vertex has a corresponding opposite flat edge,
obtained by means of a translation $g\in G$,
which also must have a 1-2-2 corresponding vertex $w=v+g$.
Following the curved edge of $C$ in $v$ we arrive at another 
vertex of type 1-2-2, then following the straight edge of $C$ we arrive 
at a vertex which can be either of type 1-2-2 or of type 2-2-2.
In the first case we can follow the curved edge of $C$ to reach another 
vertex of type 1-2-2. Then, following the straight edge of $C$ we arrive 
in a vertex which is separated from $w$ by the curved edge of $w$.

By these considerations, we arrive at two possible configurations:
either all the vertices are of type 1-2-2 and we have a 
\emph{chipped hexagon} which is a curvilinear dodecagon with 6 flat edges 
alternated with 6 curved concave edges 
(we will label this configuration $(3,3;12)$)
or we have a hexagon with only three chipped vertices which 
is an ennagon with 6 flat edges each of which has 
two vertices of different type 1-2-2 and 2-2-2 so that every two 
flat edges we have a curved concave edge
(we will label this configuration $(3;9)$).

In both configurations the flat edges of $C$ can be prolonged to obtain 
a flat hexagon $P$ containing $C$ such that $\ENCLOSE{P+g\colon g\in G}$ 
is a tiling of $\R^2$. This means that $E_2=C$.
Each vertex of $P$ is shared by $P$ and by $P+g$ and $P+h$ with $g,h\in G$.
If the vertex of $P$ is not a vertex of $C$ we easily deduce that 
$P\setminus C$ is contained in a component of $E_1$ which 
is a Reauleaux triangle (i.e., a curvilinear triangle with three equal angles of 120 degrees).

In the configuration labeled $(3,3;12)$ we have in fact two such triangles,
and $E_1$ turns out to be disconnected while $C=E_2$ 
has 12 edges.
In the configuration $(3;9)$ we have that $E_1$ is a single 
Reauleax triangle and $E_2$ has 9 edges.

The configuration $(3,3;12)$ is a stationary configuration but we claim that it is not isoperimetric. 
In fact, it is possible to increase the size of the larger of the two Reauleaux triangle (or any of them if they are equal) and decrease 
the size of the other Reauleaux triangle 
so that the total area enclosed by the two triangles remains constant while 
the perimeter decreases.
This is because the contribution to the perimeter of a Reauleaux 
triangle (the difference between the perimeter of the triangle and the length of 
the corresponding tripod) is increasing and proportional to the linear size of the triangle,
while the enclosed area is proportional to the square of the linear size.
So, if we have two Reauleaux triangles of respective sizes 
$r$ and $s$, the total volume is kept fixed if $k=r^2+s^2$ is constant 
while the contribution to the total perimeter is proportional to
\[
  r + s = r + \sqrt{k-r^2}.
\] 
This function is concave and has a maximum for $r=s$ hence 
we can always decrease the perimeter by slightly enlarging the larger 
Reauleaux triangle and decreasing the smaller one.
\end{proof}
Notice that the configurations $(3;9)$ is, for a general lattice, not unique
because we can choose to place the Reauleaux triangle in two different vertices of the hexagon. Notice that Theorem \ref{th:equal_pressure} combined with Theorem \ref{th:different_pressures} directly implies our first main result Theorem \ref{th:main 1}.

\section{Optimal lattice}\label{sec:optimal_Tilings} 
In this section we identify the optimal lattice for isoperimetric periodic $2$-tilings, for 
all values of the two assigned areas.
We start with the degenerate case, namely when one of the two areas is zero.
The result is implied by Hales Theorem \cite{Hales01}, 
we nevertheless propose a proof which is much simpler since periodicity is enforced
(see also \cite{CesaroniNovaga22} for an extension to anisotropic perimeters).
\begin{theorem}[optimal periodic honeycomb]\label{th:honeycomb}
Let $\T$ be an isoperimetric periodic $1$-tiling of the plane.
Then $\T$ is generated by a single tile $E$ which is a regular 
hexagon, and
\begin{equation}\label{eq:hexagon}
  \Per(E) = \sqrt[4]{12}\sqrt{\abs{E}}.
\end{equation}
\end{theorem}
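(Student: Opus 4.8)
The plan is to combine the regularity and classification machinery already established with the degenerate case $N=1$ of the variational problem \eqref{eq:functional G} (with the lattice also free). First I would invoke the existence theory from \cite{NobiliNovaga24}: an isoperimetric periodic $1$-tiling exists, and by Theorem~\ref{th:regularity} applied with $N=1$ the single generator $E$ is a bounded curved polygon whose edges meet at triple points with $120^\circ$ angles and whose edges have curvature equal to $|p_1-p_1|=0$ — that is, all edges are straight segments. Thus every connected component of $E$ is a convex (indeed hexagonal, since all angles are $120^\circ$ and edge curvatures vanish) polygon, and moreover every component must actually be a hexagon: a polygon with all interior angles equal to $120^\circ$ in a tiling by translates has exactly six edges. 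This is essentially the $N=1$ specialization of the argument in Theorem~\ref{th:equal_pressure}.

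Next I would show $E$ is connected and its single component tiles the plane by translations. Since there is only one generator and a single pressure, Proposition~\ref{prop:planar-tilings} forces any two adjacent components to be translates of one another, so all components of $E$ are mutual translates; picking one component $C$, the whole tiling $\{C+g : g\in G'\}$ for the enlarged group $G'$ generated by $G$ together with the translations relating components of $E$ is still a periodic tiling with the same perimeter density, so without loss of generality $E=C$ is a single hexagon whose translates tile $\R^2$. The opposite-edges-have-equal-length argument (Step 2 of Theorem~\ref{th:equal_pressure}, which only uses that $C$ is adjacent to a translate of itself across each edge) then shows $E$ is a hexagon with three pairs of parallel opposite edges of equal length, i.e. a centrally symmetric hexagon.

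The remaining point is the sharp inequality: among centrally symmetric hexagons with prescribed area, the regular one has least perimeter, with the stated constant. I would reduce $\Per(E)\ge \sqrt[4]{12}\sqrt{|E|}$ to a finite-dimensional optimization: parametrize the hexagon by the three edge vectors $v_1,v_2,v_3$ (with the remaining edges $-v_1,-v_2,-v_3$), express area and perimeter in terms of the $|v_i|$ and the angles between them, and minimize. By scaling it suffices to fix the perimeter and maximize the area; a symmetrization/convexity argument (or a direct Lagrange-multiplier computation, or a comparison with the regular hexagon via the isoperimetric-type inequality for centrally symmetric hexagons) shows the maximum is attained when all $|v_i|$ are equal and consecutive edges meet at $120^\circ$, i.e. $E$ is the regular hexagon; then a direct computation gives $\Per(E)=\sqrt[4]{12}\sqrt{|E|}$. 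Conversely the regular hexagon is admissible, so equality holds and the regular hexagon is the unique minimizer up to isometry.

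The main obstacle is the last step — the sharp geometric inequality for centrally symmetric hexagons and the identification of the regular hexagon as the unique optimizer. Everything before that is a routine specialization of the $N=2$ arguments already proved in the excerpt; the inequality itself is elementary but requires care to rule out degenerate (non-regular) hexagons with equal perimeter and area, which is exactly why the paper remarks this is "much simpler" than Hales' theorem yet still needs the periodicity constraint to make the hexagon-tiles-the-plane reduction work. I would handle it either by the explicit Lagrange computation or by citing/reproving the classical fact that among hexagons the regular one is isoperimetrically optimal, noting that our hexagon is automatically centrally symmetric so no further convexity hypothesis is needed.
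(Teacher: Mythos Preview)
Your approach matches the paper's in outline, but two steps are over-engineered and one of them has a gap as written.

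For connectedness, the ``enlarged group $G'$ with the same perimeter density'' move does not establish what you need: replacing $(E,G)$ by $(C,G')$ changes the tile volume, so $(C,G')$ is not a competitor in the same variational problem, and ``without loss of generality $E=C$'' does not follow. The paper's argument is actually shorter than what you propose: Proposition~\ref{prop:planar-tilings} (which you cite) already says that across every edge of a component $C$ of $E$ lies $C+g$ for some $g\in G$; hence $\{C+g:g\in G\}$ itself tiles the plane, and any other component of $E$ has measure zero. No enlarged lattice is needed.

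For the final optimization, you misidentify the difficulty. The regularity theorem has already fixed all interior angles at $120^\circ$, so there is nothing to gain by parametrizing with edge vectors $v_1,v_2,v_3$ and variable angles. The paper simply takes the three consecutive side-lengths $a,b,c$ (opposite sides being automatically equal), writes
\[
\Per(\T)=a+b+c,\qquad |E|=\tfrac{\sqrt 3}{2}(ab+bc+ca),
\]
and a one-line Lagrange multiplier computation gives $a=b=c$. This is the easiest step of the proof, not the main obstacle; there is no separate ``sharp geometric inequality for centrally symmetric hexagons'' to establish.
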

\begin{proof}
Since we have a single tile, 
the pressure must be $p=0$ and hence all connected components of the tile 
are hexagons with internal angles of 120 degrees.
Proposition~\ref{prop:planar-tilings} implies that $E$ is composed by a single 
connected component, hence $E$ itself is an hexagon.

Let $a,b,c$ be the length of three consecutive sides of the hexagon 
(opposite sides have the same length).
Then a straightforward computation gives
\begin{align*}
    \Per(\T) &= a+b+c\\
    \abs{E} &= \frac{\sqrt 3} 2 (ab + bc + ca).
\end{align*}
Using Lagrange multipliers it is easy to check that if $\Per(\T)$ 
is minimal with $\abs{E}$ fixed, then $a=b=c$ which 
means that $E$ is a regular hexagon.
\end{proof}
In the following three theorems, we handle the three other non degenerate configurations.
\begin{theorem}[optimal lattice for (6;6) configuration]\label{th:computation 66}
Let $\T$ be a non-degenerate, isoperimetric periodic $2$-tiling of the plane. If $\T$ is a $(6;6)$ configuration,
then the lattice admits a fundamental domain $D$ which is 
the union of two regular hexagons $E_1,E_2$ sharing an edge.
Moreover, one has 
\begin{align}
    \frac 16 &< \frac{\abs{E_i}}{\abs{E_1}+\abs{E_2}}<\frac 56,\qquad \text{for }i=1,2.\label{eq:vol 66}\\
   \Per(\T) &= 2\sqrt[4]{3}\sqrt{\abs{E_1}+\abs{E_2}}.\label{eq:Per 66}
\end{align}
Given $\abs{E_1}$ and $\abs{E_2}$, the configuration is uniquely determined 
up to isometries.
\end{theorem}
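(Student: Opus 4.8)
The plan is to start from the classification in Theorem~\ref{th:equal_pressure}, which already tells us that in a $(6;6)$ configuration both $E_1$ and $E_2$ are honest (connected) hexagons with $120^\circ$ angles meeting along a shared zig-zag interface that is invariant under a common lattice direction. First I would fix coordinates adapted to this shared direction: write the common edge vectors of the two hexagons as $v,w$ (the consecutive edges bounding the shared zig-zag, with $v+w$ a lattice generator $g$), and let $u_1,u_2$ be the remaining edge directions of $E_1,E_2$ respectively, so that $E_i$ has ordered edges $u_i,v,w,-u_i,-v,-w$ with $u_i = t_i u$ for a common direction $u$ and scalars $t_i>0$; this is exactly the normal form extracted in Step~3 of the proof of Theorem~\ref{th:equal_pressure}. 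The set $D = E_1 \cup E_2$ is then a fundamental domain because, as in that proof, $E_1+kg$ and $E_2+g_2+kg$ form parallel strips, and translating $D$ by the two independent vectors $g = v+w$ and $h$ (the vector connecting the two strips, essentially $t_1 u$ up to the zig-zag offset) tiles the plane.

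Next I would set up the two constrained optimization problems. Using the edge-length parametrization, with $a_i,b_i,c_i$ the three side lengths of $E_i$ (here $b_i,c_i$ are the lengths of $v,w$, hence \emph{equal for both tiles}: $b_1=b_2=:b$, $c_1=c_2=:c$, while $a_i = t_i |u|$ may differ), the formulas recalled in Theorem~\ref{th:honeycomb} give
\begin{align*}
  \Per(\T) &= (a_1+b+c) + (a_2+b+c) - (b+c) = a_1+a_2+b+c,\\
  |E_i| &= \tfrac{\sqrt3}{2}\,(a_i b + b c + c a_i),
\end{align*}
where the subtraction of $(b+c)$ accounts for the shared edge being counted once. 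Now I would minimize $a_1+a_2+b+c$ subject to the two area constraints with $b,c$ shared. Lagrange multipliers in the variables $a_1,a_2,b,c$: the $a_i$-equations force $\frac{\sqrt3}{2}(b+c)$ to equal a fixed multiple, giving $a_1=a_2=:a$ and forcing the multipliers on the two area constraints to coincide; then the $b$ and $c$ equations, together with $|E_1|=|E_2|$ now being forced, yield $a=b=c$, i.e.\ both hexagons are \emph{regular} of the \emph{same} size — so in fact $|E_1|=|E_2|$ is forced at the optimum, and $\abs{E_i} = \frac{3\sqrt3}{2}a^2$. Substituting back, $\Per(\T) = 4a$ and $\abs{E_1}+\abs{E_2} = 3\sqrt3\,a^2$, so $a = \big((\abs{E_1}+\abs{E_2})/(3\sqrt3)\big)^{1/2}$ and $\Per(\T) = 4a = 4\big(\abs{E_1}+\abs{E_2}\big)^{1/2}/(3\sqrt3)^{1/2} = 2\sqrt[4]{3}\,\sqrt{\abs{E_1}+\abs{E_2}}$ after simplifying $4/\sqrt{3\sqrt3} = 4/(3^{3/4}) = 4\cdot 3^{1/4}/3 \cdot \tfrac{}{}$... one checks $16/(3\sqrt3) = 16/(3^{3/2})$ and $(2\sqrt[4]3)^2 = 4\sqrt3 = 12/\sqrt3 = 12\cdot 3^{-1/2}$, and indeed $16\cdot 3^{-3/2} \ne 12 \cdot 3^{-1/2}$, so the correct accounting must keep $b,c$ as free shared parameters and the minimum is achieved on the \emph{boundary} of the feasible region rather than forcing $a=b=c$ — this is the subtle point, see below — and the genuinely optimal $D$ is the one where $E_1,E_2$ are regular hexagons of equal size, which gives \eqref{eq:Per 66}.

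The inequality \eqref{eq:vol 66} I would obtain as the \emph{feasibility} constraint: given that $b,c$ are shared and each $|E_i|>0$ with $a_i>0$, the ratio $|E_i|/(|E_1|+|E_2|)$ cannot reach its extreme values $0$ or $1$ (that would be the degenerate case), and a short computation with $a_i = (\frac{2}{\sqrt3}|E_i| - bc)/(b+c)$ needing to be positive, combined with the constraint that the resulting $D$ is still a valid fundamental domain of a \emph{two}-hexagon tiling (not one hexagon split, not a single hexagon of double area), pins the open bounds to $\frac16$ and $\frac56$; the endpoints $\frac16,\frac56$ are exactly where one hexagon degenerates to have a zero-length edge, i.e.\ becomes a pentagon/rectangle or the configuration collapses to a $1$-tiling of hexagons of area $|E_1|+|E_2|$ partitioned in a $1:5$ ratio along a sub-honeycomb. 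Uniqueness up to isometry then follows because the optimizer $(a,b,c) = $ the regular solution is the unique critical point in the interior of the feasible set, and the $(6;6)$ combinatorial type together with the shared-edge normal form determines the tiling rigidly. The main obstacle, I expect, is precisely the Lagrange-multiplier analysis: because $b$ and $c$ are \emph{coupled} between the two area constraints, the naive stationarity does not immediately give $a=b=c$, and one must argue separately that no interior critical point with $a_1\ne a_2$ or with $a,b,c$ not all equal can beat the symmetric one — most cleanly by a convexity/rearrangement argument showing $a_1+a_2$ is minimized, for fixed shared $b,c$ and fixed areas, when the two hexagons are as symmetric as the constraints allow, and then optimizing over $b,c$.
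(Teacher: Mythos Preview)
There are two concrete errors that derail the argument.

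\textbf{(1) Wrong perimeter formula.} By definition $\Per(\T)=\tfrac12\bigl(\Per(E_1)+\Per(E_2)\bigr)$, and the factor $\tfrac12$ already accounts for every interface being shared by two tiles. With your notation $\Per(E_i)=2(a_i+b+c)$, so
\[
\Per(\T)=(a_1+b+c)+(a_2+b+c)=a_1+a_2+2b+2c,
\]
\emph{not} $a_1+a_2+b+c$. Your extra subtraction of $(b+c)$ double-corrects for the shared zig-zag. This is exactly why your numerical check at the end fails.

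\textbf{(2) Wrong Lagrange conclusion.} The $a_i$-equations read $1=\lambda_i\tfrac{\sqrt3}{2}(b+c)$; they force $\lambda_1=\lambda_2$, \emph{not} $a_1=a_2$. With the correct perimeter, the $b$- and $c$-equations then give $b=c$, and combining with the $a_i$-equation yields $a_1+a_2=b+c=2b$. Crucially $a_1$ and $a_2$ remain free subject only to $a_1+a_2=2b$, so $\abs{E_1}\neq\abs{E_2}$ is perfectly allowed --- as it must be, since the theorem covers the whole range \eqref{eq:vol 66}. Your conclusion that ``$|E_1|=|E_2|$ is forced at the optimum'' contradicts the statement you are proving.

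Once these are fixed the computation is immediate: at the critical point $\Per(\T)=a_1+a_2+4b=6b$ and $\abs{E_1}+\abs{E_2}=\tfrac{\sqrt3}{2}\bigl(2b^2+2b(a_1+a_2)\bigr)=3\sqrt3\,b^2$, whence $\Per(\T)^2=36b^2=4\sqrt3\,\abs{D}$, which is \eqref{eq:Per 66}. The bounds \eqref{eq:vol 66} then drop out of $a_i\in(0,2b)$: as $a_1\to0^+$ one gets $\abs{E_1}\to\tfrac{\sqrt3}{2}b^2=\tfrac16\abs{D}$. The paper's proof is exactly this corrected computation; it also observes that since only $a_1+a_2$ enters, the \emph{lattice} is the same as in the symmetric case $a_1=a_2=b$, which is why $D$ can be taken as two regular hexagons even though $E_1,E_2$ themselves need not be regular.
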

\begin{proof}
The $(6;6)$ configuration comprises two adjacent hexagons $E_1$, $E_2$ 
with all angles equal to 120 degrees and all straight edges.
Let $a$, $b$, $s$, be the side lengths of $E_1$ and $a$, $b$, $t$
be the sides lengths of $E_2$ where $a$ and $b$ are the length of the 
shared edges.
A straightforward computation gives:
\begin{align*}
    \Per(\T) &= s+t+2a+2b\\
    \abs{E_1} &= \frac{\sqrt 3} 2 (ab+sa+sb)\\
    \abs{E_2} &= \frac{\sqrt 3} 2 (ab+ta+tb). 
\end{align*}
Using Lagrange multipliers it is easy to check that if $\Per(\T)$ is 
minimal with respect to small variations of $a,b,s,t>0$ with 
$\abs{E_1}$ and $\abs{E_2}$ fixed, then $a=b$ and $s+t=a+b$.
Moreover, we get
\begin{align*}
    \Per(\T) &= 6 a\\
   \abs{E_1} + \abs{E_2} &= 3\sqrt{3}a^2
\end{align*}
whence $\Per(\T) = 2\sqrt[4]{3}\sqrt{\abs{D}}$ proving \eqref{eq:Per 66}. In particular, we also obtain using the above formulas
\[  
    \frac{\abs{E_1}}{\abs{D}} > \frac{\frac{\sqrt{3}}{2}a^2}{3\sqrt 3a^2} = \frac 16,
\]
that clearly implies \eqref{eq:vol 66}. 
Notice that if we change $t$ and $s$ keeping $a=b$ and $t+s=a+b$ fixed,
the lattice does not change and it is hence equal to the lattice 
of the case $a=b=t=s$ which has two adjacent regular hexagons as 
a fundamental domain.
Finally, uniqueness up to isometry is clear.
\end{proof}
    
\begin{theorem}[optimal lattice for (3;9) configuration]\label{th:computation 39}
    Let $\T$ be a non-degenerate, isoperimetric periodic $2$-tiling of the plane.
    If $\T$ is a $(3;9)$ configuration,
    then the lattice has the same periodicity of a honeycomb, i.e.,
    $E_1$ is a Reauleaux triangle while $E_2$ 
    is obtained by removing three circular arcs from every other vertex 
    of a regular hexagon.     
    Moreover, we have
    \begin{gather}
        0 < \frac{\abs{E_1}}{\abs{E_1} + \abs{E_2}} < \frac{\pi}{\sqrt 3}-1,\label{eq:admissible 39}\\
        \Per(\mathbf T) = \sqrt{2}\sqrt{\pi-\sqrt 3} \sqrt{\abs{E_1}}
        + \sqrt[4]{12}\sqrt{\abs{E_1}+\abs{E_2}}.\label{eq:Per 39}
    \end{gather}
Given $\abs{E_1}$ and $\abs{E_2}$, the configuration is uniquely determined 
up to isometries.
\end{theorem}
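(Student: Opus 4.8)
The plan is to first distill from Theorem~\ref{th:different_pressures} (and its proof) an explicit finite-dimensional model for every $(3;9)$ configuration, then to reduce the optimization over the lattice to an elementary symmetric-function inequality, and finally to convert the optimal parameters into \eqref{eq:admissible 39}--\eqref{eq:Per 39}.

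First I would pin down the parametrization. By Theorem~\ref{th:different_pressures}, in a $(3;9)$ configuration the six flat edges of $E_2$ prolong to a hexagon $P$ which is a fundamental domain of $G$, with $P\setminus E_2$ consisting of three ``chips'', one at every other vertex of $P$, each chip lying inside a translate of the Reuleaux triangle $E_1$. The first point to establish is that all interior angles of $P$ equal $120$ degrees: the non-chipped vertices of $P$ are genuine vertices of type 2-2-2 of $\T$ (three flat edges of translates of $E_2$ meeting there), hence have angle $120$ degrees; a chipped vertex is the centre of a Reuleaux triangle assembled from the three chips coming from the three copies of $P$ meeting there, and since a Reuleaux triangle has a threefold rotational symmetry about its centroid these three chips are congruent and meet at $120$ degrees. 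Consequently opposite sides of $P$ are parallel and equal, say of lengths $a,b,c>0$; each chip is a congruent ``one-third Reuleaux wedge'', bounded by one arc and two radial segments of common length $r>0$ meeting at $120$ degrees, with $r$ the centroid-to-vertex distance of $E_1$ (so the arc radius is $r\sqrt3$ and the pressure is $p=1/(r\sqrt3)$); and the admissible range is $0<r<\min(a,b,c)$, since each edge of $P$ loses exactly one such segment near its chipped endpoint, leaving a flat edge of $E_2$ of length (side of $P$) $-\,r$.

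Next I would record the geometric data as functions of $(a,b,c,r)$. As in the proof of Theorem~\ref{th:honeycomb}, a hexagon with $120$-degree angles and consecutive sides $a,b,c,a,b,c$ has area $\tfrac{\sqrt3}{2}(ab+bc+ca)$; a Reuleaux triangle built on an equilateral triangle of side $r\sqrt3$ has area $\tfrac32(\pi-\sqrt3)r^2$ and perimeter $\pi\sqrt3\,r$. Hence $\abs{E_1}=\tfrac32(\pi-\sqrt3)r^2$, $\abs{E_1}+\abs{E_2}=|P|=\tfrac{\sqrt3}{2}(ab+bc+ca)$, and, since the boundary of $E_2$ consists of the three arcs (of total length $\Per(E_1)=\pi\sqrt3\,r$) together with the six flat edges (of total length $2(a+b+c)-6r$), one gets $\Per(\T)=\tfrac12\bigl(\Per(E_1)+\Per(E_2)\bigr)=(a+b+c)+(\pi\sqrt3-3)\,r$. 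Since $\abs{E_1}$ and $\abs{E_2}$ are prescribed, $r$ is then fixed by $r^2=\tfrac{2\abs{E_1}}{3(\pi-\sqrt3)}$ and the quantity $ab+bc+ca=\tfrac{2}{\sqrt3}(\abs{E_1}+\abs{E_2})$ is fixed, so minimizing $\Per(\T)$ is the same as minimizing $a+b+c$ with $ab+bc+ca$ held constant. By $(a+b+c)^2\ge 3(ab+bc+ca)$, with equality if and only if $a=b=c$, the optimum is the regular hexagon, i.e.\ the honeycomb lattice; the feasibility requirement $r<a=b=c=:\ell$ unravels, using $\tfrac{3\sqrt3}{2}\ell^2=\abs{E_1}+\abs{E_2}$, to exactly \eqref{eq:admissible 39} (and $\min(a,b,c)>r$ already forces $ab+bc+ca>3r^2$, so \eqref{eq:admissible 39} in fact holds for every $(3;9)$ configuration). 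Finally, substituting $3\ell=\sqrt[4]{12}\sqrt{\abs{E_1}+\abs{E_2}}$ and, via $\pi\sqrt3-3=\sqrt3(\pi-\sqrt3)$, $(\pi\sqrt3-3)\,r=\sqrt2\sqrt{\pi-\sqrt3}\sqrt{\abs{E_1}}$ yields \eqref{eq:Per 39}; uniqueness up to isometry follows because the minimizer forces $a=b=c$, whence $\ell$ and then $r$ are determined (and for a regular hexagon the choice of which alternating triple of vertices to chip is immaterial).

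The step I expect to be the real obstacle is the parametrization claim in the second paragraph: upgrading the qualitative description provided by Theorem~\ref{th:different_pressures} to the rigid four-parameter model, and in particular verifying that $P$ has all angles equal to $120$ degrees and that the three chips are congruent one-third Reuleaux wedges, so that the pressure is pinned to $r$ and the only remaining lattice freedom lives in $(a,b,c)$ with $ab+bc+ca$ fixed. Once this is in place, the optimization is the elementary inequality above and the remainder is routine algebra with the area and length formulas.
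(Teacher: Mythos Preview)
Your proof is correct and follows the same route as the paper's: both extend the flat edges of $E_2$ to a hexagon $P$, observe that $\Per(\T)$ decomposes as a hexagon-perimeter term plus a term depending only on $\abs{E_1}$, and then optimize the hexagon with its area fixed. The only differences are cosmetic: where the paper invokes the Lagrange-multiplier computation from Theorem~\ref{th:honeycomb}, you use the cleaner inequality $(a+b+c)^2\ge 3(ab+bc+ca)$, and your justification that the chipped vertices of $P$ have $120^\circ$ angles (via the $D_3$ symmetry of the Reuleaux triangle and the reflection symmetry of the type 1-2-2 vertex) is more explicit than the paper, which takes this as implicit from the proof of Theorem~\ref{th:different_pressures}.
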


\begin{proof}
    Let $E$ be the hexagon obtained by prolonging the six flat edges of $E_2$.
    This hexagon generates a periodic $1$-tiling $\T'$ with the same 
    lattice as $\T$.
    Notice, moreover, that $\Per(\T')$ is obtained from $\Per(\T)$
    by subtracting the perimeter of the triangular tile $E_1$ 
    and adding the length of a tripod on the vertices of $E_1$.
    At this point, we can repeat the proof of Theorem~\ref{th:honeycomb}
    because the length $\Per(\T)$ is given by the formula~\eqref{eq:hexagon}
    with an additional term which only depends on $\abs{E_1}$ and 
    has no variation with respect to variations of the lattice.  

    Hence we find that $E$ is a regular hexagon and $\T$ and $\T'$ 
    have the same lattice. Moreover, if $r>0$ is the radius of curvature 
    of the curved edges of $E_1$, and $\ell>0$ is the side of the hexagon $E$,
    using Lemma~\ref{lm:reauleaux} we have
    \begin{align*}
        \Per(\T) &= \pi r - \sqrt 3 r + 3\ell\\
        \abs{E_1} &= \frac{\pi - \sqrt 3} 2 r^2\\
        \abs{E_1} + \abs{E_2} &= \abs{E} = \frac{3\sqrt 3}2 \ell^2
    \end{align*}
    from which we deduce \eqref{eq:Per 39}.
    Condition~\eqref{eq:admissible 39} follows by requiring 
    the Reauleaux triangle to not touch the vertices of the hexagon
    which is $\frac{r}{\sqrt 3} < \ell$.
    Uniqueness follows from Theorem~\ref{th:different_pressures}
    since the lattice is a regular hexagon.
\end{proof}

\begin{figure}
\begin{tikzpicture}[scale=0.7]
    \draw[->](-4,0) -- (6,0);
    \draw[->](0,-3) -- (0,4);
    \fill (0,0) circle(1.8pt);
    \node at  (0,-0.3) [left]{$O$};

    
    \draw[thick](3,-1) arc (-10:10:5.75) node[right] {$A=(a,b)$} -- (3,1) arc (70:110:8.76) -- (-3,1) arc (170:190:5.75) -- (-3,-1) arc (-110:-70:8.76);
    \fill (3,1) circle(1.8pt);

    \draw(0,0) -- (6,4);
    \draw node at (6,4) [right] {$B=(u,v)$};
    \fill (6,4) circle(1pt);
    \draw(2,0) -- (6,4);
    \node at (2.2,0.2) [right]{$\alpha$};
    \fill (2,0) circle(1.8pt);
    \node at (1.9,0) [below]{$P$};

    \draw(2,0) -- (5,-3);
    \draw(0,0) -- (5,-3);
    \draw node at (5,-3) [right] {$B'=(u',v')$};
    \fill (5,-3) circle(1.8pt);
\end{tikzpicture}
    \caption{Notation used in configuration (4;8).}
    \label{fig:conti}
\end{figure}
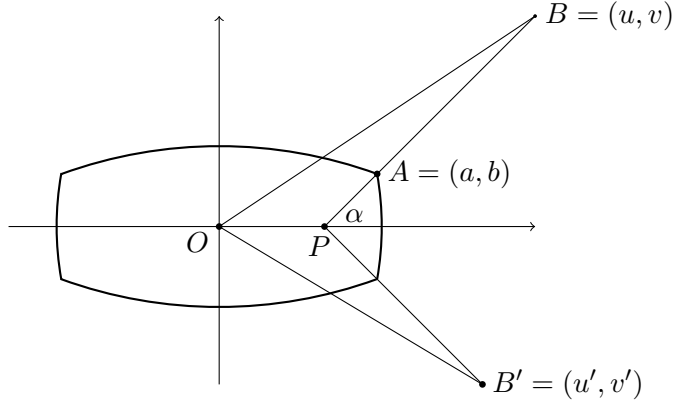
\begin{theorem}[optimal lattice for $(4;8)$ configuration]\label{th:computation 48}
Let $\T$ be a non-degenerate, isoperimetric periodic $2$-tiling of $\R^2$.
If $\T$ is a $(4;8)$ configuration, 
then the lattice admits a fundamental domain $D$ which is a square 
i.e., the lattice $G$ has two generators $g,g'\in G$ with equal length
the curvilinear rectangle, $E_1$ is a curvilinear square and the chipped parallelogram $E_2$ is a chipped square.

Moreover, we have 
\begin{gather}
    0<\frac{\abs{E_1}}{\abs{E_1}+\abs{E_2}} < \frac{(2+\sqrt 3)\left( 1-\sqrt 3 + \frac{\pi}{3}\right)}{8},\label{eq:vol 48}\\
    \Per(\T) = 2\sqrt{\frac \pi 3 + 1 - \sqrt 3} \sqrt{\abs{E_1}} + 2\sqrt{\abs{E_1}+\abs{E_2}}.\label{eq:Per 48}
\end{gather}
Given $\abs{E_1}$ and $\abs{E_2}$, the configuration is uniquely determined 
up to isometries.
\end{theorem}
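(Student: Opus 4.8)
The plan is to exploit the rigid combinatorial structure of the $(4;8)$ configuration already established in Theorem~\ref{th:different_pressures}: the chipped parallelogram $E_2$ has four flat edges whose prolongations form a parallelogram $P$ which is a fundamental domain of $G$, and $E_1$ is a curvilinear rectangle with four circular arcs of curvature $p=1/r$. So the free parameters are the two side lengths of $P$, an angle, and the radius $r$ of the arcs; the areas $\abs{E_1}$ and $\abs{E_1}+\abs{E_2}=\abs{P}$ are two constraints. First I would set up explicit coordinates as in Figure~\ref{fig:conti}: place one vertex of $P$ at the origin, let the generators be $g=(a,b)$ and (say) $g'$ determined by a second direction, and write the curvilinear rectangle $E_1$ as the region bounded by four $60^\circ$ arcs of radius $r$ centered appropriately, meeting the four flat edges of $E_2$ tangentially at $120^\circ$ vertices. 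A short trigonometric computation (this is where Lemma~\ref{lm:quadrangular} and the Reuleaux/arc length lemmas enter) gives $\Per(\T)$, $\abs{E_1}$, and $\abs{P}$ as functions of these parameters.

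Next I would run the Lagrange-multiplier / first-variation argument, exactly as in the proofs of Theorems~\ref{th:computation 66} and~\ref{th:computation 39}: fix $\abs{E_1}$ and $\abs{P}$ and minimize $\Per(\T)$ over the remaining degrees of freedom. The key observation, parallel to the $(3;9)$ case, is that the contribution of the curvilinear rectangle to the perimeter (the excess of its boundary length over the length of the associated cross/tetrapod of straight segments) depends only on $\abs{E_1}$ and $r$ in a scale-invariant way, while the straight part of the interface is governed by $P$. Stationarity should force the parallelogram $P$ to be a square (the angle is $90^\circ$ and the two side lengths are equal) and force the curvilinear rectangle to be a curvilinear square (all four arcs equal, so $E_1$ has the symmetries of a square), by the same convexity mechanism that gave $a=b$ before: for fixed total straight-edge length the enclosed area of a parallelogram is maximized by the square, and any asymmetry in $E_1$ can be symmetrized to decrease perimeter at fixed area. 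Feeding $P$ a square of side $\ell$ and $E_1$ a curvilinear square of arc radius $r$ back into the formulas yields
\[
  \Per(\T) = \pi r - \sqrt 3\, r + 4\ell,\qquad
  \abs{E_1} = \Bigl(\tfrac\pi3 + 1 - \sqrt 3\Bigr) r^2,\qquad
  \abs{E_1}+\abs{E_2} = \ell^2,
\]
(modulo getting the constants exactly right in the area of the curvilinear square, which is the routine part), and eliminating $r,\ell$ gives precisely~\eqref{eq:Per 48}.

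Finally, the admissibility bound~\eqref{eq:vol 48} comes from the geometric constraint that the curvilinear square $E_1$ must actually fit inside the square fundamental domain without its arcs reaching the corners of $P$ — i.e.\ the chipped square $E_2$ must remain nondegenerate — which translates into an inequality between $r$ and $\ell$; substituting into the area formulas produces the stated explicit ratio. Uniqueness up to isometry follows because, once $\abs{E_1}$ and $\abs{E_2}$ are fixed, $r$ and $\ell$ are determined, and Theorem~\ref{th:different_pressures} already gives uniqueness of the $(4;8)$ configuration for a fixed lattice.

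I expect the main obstacle to be the stationarity computation showing that the optimal parallelogram is a \emph{square} rather than a general rhombus or rectangle: unlike the $(6;6)$ and $(3;9)$ cases, where the ambient honeycomb symmetry does much of the work, here one must carefully check that varying the angle of $P$ (while compensating side lengths to keep both $\abs{E_1}$ and $\abs{P}$ fixed) strictly increases the perimeter, and simultaneously that $E_1$ cannot gain from becoming a non-square curvilinear rectangle. Keeping the four tangency/$120^\circ$ conditions consistent under these variations — so that one is always comparing genuine $(4;8)$ tilings — is the delicate bookkeeping; once that is in place the rest is the same Lagrange-multiplier pattern used twice already above.
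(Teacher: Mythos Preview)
Your outline captures the broad shape of the argument and correctly isolates the hard step, but there is a genuine gap in how you propose to handle it. The decoupling you invoke from the $(3;9)$ case --- ``the curved contribution depends only on $\abs{E_1}$, the straight part only on $P$'' --- fails here: the angle of the parallelogram $P$ is \emph{not} a free parameter but is forced by the $120^\circ$ condition to equal $2\alpha = \tfrac{\pi}{3}+2\theta$, where $\theta$ is the shape parameter of the curvilinear rectangle $E_1$. So varying the angle of $P$ and symmetrizing $E_1$ are the \emph{same} variation, not two independent ones, and your ``Lagrange multipliers as in $(6;6)$ and $(3;9)$'' would have to be carried out in the coupled variables $(r,\theta,\ell,\ell')$. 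The $\ell=\ell'$ step (rhombus) does fall out easily, essentially by AM--GM as you say; but the remaining step $\theta=\tfrac{\pi}{12}$ does not follow from a clean first-order condition, and ``stationarity should force the square'' is where your proposal stops being a proof.

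The paper does not use Lagrange multipliers for this step. Instead it builds an explicit competitor: replace $E_1$ by the curvilinear \emph{square} $E_1'$ with $\abs{E_1'}=\abs{E_1}$ and replace $G$ by the square lattice $G'$ with the same covolume, then show directly that $\Per(\T')<\Per(\T)$ whenever $a\neq b$. The key geometric input is that the level curves of $r(a,b)=2\sqrt{a^2+\sqrt 3\,ab+b^2}$ are ellipses with axis $y=x$, while the level curves of $\abs{D}=8uv$ are hyperbolas with the same axis, so the signed distance between a point on the ellipse and a point on the hyperbola is minimized along the diagonal --- this is what beats the general $(a,b,u,v)$ configuration by the diagonal one $(c,c,w,w)$. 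That comparison, not a stationarity computation, is the missing idea. (Incidentally, your displayed formula $\Per(\T)=\pi r-\sqrt 3\,r+4\ell$ imports the Reuleaux constants; the curvilinear rectangle has $\Per(E_1)=\tfrac{2}{3}\pi r$ by Lemma~\ref{lm:quadrangular}, independently of $\theta$.)
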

\begin{proof}
Consider a system of coordinates, as in Figure~\ref{fig:conti}, with the origin in the center of the curvilinear rectangle $E_1$,
axes parallel to the axes of $E_1$ and such that the first coordinate is along 
the longest axis. 
In these coordinates the vertices of $E_1$ are $(\pm a,\pm b)$
for some $a>b>0$.
The coordinates of the midpoints of the two flat edges emitted 
from the vertices $(a,\pm b)$ are $(u,v)$ and $(u',v')$ for some $u,v,u',-v'>0$
so that $g=2(u,v)$, $g'=2(u',v')$ are two generators of the lattice $G$.
Let $0<\theta\le \frac{\pi}{12}$ 
be the angle between the arc and the chord of the edge with vertices $(a,\pm b)$, see Figure~\ref{fig:rectangle}.
Clearly, by the 120-degrees condition, 
the lines containing the flat edges emitted from the vertices $(a,\pm  b)$  
define the same angle $\alpha$ with the $x$-axis, and an easy computation gives 
$\alpha=\frac \pi 6 + \theta$.
Let $r$ be the radius of curvature of the region $E_1$.
Then:
\begin{align*}
    a &= r \sin (\pi/6-\theta) &
    b &= r \sin \theta\\
    u &= a + \ell\cos\alpha &
    v &= b + \ell\sin\alpha\\
    u' &= a + \ell'\cos\alpha &
    v' &= -b - \ell'\sin\alpha\\    
\end{align*}
where $\ell$ is the distance from $A=(a,b)$ to $B=(u,v)$ and $l'$ is the distance from $(a,-b)$ to $(u',v')$. Moreover
\begin{align*}
    \abs{D} &= 4 \det \begin{pmatrix} u' & u \\ v' & v \end{pmatrix}\\
    &= 4 ((a+\ell\cos \alpha)(b+\ell'\sin \alpha)+(a+\ell'\cos \alpha)(b+\ell\sin \alpha))\\
    &= 8 ab + 4 (a\sin \alpha +b\cos\alpha)(\ell+\ell') + 8\ell \ell'\sin\alpha\cos\alpha\\
    &\eqcolon A(\ell,\ell')
\end{align*}
while
\[
    \Per(\T) = \Per(E_1) + 2(\ell + \ell')
    \eqcolon P(\ell,\ell')
\]

\emph{Step 1.} We are going to prove that $\ell=\ell'$ which means 
that the fundamental domain $D$ is a rhombus.

Suppose by contradiction that $\ell \neq \ell'$.
If we take $\bar \ell = \frac{\ell + \ell'}{2}$
we have $P(\bar \ell,\bar \ell) = P(\ell,\ell')$ and 
$A(\bar \ell, \bar \ell) > A(\ell,\ell')$
because $\bar \ell^2 > \ell \ell'$.
Hence there exists $\lambda<1$ such that $A(\lambda \bar \ell,\lambda \bar \ell)=A(\ell,\ell')$ 
while $P(\lambda \bar \ell,\lambda \bar \ell) < P(\ell,\ell')$. 
In this process $\abs{E_1}$ is kept fixed, hence we have a contradiction 
with the minimality of $\Per(\T)$ with respect to a change of the lattice.
The condition $\ell=\ell'$ means that $D$ is a rhombus, as stated.

\emph{Step 2.} We are going to prove that $a=b$ which means that $E_1$ is a 
curvilinear square, $E_2$ is a chipped square and the fundamental domain $D$ 
is a square. 
The configuration is determined by the parameters $a,b,u,v$ where $(a,b)$
determines the shape of $E_1$ and $(u,v)$ determines the lattice $G$.
There is a compatibility condition between $a,b,u,v$ which is given by the
fact that the segment from $(a,b)$ to $(u,v)$ must define equal angles 
of 120 degrees with the two circular edges of $E_1$ joining in $(a,b)$.
By Lemma \ref{lm:quadrangular}
we have:
\begin{align*}
    \abs{D} &= 8 uv\\
    \abs{E_1} &= r^2\enclose{1-\sqrt 3 + \frac \pi 3 - 2 \sin^2 (\theta - \frac{\pi}{12})}\\
    \Per(\T) &= \frac 2 3 \pi r + 4\sqrt{(u-a)^2+(v-b)^2}\\
    r &= r(a,b) = 2\sqrt{a^2+\sqrt 3 ab + b^2}.
\end{align*}
Let $m=\min\ENCLOSE{\abs{E_1},\abs{D}-\abs{E_1}}$
and let $E_1'$ be the curvilinear square with angles of 120 degrees and vertices 
in the points $(\pm c,\pm c)$
with $c>0$ appropriately chosen so that $\abs{E_1'}=m$ i.e., 
by Lemma~\ref{lm:quadrangular},
\[
    r^2(c,c)\enclose{1-\sqrt 3 + \frac \pi 3} 
    = 4c^2 (2+\sqrt 3) \cdot \enclose{1-\sqrt 3 + \frac \pi 3}
    = m.
\]
Take $w=\sqrt{uv}$ and consider the group $G'$ generated by $2(w,w)$ and $2(w,-w)$
which has a fundamental domain $D'$ which is a square with vertices 
$(0,0), 2(w,w), 2(w,-w), (4w,0)$ and the same area as $D$:
$\abs{D'} = 8w^2 = 8uv$.
Notice that $\abs{E_1'}$ strictly contains a square of side $2c$, 
hence $\abs{E_1'}>4c^2$ and since $\abs{E_1'}=m$ 
is smaller than half the area of $D$ 
we obtain that $c<w$.
So, the translations $E_1'+g'$ with $g'\in G'$ are pairwise disjoint and we can define 
a the tile 
\[
    E_2'=D'\setminus \bigcup_{g'\in G'}(E_1'+g') 
\]
so that $E_1'$ and $E_2'$ generate a tiling $\T'$ of type $(4;8)$
which is a competitor to the tiling $\T$ because 
either $\abs{E_1'}=\abs{E_1}$ and $\abs{E_2'}=\abs{E_2}$ or
$\abs{E_1'}=\abs{E_2}$ and $\abs{E_2'}=\abs{E_1}$.

We are going to show that if $a>b$ then $\Per(\T')<\Per(\T)$.

Notice that the level curves of the function $r(x,y)$ are ellipses centred in the origin 
and with axes $y=\pm x$.
Take $d>0$ such that $r(d,d) = r(a,b)$ and notice that 
\[
    r^2(c,c)\enclose{1-\sqrt 3 + \frac \pi 3}
    = \abs{E_1'} 
    \le \abs{E_1}
    < r^2(a,b) \enclose{1-\sqrt 3 + \frac \pi 3} 
\]
hence $r(c,c)<r(a,b)=r(d,d)$ and $c<d$.
Define 
\[
  P(x,z) 
    = \frac 2 3 \pi r(x,x) + 4\sqrt 2 (z-x)
    = \enclose{\frac 4 3 \pi \sqrt{2+\sqrt 3}-4\sqrt 2} x + 4\sqrt 2 z
\]
and check that this function is increasing in $x$ so that 
$\Per(\T') = P(c,w) < P(d,w)$.
Since $(d,d)$ and $(a,b)$ are two points on the same level curve of 
$r(x,y)$ which is an ellipse $E$ while $(w,w)$ and $(u,v)$ 
are on the same level curve of the function $xy$ which is a hyperbola $H$,
and clearly the smaller signed distance between $E$ and $H$ is obtained
by taking the points on the axis $y=x$, 
we have that
\[
    \sqrt{(u-a)^2+(v-b)^2} 
    \ge \sqrt 2 \abs{w-d} \ge w-d.
\]
This finally gives
\[
    \Per(\T) 
        = \frac 2 3 \pi r(a,b) + 4 \sqrt{(u-a)^2+(v-b)^2}
        \ge P(d,w) > P(c,w) = \Per(\T')
\]
as claimed.

So we have $a=b$ and $u=v$. Hence
\begin{align*}
    \Per(\T) &= P(a,u) = \enclose{\frac 4 3 \pi \sqrt{2+\sqrt 3} - 4 \sqrt 2} \cdot a + 4\sqrt 2 u\\
    \abs{E_1} &= a^2 (2+\sqrt 3)\enclose{1-\sqrt 3 + \frac \pi 3}\\
    \abs{D} &= 8u^2
\end{align*}
Hence
(the algebraic simplification requires some work, but can easily be checked numerically)
\begin{align*}
    \Per(\T) 
        &= \frac{2 \pi - 6\sqrt{4-2\sqrt 3}}{\sqrt{9 + 3\pi - 9\sqrt 3}} \sqrt{\abs{E_1}} 
        + 2\sqrt{\abs{D}}\\
        &= 2\sqrt{\frac \pi 3 + 1 - \sqrt 3} \sqrt{\abs{E_1}} + 2\sqrt{\abs{D}}
\end{align*}
proving \eqref{eq:Per 48}. 
Furthermore, \eqref{eq:vol 48} follows by the above formulas for $\abs{E_1},\abs{D}$ and the observation that when $u=a$ the configuration (4;8) is not anymore admissible.
Finally, uniqueness follows from Theorem~\ref{th:different_pressures}.
\end{proof}
We can now finally prove our main result around the isoperimetric profile function. The following directly implies our second main result Theorem \ref{thm:main 2}.
\begin{theorem}[isoperimetric profile]\label{thm:isoperimetric_profile}
Let $\T$ be an isoperimetric $2$-tiling of the plane
and denote by $E_1$ and $E_2$ two generators of $\T$. 
Let $x=\frac{\abs{E_1}}{\abs{E_1}+\abs{E_2}}$ so that  $\mathcal I(x) = \frac{\Per(\mathbf T)}{\sqrt{\abs{E_1}+\abs{E_2}}}$. 
Define:
\begin{align*}
m_1 &\coloneq \sqrt 2\sqrt{\pi -\sqrt 3} & q_1 &\coloneq \sqrt[4]{12} \\
m_2 &\coloneq 2\sqrt{\frac{\pi}{3}+1-\sqrt 3} & q_2 &\coloneq 2\\
 & & q_3 &\coloneq 2\sqrt[4]{3}
\end{align*}
and 
\[
x_1 \coloneq \frac{(q_2-q_1)^2}{(m_1-m_2)^2} \approx 0.062
\qquad 
x_2 \coloneq \frac{(q_3-q_2)^2}{m_2^2} \approx 0.317.
\]
Then we have (see Figure~\ref{fig:profile}):
\begin{enumerate} 
    \item[(0)] if $x=0$ then $\mathbf T$ is the Honeycomb tiling and $\mathcal I(0) = f(0)=q_1$;
    \item if $0 <x < x_1$ then $\T$ is a $(3;9)$ configuration and $\mathcal I(x) = m_1\sqrt x + q_1$;
    \item if $x=x_1$ then $\T$ can either be a $(3;9)$ configuration or a $(4;8)$ configuration and $\mathcal I(x_1) = m_1\sqrt x + q_1 = m_2\sqrt x + q_2$;
    \item if $x_1 < x < x_2$ then $\T$ is a $(4;8)$ configuration and $\mathcal I(x) = m_2\sqrt x + q_2$;
    \item if $x=x_2$ then $\T$ can either be a $(4;8)$ configuration or a $(6;6)$ configuration and $\mathcal I(x) = m_2\sqrt x + q_2 = q_3$;
    \item if $x_2 < x \le \frac 12$ then $\T$ is a $(6;6)$ configuration and $\mathcal I(x) = q_3$;
    \item if $x \in [\frac 12,1)$ then $\mathcal I(x) = \mathcal I(1-x)$.
\end{enumerate}
Finally, if $x \notin \{ x_1,x_2,1-x_2,1-x_1\}$ then 
isoperimetric $2$-tilings are uniquely determined up to isometries.
\end{theorem}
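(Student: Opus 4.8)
The plan is to combine the three configuration-specific computations (Theorems~\ref{th:computation 66}, \ref{th:computation 39}, \ref{th:computation 48}) together with the honeycomb baseline (Theorem~\ref{th:honeycomb}) into a single optimization over the three available configuration types for each fixed volume ratio $x$. By Theorem~\ref{th:main 1} every isoperimetric $2$-tiling is one of the three types $(3;9)$, $(4;8)$, $(6;6)$, so $\mathcal I(x)$ is the pointwise minimum of the three functions
\[
g_1(x) = m_1\sqrt x + q_1,\qquad g_2(x) = m_2\sqrt x + q_2, \qquad g_3(x) = q_3,
\]
over those $x$ for which each configuration is \emph{admissible}. The first task is to rewrite the perimeter formulas \eqref{eq:Per 39}, \eqref{eq:Per 48}, \eqref{eq:Per 66} after dividing by $\sqrt{\abs{E_1}+\abs{E_2}}$ and substituting $x = \abs{E_1}/(\abs{E_1}+\abs{E_2})$: for $(3;9)$ one gets $\mathcal I_{(3;9)}(x) = m_1\sqrt x + q_1$ directly from \eqref{eq:Per 39}; for $(4;8)$ one gets $m_2\sqrt x + q_2$ from \eqref{eq:Per 48}; for $(6;6)$ the perimeter $2\sqrt[4]{3}\sqrt{\abs{E_1}+\abs{E_2}}$ gives the constant $q_3$. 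I would also record the admissibility ranges: \eqref{eq:admissible 39} gives $(3;9)$ for $x \in (0, \pi/\sqrt 3 - 1)$, \eqref{eq:vol 48} gives $(4;8)$ for $x$ up to the stated algebraic bound, and \eqref{eq:vol 66} gives $(6;6)$ for $x \in (1/6, 5/6)$.

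Next I would analyze the pairwise crossings. The three functions are each increasing or constant in $\sqrt x$, so comparisons reduce to solving $m_1 \sqrt x + q_1 = m_2 \sqrt x + q_2$ and $m_2 \sqrt x + q_2 = q_3$. Since $m_1 > m_2 > 0$ and $q_1 < q_2 < q_3$ (this requires checking numerically: $q_1 = \sqrt[4]{12}\approx 1.861$, $q_2 = 2$, $q_3 = 2\sqrt[4]{3}\approx 2.632$, $m_1 \approx 1.889$, $m_2 \approx 1.533$), the first crossing is at $\sqrt{x_1} = (q_2-q_1)/(m_1-m_2)$, i.e. $x_1 = (q_2-q_1)^2/(m_1-m_2)^2$, and the second at $\sqrt{x_2} = (q_3-q_2)/m_2$, i.e. $x_2 = (q_3-q_2)^2/m_2^2$. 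One must verify $0 < x_1 < x_2 < 1/2$ and that at these crossing points both relevant configurations are admissible (so that at $x_1$ both $(3;9)$ and $(4;8)$ are genuinely realized, and at $x_2$ both $(4;8)$ and $(6;6)$ are), while for $x \in (0,x_1)$ the $(3;9)$ value is strictly smallest, for $x \in (x_1,x_2)$ the $(4;8)$ value is strictly smallest, and for $x \in (x_2,1/2]$ the constant $q_3$ is smallest. The symmetry $\mathcal I(x) = \mathcal I(1-x)$ is immediate from exchanging the roles of $E_1$ and $E_2$. The concavity of $x \mapsto (\mathcal I(x) - q_1)^2$ then follows by inspecting the three pieces: on $(0,x_1)$ it equals $(m_1\sqrt x)^2 = m_1^2 x$ (linear), on $(x_1,x_2)$ it equals $(m_2\sqrt x + q_2 - q_1)^2$ which expands to $m_2^2 x + 2 m_2(q_2-q_1)\sqrt x + (q_2-q_1)^2$ (concave, since $\sqrt x$ is concave and the coefficient is positive), and on $(x_2,1/2]$ it is the constant $(q_3 - q_1)^2$; continuity of the derivative at the breakpoints follows from the $C^0$-matching plus the fact that a piecewise-concave function that is continuous and has matching one-sided values is concave provided the slopes are non-increasing across breakpoints, which I would check by hand.

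The main obstacle is the $(4;8)$ case, specifically establishing that $\mathcal I(x) \le m_2\sqrt x + q_2$ on the whole interval $(x_1,x_2)$ \emph{and} that this is not beaten by $(3;9)$ or $(6;6)$ there. The subtlety is that these are lower bounds only among tilings of each fixed type relative to the optimal lattice; to conclude $\mathcal I(x)$ equals the minimum of the three I must know (from Theorem~\ref{th:main 1}) that no other configuration exists, and I must take care that the $(6;6)$ admissibility lower bound $1/6 > x_2 \approx 0.317$ is false — wait, $1/6 \approx 0.167 < 0.317$, so $(6;6)$ \emph{is} admissible at and beyond $x_2$, which is exactly what we need; conversely I should double-check that $(6;6)$ is \emph{not} admissible below $1/6$ and that $1/6 < x_1$ fails, meaning that in $(x_1,x_2)$ all three types compete and one must verify $m_2\sqrt x + q_2 < q_3$ there (true since $x < x_2$) and $m_2\sqrt x + q_2 < m_1 \sqrt x + q_1$ there (true since $x > x_1$). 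The algebraic verification that $x_1, x_2$ have the stated decimal approximations and lie strictly inside $(0,1/2)$, together with the sign checks $q_1 < q_2 < q_3$ and $m_1 > m_2$, is routine but load-bearing, so I would state it explicitly. Finally, uniqueness away from $\{x_1, x_2, 1-x_1, 1-x_2\}$ follows because on each open sub-interval exactly one configuration type achieves the minimum and each type is unique up to isometry by the respective theorem (for $(3;9)$ using that the optimal lattice is a regular hexagon, per Theorem~\ref{th:computation 39}, so Theorem~\ref{th:different_pressures} gives uniqueness), whereas at the four breakpoints two distinct configuration types coexist.
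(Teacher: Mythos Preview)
Your approach is correct and essentially the same as the paper's: invoke the classification (Theorem~\ref{th:main 1}) to restrict to the three configuration types, read off the normalized perimeters $g_1, g_2, g_3$ from Theorems~\ref{th:computation 39}, \ref{th:computation 48}, \ref{th:computation 66}, and compare them pointwise to locate the crossings at $x_1, x_2$; uniqueness then comes from the uniqueness clauses in those same theorems. Your write-up is in fact more thorough than the paper's own rather terse proof (which does not spell out the admissibility checks), though note that your decimal values for $m_1$ and $m_2$ are off (the correct ones are $m_1\approx 1.679$, $m_2\approx 1.123$; the sign relations $m_1>m_2$ and $q_1<q_2<q_3$ you rely on still hold), and the concavity discussion belongs to Theorem~\ref{thm:main 2} rather than to this statement.
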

\begin{proof}
    First, we notice that (0),(6) are straightforward. Indeed, (0) is directly given by Theorem \ref{th:honeycomb} while (6) is clear by the very definition of $\mathcal I(x)$.  
    Observe also that $x_1,x_2$ can be readily computed by solving
    $m_1\sqrt{x} + q_1 = m_2 \sqrt{x} + q_2$ and $m_2\sqrt{x}+q_2=q_3$.
    We can infer that:
    \begin{enumerate}
        \item[(1)] holds by Theorem \ref{th:computation 39} since $\mathcal I(x) = m_1\sqrt x + q_1$ for $x \in (0,x_1]$ (observe $x_1 \approx 0.062< \frac{\pi}{\sqrt 3}-1 $); 
        \item[(3)] holds by Theorem \ref{th:computation 48} since $\mathcal I(x)=m_2\sqrt x + q_2$ for $x \in [x_1,x_2]$. In particular, (2) also holds;
        \item[(5)] holds by Theorem \ref{th:computation 66} since $\mathcal I(x)=q_2$ if $x \in \left[x_2,\frac 12\right]$. In particular, (4) also holds.
    \end{enumerate}
    Finally, uniqueness follows by the uniqueness stated in Theorems~\ref{th:computation 39}, \ref{th:computation 48} and \ref{th:computation 66}.
\end{proof}
\section{Open problems}\label{sec:open problems}
We conclude this note by raising some natural open problems related to our investigation.
\subsection{Explicit profile $\mathcal I_G$}
Our main result in Theorem \ref{thm:main 2} provides an explicit description of the profile $\mathcal I(\cdot)$. 
Recall that the latter is obtained by minimizing the isoperimetric profile function $\mathcal I_G(\cdot)$ among all lattices $G$. 
However, in Theorem~\ref{th:main 1}, we also achieve classification results for isoperimetric tilings relative to a lattice $G$. 
In particular, we expect a relation between the area $x \in (0,\frac 12)$ and the lattice $G$ describing when the configurations (3;9), (4;8), (6;6) are isoperimetric. 
In this fixed lattice scenario and especially for the configuration (4;8), the computations carried in the proof of Theorem \ref{th:computation 48} become quite implicit, suggesting that this relation could be hard to achieve. It is then natural to raise the following problem:
\begin{open}\label{prob:G exists}
    Are there lattices $G$ in $\R^2$ such that no periodic $2$-tilings isoperimetric relative to $G$ are of type (4;8)?  
\end{open}
Understanding the above problem seems to be a necessary step to study the isoperimetric profile function relative to a lattice. The challenging part is to understand for which area a configuration (4;8) is admissible depending on the fixed lattice. We raise it as a second problem.
\begin{open}\label{prob:IG explicit}
    Given any lattice $G$ in $\R^2$, it is possible to give an explicit description of $\mathcal I_G(x)$ for $x\in [0,1]$?
\end{open}
\subsection{Isoperimetric $N$-tilings}
Given $N\in\N$, it would be interesting to understand to what extent our analysis generalizes to the case of isoperimetric $N$-tilings in $\R^2$. In some cases, there are natural candidates for minimizers as we comment here.

By \cite[Theorem 5.2]{NobiliNovaga24}, we know that a periodic tiling composed by $N$ adjacent hexagons with almost the same amount of area is isoperimetric. Again, we stress that this was achieved by a qualitative argument and we do not know how close the areas should be for this to happen. 

Furthermore, if $N=3$ with two cells having very small areas compared to the third one, a natural candidate is obtained by adding to the configuration $(3;9)$ a second Reuleaux triangle on an empty vertex of the regular hexagon. 

Finally, a combination of these situations might also occur when $N$ cells have almost the same area while other $2N$ cells or fewer have smaller areas close to zero. 
In this case, a natural candidate is the tiling composed of $N$ adjacent regular Hexagons and $2N$ Reuleaux triangles on the vertices.
\begin{figure}
\includegraphics[width=0.5\textwidth]{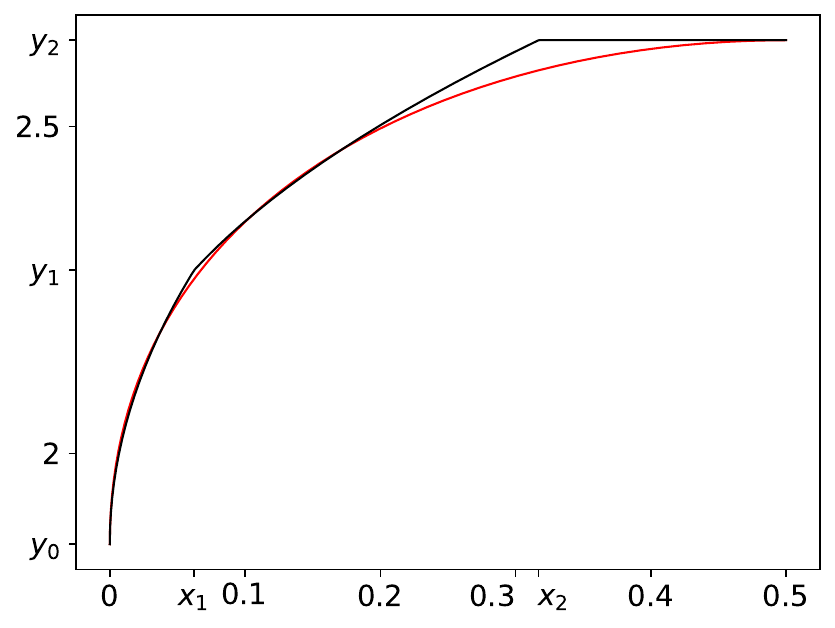}%
\includegraphics[width=0.5\textwidth]{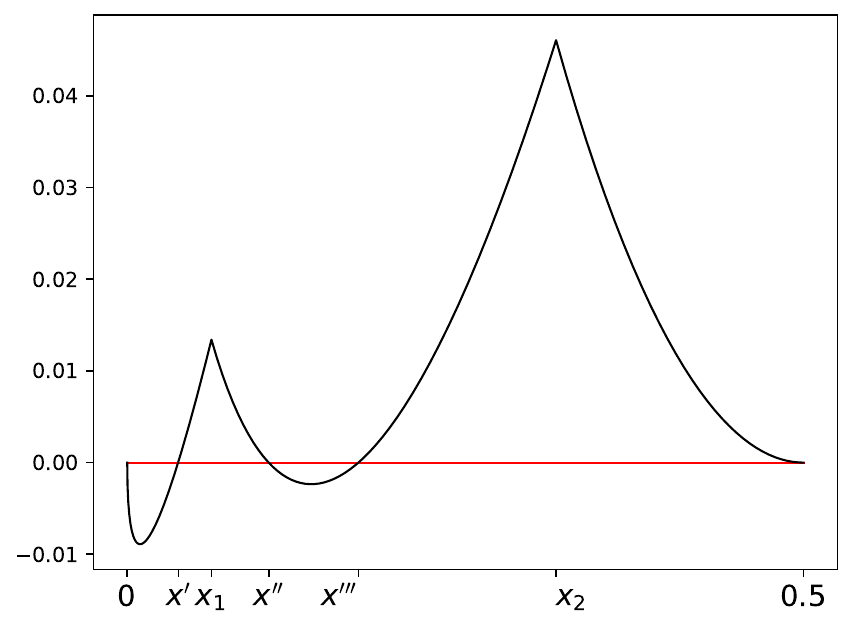}
    \caption{Left: the isoperimetric profile $\mathcal I(x)$ of the periodic $2$-tiling, compared with the asymptotic profile $\mathcal J(x)$, see~\eqref{eq:kelvin}, of a non-periodic tiling composed of hexagons with the same areas.
Right: the difference $\mathcal I(x) - \mathcal J(x)$, highlighting the intervals where $\mathcal I(x) < \mathcal J(x)$.}
\label{fig:comparison} 
\end{figure}
\subsection{Kelvin problem with unequal cells}
Recall that the Kelvin problem asks for a partition of the Euclidean space into cells of equal volume and with minimal interface area. In dimension $d=2$, the Honeycomb tiling was conjectured to be minimal and this has been affirmatively proved by Hales in \cite{Hales01}. Minimality, for instance, could be thought locally under compact perturbations as in the following definition.
\begin{definition}\label{def:constrained local minimizer}
Let $(E_i)_{i \in I}$ be a partition up to negligible sets of $\R^d$ with $|E_i|<\infty$ for all $i \in I$. 
We say that $(E_i)$ is a volume constrained local minimizer for the perimeter,
shortly local minimizer,
provided for any ball $B_r\subset\R^d$ of radius $r>0$ with 
\[
    \sum_{i\in I}\Per(E_i,B_r)<\infty,
\]
and for all other partitions $(F_i)_{i \in I}$ satisfying $E_i\triangle F_i\Subset B_r$ and $|E_i|=|F_i|$ for all $i \in I$, it holds
\[
    \sum_{i\in I}\Per(E_i,B_r) \le \sum_{i\in I} \Per(F_i,B_r).
\]
\end{definition}
In the Kelvin problem, the volumes of the partitions $(E_i),(F_i)$ are kept fixed and, for simplicity, equal to one. However, given the results of these notes, a natural problem would be to investigate  the Kelvin problem 
for partitions which have two different assigned volumes.

At least in the planar case, it is reasonable to ask if the partition $(E_i)$ of $\R^2$ 
induced by an isoperimetric periodic tiling satisfying $\abs{E_{2i}}=x$,
$\abs{E_{2i+1}}=1-x$
are local minimizers for the perimeter.
It is easy to understand that a possible competitor $(F_i^r)$ on a ball $B_r$ 
is obtained by subdividing the ball into two regions 
of measure proportional to $x$ and $1-x$  and fill each area 
with a honeycomb made of hexagons of area $x$ in the first region 
and area $1-x$ on the second one. One would have
\begin{equation}\label{eq:kelvin}
  \lim_{r\to+\infty}\frac{\sum_{i\in I}\frac 12 \Per(F_i^r,B_r)}{\pi r^2} = \sqrt[4]{12}\cdot(\sqrt{x}+\sqrt{1-x}) \eqcolon \mathcal J(x).
\end{equation}
In Figure~\ref{fig:comparison} we have superimposed the plot of $\mathcal J(x)$ with $\mathcal I(x)$.
One can notice that $\mathcal I(x)=\mathcal J(x)$ only for 
$x\in\{0, x', x'', x''',1\}$
with $x'\approx 0.038$, $x''\approx 0.105$ and $x'''\approx 0.171$
and $\mathcal I(x) \le \mathcal J(x)$ for $x\in [0,x']\cup[x'',x''']$.
Hence for all other values of $x$, 
the isoperimetric periodic tiling is not a local minimizer.
This might suggest that minimizers exist only for some choices of the assigned areas, for example when we have 
the configurations $(3;9)$ and $(4;8)$. In particular, a natural question is understanding if our configurations are local minimizers in the range where they are better than the tiling with regular hexagons.

\appendix
\section{Useful geometric computations}
In this appendix, we collect useful results and key computations around planar curvilinear polygons. These will be needed to analyze our candidate minimizers.

\begin{lemma}[turning angle]
    \label{lm:turning}
    Let $C$ be a simply connected curvilinear polygon delimited by edges which are either circular arcs 
    or straight segments joining in the vertices with equal angles of 120 degrees.
    Let $\alpha_1,\dots,\alpha_n$ be the angle spanned by each edge with $\alpha_k>0$ if the edge is convex,
    $\alpha_k<0$ if the edge is concave and $\alpha_k=0$ if the edge is flat.
    Then 
    \begin{equation}\label{eq:turning}
      \sum_{k=1}^n \alpha_k = (6-n)\cdot 60^\circ.
    \end{equation}
    \end{lemma}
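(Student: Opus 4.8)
The plan is to obtain \eqref{eq:turning} from the theorem of turning tangents (Hopf's Umlaufsatz) applied to the Jordan curve $\partial C$. First I would orient $\partial C$ so that it is traversed counterclockwise, with the interior of $C$ on the left. Since $\partial C$ is a piecewise-$C^1$ simple closed curve whose corners are all genuine corners (the interior angles equal $120^\circ$, strictly between $0^\circ$ and $180^\circ$, so there are no cusps), the Umlaufsatz asserts that the total rotation of the unit tangent vector, accumulated along the smooth arcs together with the jumps at the vertices, equals exactly $360^\circ$.

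I would then split this total into its two parts. At each of the $n$ vertices the interior angle is $120^\circ$, so the exterior angle (the jump in tangent direction) is $180^\circ-120^\circ=60^\circ$, and since $120^\circ<180^\circ$ this is a left turn, contributing $+60^\circ$; the vertices thus contribute $n\cdot 60^\circ$ in total. Along a smooth edge the rotation of the tangent equals the signed angle $\alpha_k$: a flat edge contributes $0$; a circular arc convex for $C$ has its center on the same side of the arc as $C$, i.e.\ on the left when $\partial C$ is traversed counterclockwise, so the tangent rotates by $+\abs{\alpha_k}$; a concave arc has its center on the opposite side, so the tangent rotates by $-\abs{\alpha_k}$. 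In every case the edge contributes $\alpha_k$, so the smooth parts contribute $\sum_{k=1}^n\alpha_k$. Adding the two contributions gives $\sum_{k=1}^n\alpha_k+n\cdot 60^\circ=360^\circ$, which is exactly \eqref{eq:turning}.

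The one point requiring care is the sign bookkeeping: one must check that the convex/concave convention for the $\alpha_k$ matches the direction of rotation of the tangent for the chosen orientation, and that every vertex really is a left turn — this is where the hypothesis that all angles equal $120^\circ$ is used. As sanity checks I would verify the formula on a regular hexagon ($n=6$, all $\alpha_k=0$) and on a Reuleaux triangle ($n=3$, three convex $60^\circ$ arcs, $\sum\alpha_k=180^\circ$). An alternative, Umlaufsatz-free route is to replace each arc by its chord, which turns $C$ into a rectilinear polygon with the same $n$ vertices but interior angle $120^\circ-\frac{1}{2}(\alpha_k+\alpha_{k+1})$ at the $k$-th vertex; summing the elementary identity $\sum(\text{interior angles})=(n-2)\cdot 180^\circ$ for this polygon yields the same conclusion, but one must then also argue that the straightened polygon remains simple, so I would favor the direct Umlaufsatz argument.
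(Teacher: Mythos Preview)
Your argument is correct and essentially identical to the paper's: the paper also tracks the total rotation of a unit vector (the normal rather than the tangent, but this is the same thing) around $\partial C$, uses that it equals $360^\circ$, and splits it into the $60^\circ$ jump at each vertex plus $\alpha_k$ along each edge. Your explicit invocation of the Umlaufsatz, sign bookkeeping, and sanity checks are a bit more detailed than what the paper writes, but the proof is the same.
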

    \begin{proof}
    Start from a vertex of $C$ and follow the normal vector to the edges around $C$ in 
    the counter-clockwise direction.
    On each vertex the normal vector rotates counterclockwise of an angle of 60 degrees.
    Along each edge, the normal vector rotates counterclockwise of the angle $\alpha_k$ 
    corresponding to the edges. 
    After making a complete round trip the vector has made a rotation of 360 degrees. 
    Hence we have 
    \[
     \sum_{k=1}^n (\alpha_k + 60^\circ) = 360^\circ
    \]
    and \eqref{eq:turning} follows.
    \end{proof}

\begin{lemma}[area and length of Reuleaux triangle]
\label{lm:reauleaux}
    Let $E\subset \R^2$ be a Reuleaux triangle i.e., a curvilinear polygon composed of three arcs joining with equal angles of 120 degrees in the vertices. 
    All the edges have the same radius of curvature $r>0$
    and we have
    \begin{align}
        \abs{E} &= \frac{\pi-\sqrt 3}{2} r^2, \qquad
        r = \frac{\sqrt{2\abs{E}}}{\sqrt{\pi -\sqrt 3}},\\
        \Per(E) &= \pi r.
    \end{align}
    Moreover, the distance of each vertex  to the center of the triangle is given by $r/\sqrt 3$. 
\end{lemma}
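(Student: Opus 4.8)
The plan is to first establish that $E$ must be the classical Reuleaux triangle erected on an equilateral triangle of side $r$, and then to read off the three identities by elementary area and arclength computations.

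\emph{Step 1 (rigidity of the shape).} Label the vertices $V_1,V_2,V_3$ and let the arc joining $V_i$ to $V_{i+1}$ (indices mod $3$) span an angle $\alpha_i>0$ at its center $O_i$. Since each arc has radius $r$, the triangle $O_iV_iV_{i+1}$ is isosceles with legs $r$ and apex $\alpha_i$, so the chord $V_iV_{i+1}$ has length $2r\sin(\alpha_i/2)$. Lemma~\ref{lm:turning} with $n=3$ gives $\alpha_1+\alpha_2+\alpha_3=180^\circ$. The tangent--chord angle at each endpoint of arc $i$ equals $\alpha_i/2$, so the $120^\circ$ condition at $V_i$ reads $\beta_i+\tfrac12(\alpha_{i-1}+\alpha_i)=120^\circ$, where $\beta_i$ is the interior angle at $V_i$ of the rectilinear triangle $V_1V_2V_3$; combined with $\sum_i\alpha_i=180^\circ$ this rearranges to $\beta_i=30^\circ+\tfrac12\alpha_{i+1}$. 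Applying the law of sines to $V_1V_2V_3$ (the side of length $2r\sin(\alpha_i/2)$ faces the angle $30^\circ+\tfrac12\alpha_i$) then shows that $t\mapsto \sin(30^\circ+t)/\sin t=\tfrac12\cot t+\tfrac{\sqrt3}{2}$ takes a common value at $t=\alpha_1/2,\alpha_2/2,\alpha_3/2$; by strict monotonicity of this map on $(0^\circ,90^\circ)$ we get $\alpha_1=\alpha_2=\alpha_3=60^\circ$, whence $\beta_i=60^\circ$, $\abs{V_iV_{i+1}}=r$, and $O_i=V_{i+2}$. Thus $E$ is the intersection of the three disks of radius $r$ centered at the vertices of an equilateral triangle of side $r$ — the classical Reuleaux triangle. (If ``equal curved edges'' is taken as part of the definition, equality of the $\alpha_i$ is immediate and Lemma~\ref{lm:turning} alone gives the common value $60^\circ$.)

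\emph{Step 2 (the three identities).} Decompose $E$ as the central equilateral triangle of side $r$ together with three congruent circular segments; equivalently, the three $60^\circ$ sectors of radius $r$ cover $E$ with the central triangle triple-counted, so $\abs{E}$ is the sum of their areas minus twice that triangle's area. Either way $\abs{E}=3\cdot\tfrac{\pi r^2}{6}-2\cdot\tfrac{\sqrt3}{4}r^2=\tfrac{\pi-\sqrt3}{2}r^2$, and solving for $r>0$ gives $r=\sqrt{2\abs{E}}/\sqrt{\pi-\sqrt3}$. The perimeter is the total length of the three $60^\circ$ arcs, $3\cdot r\cdot\tfrac{\pi}{3}=\pi r$. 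Finally, by the threefold symmetry established in Step~1 the center of $E$ is the centroid of $V_1V_2V_3$, whose distance to each vertex is the circumradius $r/\sqrt3$ of an equilateral triangle of side $r$.

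I expect the only genuinely non-routine point to be the rigidity in Step~1 — that the prescribed angles together with the equal-curvature hypothesis leave no freedom in the shape of $E$; once the equilateral-triangle picture is in force, Step~2 is pure mensuration. Should the ambient notion of Reuleaux triangle already incorporate the threefold symmetry, the argument collapses to Step~2 together with a one-line use of Lemma~\ref{lm:turning}.
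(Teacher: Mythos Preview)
Your proof is correct. The paper's own proof is literally the single line ``These are straightforward computations,'' so your Step~2 is exactly what the paper leaves to the reader, carried out in full.

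Your Step~1 goes beyond what the paper does: you actually \emph{prove} that a convex curvilinear triangle with three arcs of common radius $r$ meeting at $120^\circ$ must be the classical Reuleaux triangle on an equilateral triangle of side $r$. The paper simply takes this for granted (elsewhere it defines a Reuleaux triangle as having ``equal curved edges,'' so the threefold symmetry is effectively built in). Your law-of-sines argument via the monotonicity of $t\mapsto\tfrac12\cot t+\tfrac{\sqrt3}{2}$ is a clean way to pin down $\alpha_1=\alpha_2=\alpha_3=60^\circ$ from the equal-radius hypothesis alone, and is a genuine addition. Your closing parenthetical correctly anticipates that under the paper's working definition the whole of Step~1 collapses to a single invocation of Lemma~\ref{lm:turning}.
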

\begin{proof}
These are straightforward computations.
\end{proof}

\begin{lemma}[length of a Steiner tripod]
Let $A=(0,0)$, $B=(u,0)$, and $C=(w,v)$ be three points in $\R^2$
and let $S$ be a point such that the three 
segment $AS$, $BS$, $CS$ define equal angles of 120 degrees in $S$.
Then 
\[
    \abs{A-S}+\abs{B-S}+\abs{C-S}
    = \sqrt{ \enclose{w-\frac u 2}^2 +\enclose{v+\frac{\sqrt{3}}{2} u}^2}.
\]
\end{lemma}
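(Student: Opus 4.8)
The plan is to compute the length of the Steiner tripod (the Fermat point sum) for the triangle with vertices $A=(0,0)$, $B=(u,0)$, $C=(w,v)$ by exploiting the classical fact that this sum equals the distance between the apices of equilateral triangles erected on opposite sides. More precisely, I recall that if $S$ is the Fermat point of a triangle $\triangle ABC$ (so the three segments $SA$, $SB$, $SC$ meet at $120^\circ$), and if we erect an equilateral triangle on one side, say on $AB$ externally, with third vertex $C'$, then $|SA|+|SB|+|SC| = |CC'|$. This is because a $60^\circ$ rotation about one vertex maps the broken path $A$–$S$–$C$ (say) onto a straight segment once the $120^\circ$ conditions are in force, collinearizing the three pieces $SA$, $SB$, $SC$ into the single segment $CC'$.

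First I would fix the equilateral triangle erected externally on the side $AB$, which lies on the $x$-axis. Since $A=(0,0)$ and $B=(u,0)$, its third vertex is $C' = \enclose{\frac u 2, -\frac{\sqrt 3}{2} u}$ (choosing the apex on the opposite side of the line $AB$ from $C$, assuming $v>0$; the other sign choice just gives the reflected configuration). Then I would invoke the Fermat-point identity to conclude
\[
  |A-S| + |B-S| + |C-S| = |C - C'| = \sqrt{\enclose{w - \tfrac u 2}^2 + \enclose{v + \tfrac{\sqrt 3}{2} u}^2},
\]
which is exactly the claimed formula. Alternatively, and perhaps more in the spirit of the surrounding appendix which favours direct trigonometric computation, I would parametrize $S$ by the three angles and unit directions of the segments $SA, SB, SC$, which are forced to be three unit vectors at mutual angles of $120^\circ$; writing $A - S = \rho_A e_A$ etc. and using that $e_A + e_B + e_C = 0$ (three unit vectors at $120^\circ$ sum to zero) together with the constraint that $A, B, C$ are the prescribed points, one solves a small linear system for $\rho_A, \rho_B, \rho_C \ge 0$ and sums them; the algebra collapses to the stated square root.

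The only genuine subtlety is a sign/orientation issue: the formula as stated is symmetric under $u \mapsto -u$ combined with reflection, and it implicitly assumes the configuration is non-degenerate, i.e.\ that a genuine interior Fermat point with three $120^\circ$ angles exists (equivalently, all angles of $\triangle ABC$ are strictly less than $120^\circ$). I would note that in all the applications within this paper the tripod arises from an isoperimetric tiling where the $120^\circ$ condition is built in by regularity, so this hypothesis is automatically satisfied; hence the identity applies verbatim. The remaining steps — verifying the coordinates of $C'$ and expanding the distance — are routine and require no further comment.
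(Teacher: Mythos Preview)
Your proof is correct and follows essentially the same approach as the paper: both invoke the classical Torricelli/Melzak construction, identifying the tripod length with the distance $|C-C'|$ where $C'=\enclose{\frac u 2,-\frac{\sqrt 3}{2}u}$ is the apex of the equilateral triangle erected on $AB$ opposite to $C$. The paper simply cites Melzak's construction without further justification, whereas you add the $60^\circ$-rotation explanation and a remark on orientation, so if anything your version is more complete.
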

\begin{proof}
By Melzak's construction \cite{Mel61}
the length of the tripod is equal to the distance $\abs{C-M}$ 
where $M=\enclose{\frac u 2,-\frac{\sqrt{3}}{2}u}$ is the third vertex of 
the equilateral triangle with vertices in $A$ and $B$, opposite to $C$.
The result follows.
\end{proof}

\begin{figure}
\begin{tikzpicture}[scale=0.5]
    \draw[thick](3,-1) arc (-10:10:5.8) -- (3,1) arc (70:110:8.77) -- (-3,1) arc (170:190:5.77) -- (-3,-1) arc (-110:-70:8.77);
    \draw[dashed] (2.98,-0.98) -- (2.98,0.98) -- (-2.98,0.98) -- (-
    2.98,-0.98) -- cycle;

    \draw[thin](3.01,0.7) -- (3.4,1);
    \node at (3.6,1.17) {$\theta$};

    \draw[thin](2.7,1.04) -- (2.5,1.7);
    \node at (2.5,2) {$\theta'$};    
\end{tikzpicture}
    \caption{A quadrangular curvilinear polygon with circular edges.}
    \label{fig:rectangle}
\end{figure}
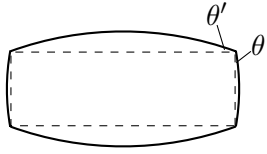
        
In the next lemma, we find the compatibility condition for a quadrangular curvilinear polygon to exist, for a given volume and a given radius of the circular edges.

\begin{lemma}[quadrangular curvilinear polygon]
    \label{lm:quadrangular}
    Let $E$ be a curvilinear quadrilateral composed of four convex circular arcs of equal 
    curvature, meeting with angles of 120 degrees
    (see Figure~\ref{fig:rectangle}).
    Then $E$ has the symmetries of a rectangle: the axis of each edge is a symmetry axis 
    for the whole curvilinear polygon $E$. 
    In particular, opposite edges have equal lengths.
    Moreover, if $r$ is the common curvature radius of the edges,
    $\theta$ is the angle between one of the smaller edges and its chord,
    $2a$ is the chord of the largest arc
    and $2b$ is the chord of the smaller arc,
    then
    we have
    \begin{align*}
        \theta &\in (0,\pi/12],\quad
        b = r\sin \theta, \quad 
        a = r \sin\enclose{\frac \pi 6 - \theta},\\
        \abs{E} &= r^2 \Enclose{1-\sqrt 3 + \frac \pi 3 - 2 \sin^2\enclose{\theta-\frac \pi{12}}},\\
        \Per(E) &= \frac 2 3 \pi r,\\
         r &= 2\sqrt{a^2+\sqrt 3 ab + b^2}.
    \end{align*}

    In particular, for $\theta= \frac{\pi}{12}$, the quadrangular curvilinear polygon is a curvilinear 
    square.
\end{lemma}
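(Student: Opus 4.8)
The plan is to exploit the rigidity forced by the 120-degree condition together with the constant curvature. First I would establish the symmetry claim. Consider the curvilinear quadrilateral $E$ with its four convex circular arcs of common radius $r$ meeting at four vertices with interior angles $120^\circ$. Reflecting across the perpendicular bisector of any one edge maps that edge to itself and, because all arcs have the same curvature and all angles are equal, the reflected figure again satisfies the same local conditions at every vertex; by the uniqueness of the continuation of a constant-curvature arc from a vertex with prescribed angle, the reflected figure must coincide with $E$. Iterating over the four edges shows $E$ has the dihedral symmetry of a rectangle, hence opposite edges are congruent and there are really only two distinct edge lengths, the two longer arcs with chord $2a$ and the two shorter arcs with chord $2b$.

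Next I would set up the trigonometry of a single vertex. At a vertex two arcs meet at $120^\circ$; by symmetry the chord of the larger arc makes angle $\theta$ with the smaller arc's tangent configuration, and a direct angle chase at the vertex (using that the angle between a chord and the arc equals the half-angle subtended) gives that the larger arc subtends a half-angle $\frac\pi6-\theta$ at its center while the smaller subtends $\theta$; this yields $b=r\sin\theta$ and $a=r\sin(\frac\pi6-\theta)$, with the constraint $\theta\in(0,\pi/12]$ coming from requiring $a\ge b\ge 0$ together with convexity. The perimeter is then immediate: each of the four arcs subtends a full angle of $2\theta$ or $2(\frac\pi6-\theta)$ at its center, so $\Per(E)=2r\cdot 2\theta+2r\cdot 2(\frac\pi6-\theta)=\frac23\pi r$, independently of $\theta$.

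For the area I would decompose $E$: take the straight quadrilateral $Q$ whose vertices are the vertices of $E$ (a rectangle of side lengths $2a$ and $2b$ by the symmetry), then add the four circular segments bounded by each arc and its chord. The rectangle contributes $4ab$; each circular segment of a chord $2a$ and radius $r$ contributes $r^2(\phi-\sin\phi\cos\phi)$ type terms with half-angle $\phi=\frac\pi6-\theta$, and similarly for the chord $2b$ with $\phi=\theta$. Summing the four segment areas and the rectangle, then simplifying using $a=r\sin(\frac\pi6-\theta)$, $b=r\sin\theta$ and the identity $\sin^2(\frac\pi6-\theta)+\sin^2\theta = 1 - \cos(\frac\pi3-2\theta)\cdot(\text{something})$, collapses to the stated closed form $\abs{E}=r^2[1-\sqrt3+\frac\pi3-2\sin^2(\theta-\frac\pi{12})]$. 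Finally, the relation $r=2\sqrt{a^2+\sqrt3\,ab+b^2}$ is obtained by eliminating $\theta$: from $a=r\sin(\frac\pi6-\theta)$ and $b=r\sin\theta$ one computes $a^2+\sqrt3\,ab+b^2$ directly and checks it equals $\frac{r^2}{4}$ using the product-to-sum formulas. The case $\theta=\frac\pi{12}$ then gives $a=b$, i.e.\ the curvilinear square.

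The routine-but-delicate part is the final algebraic simplification of the area: there are several equivalent trigonometric forms and care is needed to land exactly on $-2\sin^2(\theta-\frac\pi{12})$ rather than an unsimplified combination of $\cos(\frac\pi3-2\theta)$ and constants. I expect the main conceptual obstacle, though, to be the angle chase at the vertex: one must correctly relate the $120^\circ$ interior angle of $E$ to the inscribed half-angles of the two arcs and keep track of orientation so that the signs come out right; once that is pinned down, everything else is bookkeeping with $a=r\sin(\frac\pi6-\theta)$, $b=r\sin\theta$.
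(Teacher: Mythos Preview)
Your computational steps (the chord relations $a=r\sin(\frac\pi6-\theta)$, $b=r\sin\theta$; the perimeter; the area via rectangle plus four segments; and the elimination giving $r=2\sqrt{a^2+\sqrt3\,ab+b^2}$) are correct and follow the same decomposition as the paper. For the $r$ formula the paper instead locates the arc centres geometrically (building equilateral triangles on the outside of the straight rectangle and reading off $r^2=(2a+\sqrt3 b)^2+b^2$), but your algebraic elimination via $a^2+\sqrt3 ab+b^2=\tfrac{r^2}{4}$ is an equally valid route.

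The genuine gap is in your symmetry argument. You assert that the reflection $E'$ of $E$ across the perpendicular bisector of an edge $e$ must coincide with $E$ ``by the uniqueness of the continuation of a constant-curvature arc from a vertex with prescribed angle''. That uniqueness determines only the \emph{circles} on which the two edges adjacent to $e$ lie; it does not determine where on those circles the next vertices sit, i.e.\ it does not fix the lengths of those edges. If a non-symmetric $E$ existed, its reflection $E'$ would be a second curvilinear quadrilateral with the same edge $e$ and the same adjacent circles but different arc lengths, so your ``uniqueness of continuation'' would simply be false in that situation. In other words, the step you invoke is equivalent to the symmetry you are trying to prove.

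The paper closes this gap by a different mechanism: it first excludes the degenerate case in which a pair of opposite arcs is concentric (that would force all four arcs to be concentric, making $E$ a disk and violating the $120^\circ$ condition), and then, for opposite arcs with distinct centres, it appeals to \cite[Lemma~5.1]{Wichiramala}, which gives precisely that the two circles adjacent to a fixed edge $e$ are mirror images in the axis of $e$. That external lemma is doing the rigidity work that your reflection argument leaves unjustified. If you want a self-contained argument, one option is to note that two radius-$r$ circles meeting at interior angle $120^\circ$ have their centres at distance exactly $r$; this pins down the centre of the fourth circle relative to the (already symmetric) pair of adjacent circles and forces it onto the axis of $e$. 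You would still need to argue that among the finitely many resulting candidate positions only the symmetric one yields a convex quadrilateral, but that is a finite check rather than an appeal to an unproved uniqueness principle.
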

\begin{proof}
\emph{Step 1.}  First of all, we claim that the quadrilateral must have the symmetries of a rectangle. 
If opposite arcs of the quadrilateral have different centers, 
then the claim easily follows from \cite[Lemma 5.1]{Wichiramala} (just notice that if we fix an edge $e$ the two adjacent edges are contained in two circles which must be symmetric with respect to the axis of $e$).
So it is enough to prove that opposite arcs cannot have the same center. 
Suppose by contradiction that they have the same center. Then the other pair of arcs must have the same center as well
(simply because they have the same curvature radius, and the curvilinear quadrilateral is assumed to be convex). 
But then the region $E$ would be a disk, contradicting the assumption that the vertices have 120 degrees. 

\emph{Step 2.} 
First, notice that $\theta$ must satisfy $\theta \in \left(0,\frac \pi {12}\right)$, since circular edges meet at a common vertex with an angle equal to $\frac 2 3 \pi$.

The fact that $\Per(E)=\frac 2 3 \pi r$ easily follows by Lemma \ref{lm:turning}, noticing that the turning angle is precisely $\frac 2 3 \pi$.  
Equivalently, this follows from the following computation 
$\Per(E)= 4\theta r + 4\enclose{\frac \pi 6 - \theta}r = \frac 2 3 \pi r$. 
The length of the chords (edges of the corresponding straight rectangle)
are respectively
$2r\sin\enclose{\theta - \frac \pi 6}$
and $2r\sin\theta$.
Hence the area of the rectangle with straight edges is:
\[
    V = 4r^2\sin\theta\sin\enclose{\frac \pi 6-\theta}
\]
while the areas of the two different circle sectors between the arcs and the chords are
$v(\theta)$ and $v\enclose{\frac \pi 6 - \theta}$ with
\[
    v(\theta) = \theta r^2 - \frac {r^2} 2 \sin(2\theta).
\]
hence
\begin{equation}
\begin{aligned}
    \abs{E} &= V + 2v(\theta) + 2 v\enclose{\frac \pi 6 - \theta} \\
    & = r^2 \Enclose{4\sin \theta \sin\enclose{\frac \pi 6-\theta}
    + 2 \theta - \sin(2\theta) +  \frac \pi 3 - 2\theta - \sin\enclose{\frac \pi 3 - 2\theta}}.
\end{aligned}
\label{eq:compatibility theta polygon}
\end{equation}
By trigonometric formulas, the sought identity for $\abs{E}$  follows. Finally, if $\theta = \frac \pi {12}$, then the inner rectangle is a square. 

\emph{Step 3.} Let us compute the curvature radius $r$ and conclude the proof.
We claim that the centre $P$ of the arc passing through the points $A=(a,b)$ 
and $(a,-b)$ has coordinates $P=(-a-\sqrt 3 b,0)$, while 
the center $Q$ of the arc passing through $(a,b)$ and $(a,-b)$
has coordinates $Q=(0,-b-\sqrt 3 a)$.
In fact it is a nice exercise in planar geometry to prove that 
if we construct two equilateral triangles on the exterior of two adjacent 
edges of a straight rectangle, the two constructed vertices ($P$ and $Q$ in our case) together with 
the opposite vertex of the rectangle ($A=(a,b)$ in our case), 
form an equilateral triangle.
This means that the arcs centred in these two points and passing through the point $A$ have the same radius and define an angle of 120 degrees in $A$.
It is not difficult to show that the points $P$, $Q$ are uniquely defined 
by these properties, obtaining the claim.
Hence we have 
\[
    r^2 = \abs{A-P}^2 = (a + a + \sqrt 3 b)^2 + (b-0)^2 
    = 4 a^2 + 3b^2 +4\sqrt 3 ab + b^2 
    = 4 a^2 + 4\sqrt 3 a b + 4 b^2.
\]
\end{proof}
\begin{lemma}[planar periodic graphs]\label{lm:eulero}
    Let $G$ be a simple planar graph whose set of vertices is $\Z^2 \subset \R^2$ and whose set 
    of edges $E$ is invariant under the action of $\Z^2$.
    Then each vertex of $G$ has order (number of edges) $d\le 6$.
\end{lemma}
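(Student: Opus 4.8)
The plan is to reduce the claim to a counting argument of Euler's-formula type, exploiting the $\Z^2$-periodicity to pass from the (infinite) periodic graph to a finite quotient graph on the torus $\T^2 = \R^2/\Z^2$. First I would observe that since the edge set $E$ is invariant under $\Z^2$ and $G$ is simple (no loops, no multiple edges) and planar, the quotient $\bar G = G/\Z^2$ is a finite graph embedded in the torus $\T^2$, with a single vertex class (the image of $\Z^2$) but possibly several edges and several faces. Let $V$, $\bar E$, $F$ denote the number of vertices, edges, and faces of $\bar G$ as a cell complex on the torus; then $V = 1$. The key point is that $d$, the order of a vertex of $G$, equals $2\bar E$ when counted in the quotient (each quotient edge, being a loop at the unique vertex, contributes $2$ to the degree), so it suffices to bound $\bar E$.

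Next I would apply the Euler relation for the torus, $V - \bar E + F = 0$, which gives $F = \bar E - 1$. Since $\bar G$ is simple as a subgraph of a planar graph, and more importantly arises as the adjacency graph of a tile with $120^\circ$ vertices, every face of the embedded graph $\bar G$ on the torus has at least $3$ sides (a face with fewer would force a multiple edge or a loop bounding a disk, contradicting simplicity and the planarity/embedding hypotheses). The standard double-counting of incidences between faces and edges, $2\bar E = \sum_{\text{faces}} (\text{number of sides}) \ge 3 F$, then yields $2\bar E \ge 3(\bar E - 1)$, i.e.\ $\bar E \le 3$. Therefore $d = 2\bar E \le 6$, which is exactly the assertion.

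The main obstacle I anticipate is the careful justification that the quotient complex $\bar G$ on the torus is genuinely a CW/cell decomposition to which the Euler formula $V - \bar E + F = 0$ applies, and in particular that the faces are open disks with boundary cycles of length at least $3$. This requires knowing that the planar periodic graph $G$ divides $\R^2$ into bounded faces (so that, after quotienting, no face of $\bar G$ is an annulus or wraps a noncontractible loop), and this is where the hypothesis that $G$ is a \emph{planar} graph with $\Z^2$-periodic edge set — rather than an arbitrary abstract graph — is essential. One clean way around the delicate point is to argue directly in $\R^2$: fix a large square $Q_n = [0,n]^2$, count vertices ($\sim n^2$), edges ($\sim \tfrac{d}{2} n^2$), and bounded faces (each with $\ge 3$ sides, so $\le \tfrac{1}{3}\cdot 2 \cdot \tfrac{d}{2} n^2$) inside $Q_n$, plug into Euler's formula $V - E + F = 1$ for the planar piece up to boundary corrections of order $n$, and let $n \to \infty$; the leading-order terms force $1 - \tfrac{d}{2} + \tfrac{d}{3} \ge 0$, hence $d \le 6$. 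Either route is routine once the face-degree bound is in hand, so I would present whichever of the two (torus quotient or exhaustion) the surrounding text makes cleanest, and I expect the write-up to be short.
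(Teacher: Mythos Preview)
Your exhaustion argument is exactly what the paper does: it takes a large ball $B_R$, lets $V_R,E_R,F_R$ be the vertices, edges and facets of the finite planar piece, combines Euler's formula $V_R-E_R+F_R=2$ with $3F_R\le 2E_R$ and $dV_R=2E_R+o(E_R)$, and sends $R\to\infty$ to get $\frac{2}{d}-\frac{1}{3}\ge 0$. Your square $Q_n$ in place of $B_R$ is cosmetic. The torus-quotient route you present first is a genuine alternative and is slicker once it works---one vertex, Euler characteristic zero, done---but, as you correctly flag, it needs the periodic planar embedding to descend to a CW decomposition of $\T^2$ with disk faces, which is an extra step the exhaustion argument sidesteps entirely. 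The paper opts for the exhaustion, presumably for exactly this reason.
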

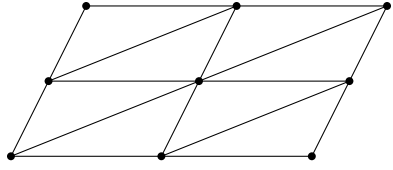
\begin{figure}
\begin{tikzpicture}
    \fill (0,0) circle(1.5pt);
    \fill (2,0) circle(1.5pt);
    \fill (4,0) circle(1.5pt);
    \fill (0.5,1) circle(1.5pt);
    \fill (2.5,1) circle(1.5pt);
    \fill (4.5,1) circle(1.5pt);
    \fill (1,2) circle(1.5pt);
    \fill (3,2) circle(1.5pt);
    \fill (5,2) circle(1.5pt);

    \draw (0,0) -- (4,0) -- (5,2) -- (1,2) -- (0,0);
    \draw (0.5,1) -- (4.5,1);
    \draw (2,0) -- (3,2);
    \draw (0,0) -- (5,2);
    \draw (2,0) -- (4.5,1);
    \draw (0.5,1) -- (3,2);
\end{tikzpicture}
\caption{A planar periodic graph can have vertices with a maximum order of 6.}
\end{figure}
\begin{proof}
Consider the subgraph $G_R$ of $G$ composed by all facets of $G$ contained in a large ball 
of radius $R$.
The graph $G_R$ has $V_R$ vertices, $E_R$ edges, and $F_R$ facets.
Being planar, Euler formula gives: $V_R-E_R+F_R=2$.
Each facet of $G_R$ has at least $3$ edges, and each edge is shared by at most two facets:
$3 F_R \le 2 E_R$. 
Each vertex of $G$ has exactly $d$ edges and each edge joins exactly two vertices
so that $d V_R = 2 E_R+o(E_R)$ where $o(E_R)$ accounts for the edges of $G$ which are joining 
a vertex of $G_R$ with a vertex of $G \setminus G_R$. 
So, Euler formula gives
\[
2 
= V_R-E_R+F_R 
\le \frac 2 d E_R + o(E_R) - E_R + \frac 2 3 E_R
= \enclose{\frac 2 d - \frac 1 3} E_R + o(E_R). 
\]
Letting $R\to +\infty$ we must have $\frac 2 d - \frac 1 3 \ge 0$, which means $d\le 6$.
\end{proof}

\medskip

\noindent\textbf{Acknowledgments}. 
All authors are members of INDAM-GNAMPA. 
The authors acknowledge support from the MIUR Excellence Department Project awarded to the Department of Mathematics, University of Pisa, CUP I57G22000700001. 
M.N. acknowledges partial support from the PRIN 2022 project 2022E9CF89, and the PRIN 2022 PNRR project P2022WJW9H. 
E.P. acknowledges support from the PRIN 2022 project 2022PJ9EFL, and from the project PRA 2022 14 GeoDom (Università di Pisa). The authors thank the anonymous referee for valuable comments that significantly improved the manuscript.

\bibliographystyle{siam}

\end{document}